\newtheorem{theorem}{Theorem}[section]
\newtheorem*{theorem*}{Theorem}
\newtheorem{lem}{Lemma}[section]
\newtheorem{corollary}{Corollary}[section]
\newtheorem{proposition}[theorem]{Proposition}
\newcommand{\setR}{\mathbb{R}}
\newcommand{\setN}{\mathbb{N}}
\newcommand{\setC}{\mathbb{C}}
\newcommand{\setZ}{\mathbb{Z}}
\newcommand{\XXint}[3]{{\setbox0=\hbox{$#1{#2#3}{\int}$}
      \vcenter{\hbox{$#2#3$}}\kern-.5\wd0}}
\newcommand{\settmp}[2]{#1\{{#2}#1\}}
\newcommand{\set}[1]{\settmp{}{#1}}
\newcommand{\Bigset}[1]{\settmp{\Big}{#1}}
\newcommand{\normtmp}[2]{#1\lVert{#2}#1\rVert}
\newcommand{\norm}[1]{\normtmp{}{#1}}
\newcommand{\abstmp}[2]{#1\lvert{#2}#1\rvert}
\newcommand{\abs}[1]{\abstmp{}{#1}}
\title{\sc Time-periodic N{\'e}el wall motions\
\thanks{{\bf 2010 Mathematics Subject Classification}: 78A99; 35Q60; 35B10.\newline
{\bf Key words}: Micromagnetism, Landau-Lifshitz-Gilbert equation, thin ferromagnetic films, N{\'e}el walls, time-periodic solutions, continuation method, spectral analysis.} }
\author{\sc  Alexander Huber}
\date{}
\begin{document}

\maketitle

\begin{abstract}
In thin ferromagnetic films, the predominance of the magnetic shape anisotropy leads to in-plane magnetizations. The simplest 
domain wall in this geometry is the one-dimensional N{\'e}el wall that connects two magnetizations of opposite sign by a planar $180^\circ$ rotation. In this paper, we perturb the static N{\'e}el wall profile in order to construct time-periodic N{\'e}el wall motions governed by to the Landau-Lifshitz-Gilbert equation. Our construction works within a certain parameter regime and requires the restriction to external magnetic fields with small amplitudes and suitable time averages.
\end{abstract}

\section{Introduction}
The theory of micromagnetism deals with the multiple magnetic phenomena observed in ferromagnetic materials (see for example Brown \cite{brown}, Hubert and Sch\"afer \cite{HS}) like the formation of so-called magnetic domains (regions where the magnetization is almost constant) and the appearance of so-called domain walls (thin transition layers separating the domains). It is a well known fact that the interaction and characteristic of the magnetic structures heavily depend on a variety of parameters (e.g. size and shape of the ferromagnetic sample, material properties, ...). Due to the resulting complexity of the general micromagnetic problem, the mathematical theory of micromagnetism is aimed to provide appropriate approximations for various parameter regimes (see DeSimone, Kohn, M\"uller, and Otto \cite{dkmo} and references therein).

While most of the known mathematical theory has focused on the static case, we studied in \cite{alex_small} (see also \cite{alex_diss}) qualitative properties of time-depending magnetization patterns  modeled by the Landau-Lifshitz-Gilbert equation (LLG).
%driven by an external magnetic field. 
More precisely, we showed the existence of time-periodic solutions for the full three-dimensional LLG in the regime of soft and small ferromagnetic particles satisfying a certain shape condition. The paper at hand is concerned with time-periodic motions of one-dimensional domain walls which separate two domains with magnetizations of opposite directions by a one-dimensional $180^\text{o}$-transition. These walls are parts of the building blocks for more complicated wall structures and can be classified in the following way:
\begin{itemize}
\item The Bloch wall rotates perpendicular to the 
transition axis. It is observed in bulk materials.
\end{itemize}
\begin{itemize}
\item The N{\'e}el wall rotates in the plane spanned by 
the transition axis and the end states. It is observed in thin films.
\end{itemize}
For one-dimensional domain walls modeled by $m=(m_1,m_2,m_3):\setR \to S^2$, it is possible to derive an energy functional from the full three-dimensional micromagnetic energy by means of dimension reduction. Such a reduction was carried out by Aharoni \cite{aharoni1}, \cite{aharoni2} (see also Garc{\'{\i}}a-Cervera \cite{Garcia} and Melcher \cite{Melcher}), and the resulting energy functional for a ferromagnetic layer of thickness $\delta>0$  reads as follows:
\begin{align*}
  E(m) = d^2 \int_\setR \abs{m'}^2 \, dx + Q \int_\setR (m_1^2 + m_3^2) \, dx + \int_\setR \mathcal{S}_\delta [m_1] m_1 \, dx + \int_\setR ( m_3^2 - \mathcal{S}_\delta [m_3] m_3 ) \, dx
 \, .
\end{align*}
On the right hand side, the first two terms are called exchange energy and anisotropy energy, respectively. 
The exchange energy explains the tendency towards parallel alignment, where the positive material constant \nolinebreak $d$ is called exchange length or Bloch line width. Here we assume that the material has a single easy axis along $e_2=(0,1,0)$ leading to the anisotropy energy with positive quality factor $Q$. Moreover, the magnetization $m$ induces a magnetic field -- the so-called stray field -- causing an energy contribution given by the last two terms on the right hand side, where $ \mathcal{S}_\delta$ is the reduced stray field operator, a Fourier multiplication operator defined  by
\begin{align*}
 f \mapsto \mathcal{S}_\delta[f] = \mathcal{F}^{-1}\big( \sigma_\delta \, \widehat{f}\,\big)
\end{align*}
with real-valued, nonnegative, and bounded symbol
\begin{align*}
 \sigma_\delta (\xi) = 1 - \frac{1- e^{- \delta \abs{\xi}}}{\delta \abs{\xi}} \,.
\end{align*}
 The assumption that $m$ represents a domain wall, that is $m$ connects end states of opposite directions, is reflected by the additional requirement
\begin{align*}
 \lim_{x \to \pm \infty} m(x) = \pm e_2 \,.
\end{align*}
In the model introduced above, the Bloch and N{\'e}el walls correspond to minimizers of $E$ in the cases $m_1\equiv0$ and $m_3\equiv0$, respectively. In an infinitely extended layer, that is $\delta = \infty$, the Bloch wall path completely avoids the occurrence of magnetic volume charges, and as a result, the stray field energy is equal to zero and therefore minimal. For this reason, the energy functional reduces to two terms, and it is possible to compute the Bloch wall profile explicitly. Indeed, with the help of a scaling argument one can restrict oneself to the computation of a single reference profile, and it turns out that the Bloch wall exhibits an exponential decay beyond a transition zone of order unity (see \cite{HS}). 
The  N{\'e}el wall is mathematically more delicate due to the nonvanishing 
contribution of the stray field energy and the resulting dependence on multiple scales. In soft and thin films, the characteristic properties of the N{\'e}el wall are the very long logarithmic tail of the transition profile and logarithmic energy scaling (see \cite{dkmo}, \cite{Garcia}, \cite{Melcher}, and \cite{Melcher2}).

For one-dimensional domain walls $m=m(t,x): \setR \times \setR \to S^2$, LLG with respect to a time-dependent external magnetic field $h_\text{ext}$ is given by
\begin{align*}
 m_t = \alpha \, m \times H_{\text{eff}} - m \times ( m \times H_{\text{eff}}) \, , \quad 
\abs{m}=1 \,, \quad \lim_{x \to \pm \infty} m(\cdot,x) = \pm e_2 \,,
\end{align*}
where
\begin{align*}
 H_{\text{eff}} = d^2 m'' - Q
 (m_1,0,m_3)^T
- (\mathcal{S}_\delta [m_1],0,m_3 - \mathcal{S}_\delta [m_3])^T 
+(0,h_\text{ext},0)^T
%  \left(
% \begin{array}{c}
%  m_1 \\ 0 \\ m_3
% \end{array}
% \right)
% - \left(
% \begin{array}{c}
%  \mathcal{S}_\delta [m_1] \\ 0 \\ m_3 - \mathcal{S}_\delta [m_3] 
% \end{array}
% \right)
% +
% \left(
% \begin{array}{c}
%  0 \\ h_\text{ext} \\ 0
% \end{array}
%  \right)
\end{align*}
is the effective magnetic field and \textquotedblleft$\times$\textquotedblright{} denotes the usual cross product in $\setR^3$ (we assume that the external magnetic field is parallel to $e_2$). The so-called ``gyromagnetic'' term $\alpha \, m \times H_{\text{eff}}$ describes 
a precession around $H_{\text{eff}}$, whereas the ``damping'' term $- m  \times \big( m \times H_{\text{eff}}\big) $ tries to 
align $m$ with $H_{\text{eff}}$.

The assumption of in-plane magnetizations in the thin film geometry  is incompatible with the dynamics as described by LLG since  gyromagnetic precession generates an energetically unfavorable out-of-plane component.
%We expect fast but small oscillations of the out-of-plane component. 
Certain reduced models for the in-plane components have been considered in \cite{GarciaE},
\cite{KohnSlastikov}, and \cite{CMO} as the film thickness goes to zero. In  \cite{GarciaE} and \cite{KohnSlastikov}, the so-called Gilbert damping factor is held fixed of order one leading to an overdamped limit of LLG for the in-plane components. In \cite{CMO} a different parameter regime is treated. There, the Gilbert damping factor is 
assumed to be comparable to a certain relative thickness, and the resulting  LLG is a damped geometric 
wave equation for the in-plane magnetization components. 

In the presence of a constant external magnetic field, one expects the N{\'e}el wall to move towards the less preferred end state. This is true for the reduced LLG derived in \cite{CMO}, where the authors perturb the static N{\'e}el wall profile in order to construct traveling wave solutions. Their proof relies mainly on the spectral properties of the linearized problem and the implicit function theorem.

The aim of this paper is the construction of time-periodic solutions for LLG in the thin film geometry when the external magnetic field is time-periodic. For this we investigate the full LLG and allow an out-of-plane component for the magnetization, hence no further reduction is done. We assume as in \cite{CMO} that the  material is soft ($Q \ll 1$) and that $\kappa = \delta^{-2} d^2 Q$ is bounded from below. The main difference compared to our work in \cite{alex_small} is that the linearization of the Euler-Lagrange equation possesses a nontrivial kernel. This stems from the translation invariance of the energy functional and requires (compared to \cite{alex_small}) the introduction of an additional parameter, which can be interpreted as time average of the external magnetic field. Starting from the static N{\'e}el wall profile, we then use the continuation method to construct time-periodic solutions in the case of external magnetic fields with small amplitudes \nolinebreak $\lambda$ and certain time averages $\gamma(\lambda)$ (see Theorem \ref{thm:3}). %Our arguments are in the spirit of \cite{alex_small} and \cite{CMO} (see also \cite{Bates}) and require a restriction 

The paper is organized as follows: In Section \ref{sec:Neel Wall} we recall properties of static N{\'e}el walls and rewrite LLG in a
suitable coordinate system. The linearization $\mathcal{L}^\epsilon_0$ of LLG is composed of two linear operators, which we analyze separately in Sections \ref{sec:NL1} and \ref{sec:NL2}. The thereby obtained results are used in Section \ref{sec:NL0} to study the spectral properties of the analytic semigroup generated by $\mathcal{L}^\epsilon_0$. These properties are the crucial ingredients for our perturbation argument in the final section.

\paragraph{Notation.} Before we start, we introduce and recall some short hand notations and definitions. We write $L^p = L^p(\setR)$ and $H^k=H^k(\setR)$
for the Lebesgue and Sobolev spaces on $\setR$,
and ``$\int$'' means integration over the whole real line with respect to the one-dimensional Lebesgue measure. We write $u \perp v$ to indicate that the $L^2$-functions $u$ and $v$ are perpendicular in $L^2$, that is  $(u,v)_{L^2} = 0$.
Furthermore, we use the abbreviations
\begin{align*}
 \cdot ' = \frac{d}{dx} \qquad \text{and} \qquad \cdot'' = \Delta = \frac{d^2}{dx^2}
\end{align*}
for the first and second derivatives with respect to a one-dimensional space variable $x$. 
For the Fourier transform, we make the convention
\begin{align*}
 \mathcal{F} u (\xi) = \widehat{u}(\xi) = \frac{1}{\sqrt{2 \pi}} \int 
e^{-\dot{\imath} x \xi } u(x) \, dx
\end{align*}
for functions $u:\setR \to \setC$ whenever this is well-defined. In particular, we have that $(\hat{u},\hat{v})_{L^2} = (u,v)_{L^2}$ for all $u,v \in L^2$. For a Banach space  $X$ and a given $0<\beta<1$. we denote by $C^{0,\beta}_\beta(]0,T],X)$ the set of all bounded functions $f:\,]0,T] \to X$ such that
\begin{align*}
 [f]_{C^{0,\beta}_\beta(]0,T],X)} = \sup_{0<\epsilon<T} \epsilon^\beta [f]_{C^{0,\beta}([\epsilon,T],X)}
 < \infty \,.
\end{align*}
This forms a Banach space with norm defined by
\begin{align*}
\norm{f}_{C^{0,\beta}_\beta(]0,T],X)} = \norm{f}_{C(]0,T],X)} + [f]_{C^{0,\beta}_\beta(]0,T],X)} \,.
\end{align*}
Moreover, we write $C^{1,\beta}_\beta (]0,T],X)$ for the set of all bounded and differentiable functions $f :\,]0,T] \to X$ with derivative $f'$ belonging to $C^{0,\beta}_\beta(]0,T],X)$. Again, this is a Banach space with norm defined by
\begin{align*}
\norm{f}_{C^{1,\beta}_\beta (]0,T],X)} = \norm{f}_{C(]0,T],X)} + \norm{f'}_{C^{0,\beta}_\beta(]0,T],X)}\,.
\end{align*}
Given a Banach space $Y$ and a linear operator $A:D(A) \subset Y \to Y$, we say that $A$ is sectorial if there are constants $\omega \in \setR$, $\theta \in (\pi/2,\pi)$, and $M>0$ such that the resolvent set $\rho(A)$ contains the sector $S_{\theta,\omega} = \set{ \lambda \in \setC \, | \, \lambda \not= \omega, \, \abs{\arg(\lambda - \omega)} < \theta}$,
and the resolvent estimate
\begin{align*}
 \norm{\text{R}(\lambda,A)} \le \frac{M}{\abs{\lambda - \omega}}
\end{align*}
is satisfied for all $\lambda \in  S_{\theta,\omega}$. Moreover, we denote by $(e^{tA})_{t \ge 0}$ the analytic semigroup generated by the sectorial operator $A$ (see for example the book by Lunardi \cite{lunardi} for a self-contained presentation of the theory of sectorial operators and analytic semigroups).

\section{N{\'e}el walls and LLG for N{\'e}el walls}
\label{sec:Neel Wall}
\paragraph{Parameter reduction and N{\'e}el walls.}
We consider the micromagnetic energy functional for static and planar magnetizations $m = (m_1,m_2):\setR \to S^1$. If we rescale space by
\begin{align*}
 x \mapsto \frac{\delta}{Q} x \, .
\end{align*}
then we obtain -- after a further renormalization of the energy by $\delta$ -- the rescaled energy functional
\begin{align*}
 {E_\text{res}^\epsilon}(m) =  \kappa \int \abs{m'}^2 + \int m_1^2 + \frac{1}{\epsilon} \int
 \mathcal{S}_\epsilon [m_1] m_1
\end{align*}
with parameters $\kappa = \delta^{-2}d^2 Q$ and $\epsilon = Q$. 
Moreover, we have the following lemma for $ \mathcal{S}_\epsilon$:
\begin{lem}
\label{lem:reduced stray field}
 The linear operator $\mathcal{S}_\epsilon$ defines a linear and bounded mapping from $H^k$ to $H^k$ for every $k \in \setN \cup \set{0}$. Furthermore, $\mathcal{S}_\epsilon$ satisfies
\begin{align*}
  (\mathcal{S}_\epsilon[u],v)_{L^2} = (u,\mathcal{S}_\epsilon[v])_{L^2}
  \qquad \text{and} \qquad (\mathcal{S}_\epsilon[u], u)_{L^2} \ge 0
\end{align*}
for every $u,v \in L^2$. If $u \in L^2$ is a real-valued function, then $\mathcal{S}_\epsilon[u]$ is real-valued as well.
\end{lem}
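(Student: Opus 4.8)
The plan is to reduce everything to elementary estimates on the Fourier side, exploiting that $\mathcal{S}_\epsilon$ is by definition the Fourier multiplication operator with symbol $\sigma_\epsilon$, and that this symbol is real-valued, even (it depends on $\xi$ only through $\abs{\xi}$), nonnegative, and bounded. In fact one has the sharper bound $0 \le \sigma_\epsilon(\xi) \le 1$, which is immediate from the stated properties together with the observation that $t \mapsto (1-e^{-t})/t$ takes values in $(0,1]$ for $t>0$ (continuously extended by the value $1$ at $t=0$); but only boundedness of the symbol is actually needed.

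First, for the mapping property, I would use the Fourier characterization of the Sobolev norm, $\norm{u}_{H^k}^2 \simeq \int (1+\xi^2)^k \abs{\widehat{u}(\xi)}^2 \, d\xi$ (with equality if this is taken as the definition of the norm). Since $\widehat{\mathcal{S}_\epsilon[u]}(\xi) = \sigma_\epsilon(\xi)\widehat{u}(\xi)$ and $\abs{\sigma_\epsilon(\xi)} \le \norm{\sigma_\epsilon}_{L^\infty}$, one gets $\int (1+\xi^2)^k\abs{\sigma_\epsilon \widehat{u}}^2 \le \norm{\sigma_\epsilon}_{L^\infty}^2 \int(1+\xi^2)^k\abs{\widehat{u}}^2$, hence $\mathcal{S}_\epsilon : H^k \to H^k$ is well-defined, linear, and bounded, for every $k \in \setN \cup \set{0}$ (the case $k=0$ being the $L^2$ statement needed below). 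In particular $\mathcal{S}_\epsilon$ is well-defined on $L^2$, so the bilinear identities in the statement make sense.

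Next, for symmetry and nonnegativity, I would invoke Plancherel in the form $(\phi,\psi)_{L^2} = (\widehat{\phi},\widehat{\psi})_{L^2}$: then $(\mathcal{S}_\epsilon[u],v)_{L^2} = \int \sigma_\epsilon \widehat{u}\, \overline{\widehat{v}}$, and since $\sigma_\epsilon$ is real-valued this equals $\int \widehat{u}\, \overline{\sigma_\epsilon\widehat{v}} = (u, \mathcal{S}_\epsilon[v])_{L^2}$; taking $v=u$ gives $(\mathcal{S}_\epsilon[u],u)_{L^2} = \int \sigma_\epsilon \abs{\widehat{u}}^2 \ge 0$ because $\sigma_\epsilon \ge 0$. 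Finally, for real-valuedness, I would use that a tempered function $\phi$ is real-valued a.e.\ if and only if $\widehat{\phi}(-\xi) = \overline{\widehat{\phi}(\xi)}$; if $u$ is real this conjugate symmetry holds for $\widehat{u}$, and since $\sigma_\epsilon$ is real and even it is inherited by $\sigma_\epsilon\widehat{u} = \widehat{\mathcal{S}_\epsilon[u]}$, so $\mathcal{S}_\epsilon[u]$ is real-valued.

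There is essentially no serious obstacle here; the only point requiring a little care is that $\sigma_\epsilon$ is bounded but does not decay (indeed $\sigma_\epsilon(\xi) \to 1$ as $\abs{\xi}\to\infty$), so $\mathcal{S}_\epsilon$ cannot be written as convolution against an $L^1$-kernel, and all arguments should be phrased directly in terms of the multiplier rather than a spatial kernel. Writing $\mathcal{S}_\epsilon = I - \mathcal{R}_\epsilon$ with $\mathcal{R}_\epsilon$ the multiplier of symbol $(1-e^{-\epsilon\abs{\xi}})/(\epsilon\abs{\xi})$ does not remedy this, since that symbol is still only $O(1/\abs{\xi})$ at infinity; the Fourier-side computations above sidestep the issue entirely.
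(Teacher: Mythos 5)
Your proof is correct and follows the same route as the paper's: boundedness from the boundedness of $\sigma_\epsilon$ via the Fourier characterization of $H^k$, symmetry and positive semidefiniteness from $\sigma_\epsilon$ being real-valued and nonnegative, and real-valuedness from $\sigma_\epsilon$ being even. You merely spell out the Plancherel computations and the conjugate-symmetry criterion that the paper leaves implicit.
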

\begin{proof}
The boundedness of $\sigma_\epsilon$ combined with the Fourier characterization of the Sobolev spaces implies that $\mathcal{S}_\epsilon:H^k \to H^k$ is well-defined and bounded. Since $\sigma_\epsilon$ is real-valued, we see that 
$\mathcal{S}_\epsilon$ is symmetric on $L^2$, and because of $\sigma_\epsilon \ge 0$, we obtain that $\mathcal{S}_\epsilon$ is positive semidefinite on $L^2$. The remaining statement follows from considering $\sigma_\epsilon(\xi) =
\sigma_\epsilon(-\xi)$ for every $\xi \in \setR$. The lemma is proved.
\end{proof}
As already announced, we assume in the sequel that $\kappa>0$ is fixed (or bounded from below) and vary the (small) parameter $\epsilon>0$. In particular, we are in the regime of soft thin films, where $\frac{\delta}{d} \sim \sqrt{Q}$. It can easily be seen that ${E_\text{res}^\epsilon}$ admits a minimizer $m_\epsilon$ in the set of admissible functions defined by
\begin{align*}
 \Bigset{m: \setR \to S^1 \, | \, m' \in L^2,\, m_1 \in L^2, \,
 m_1(0) = 1,\, \lim_{x \to \pm \infty} m_2(x) = \pm 1} \,.
\end{align*}
In particular, minimizers are centered in the sense that $m^\epsilon_1(0)=1$ and carry out a $180^\circ$ in-plane rotation between the end states $m_\epsilon(-\infty) = (0,-1)$, $m_\epsilon(\infty) = (0,1)$. We call them rescaled N{\'e}el walls. Moreover, minimizers are weak solutions of the Euler-Lagrange equation
\begin{align*}
 -\kappa \, m'' + \frac{1}{\epsilon}
 (\mathcal{S}_\epsilon [m_1],0)^T
 + 
  (m_1,0)^T
%  \left(
%  \begin{array}{c}
%   \mathcal{S}_\epsilon [m_1] \\ 0
%  \end{array}
%  \right)
%  + 
%  \left(
%  \begin{array}{c}
%   m_1 \\ 0
%  \end{array}
%  \right)
 = \Big( \kappa \abs{m'}^2 + \abs{m_1}^2 + \frac{1}{\epsilon} m_1 \mathcal{S}_\epsilon [m_1] \Big)
  m \,.
\end{align*}
From here it follows that rescaled N{\'e}el walls $m_\epsilon$ are smooth, and $m^\epsilon_1$, $\frac{d}{dx}m^\epsilon_2$ belong to $H^k$ for all $k \in \setN$. See for example \cite{Melcher} for the derivation of the Euler-Lagrange equation and a proof of the regularity statement. Moreover, using the notion of rearrangement, it can be seen that rescaled N{\'e}el wall profiles $m^\epsilon_1$ are nonnegative and symmetrically decreasing (see \cite{Melcher}).
\paragraph{The phase function of a rescaled N{\'e}el wall.}
For the purpose of spectral analysis, we introduce for rescaled N{\'e}el walls $m_\epsilon$ as in \cite{CMO} the smooth phase function $\theta_\epsilon:\setR \to \setR$ such that $\theta_\epsilon(0)=0$ and
\begin{align*}
 m_\epsilon = (m^\epsilon_1,m^\epsilon_2) = (\cos \theta_\epsilon, \sin \theta_\epsilon) \,.
\end{align*}
 The rescaled energy functional ${E_\text{res}^\epsilon}$ and the Euler-Lagrange equation in these coordinates read as
\begin{align*}
{E_\text{res}^\epsilon} (\theta) = \kappa \int \abs{\theta'}^2 + \int \cos^2 \theta + \frac{1}{\epsilon} \int \mathcal{S}_\epsilon [\cos \theta] \cos \theta
\end{align*}
and
\begin{align}
 \label{eq:Euler-Lagrange-N}
 \tag*{$(EL)$}
 \kappa \theta'' + \frac{1}{2} \sin (2\theta) + \frac{1}{\epsilon} \mathcal{S}_\epsilon [\cos \theta] \sin \theta =0 \, ,
\end{align}
respectively. From the above stated regularity properties for rescaled N{\'e}el walls $m_\epsilon = (\cos \theta_\epsilon , \sin \theta_\epsilon)$ (or directly from the Euler-Lagrange equation for the phase function), we obtain $\theta'_\epsilon \in H^k$ for all $k \in \setN$. Since $m^\epsilon_1 = \cos \theta_\epsilon$ is nonnegative and $\theta_\epsilon(0)=0$, we find
$- \frac{\pi}{2} \le \theta_\epsilon \le \frac{\pi}{2}$.
We now see that
\begin{align*}
 \lim_{x \to \pm \infty} \theta_\epsilon(x) = \pm \frac{\pi}{2} \, ,
\end{align*}
and since $m^\epsilon_1 = \cos \theta_\epsilon$ is symmetrically decreasing, we obtain that $\theta_\epsilon$ is nondecreasing. As in \cite{CMO} we show that $\theta'_\epsilon(0)>0$:
Set $b(x) = \mathcal{S}_\epsilon [\cos \theta_\epsilon](x)$ and observe that $b$ is continuous and bounded. Now assume that $\theta'_\epsilon(0)=0$. Then $\theta_\epsilon$ solves the ODE
\begin{align*}
 \kappa \theta'' + \frac{1}{2} \sin (2\theta) + \frac{1}{\epsilon} b(x) \sin \theta =0, \quad \theta(0)=0, \quad
 \theta'(0)=0,
\end{align*}
and the uniqueness theorem implies $\theta_\epsilon\equiv0$, a contradiction. 

We summarize the properties of the phase function $\theta_\epsilon$ of a rescaled N{\'e}el wall in the next lemma.
\begin{lem}
\label{lem:Nphasefunction}
 The phase function $\theta_\epsilon$ of a rescaled N{\'e}el wall is smooth and satisfies the following
 properties:
 \begin{enumerate}
  \item[(i)] $\theta_\epsilon(0)=0$, $\theta'_\epsilon \ge 0$, and $\theta'_\epsilon(0)>0$.
  \item[(ii)] $\theta'_\epsilon$, $\cos \theta_\epsilon \in H^k$ for all $k \in \setN$.
  \item[(iii)] $-\frac{\pi}{2} \le \theta_\epsilon \le \frac{\pi}{2}$ and $\lim_{x \to \pm \infty} \theta_\epsilon(x) = \pm \frac{\pi}{2}$.
 \end{enumerate}
\end{lem}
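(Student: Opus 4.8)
The lemma records properties that have essentially been worked out in the discussion preceding it, so the plan is to assemble those pieces and to spell out the one point — the $H^k$-regularity of $\theta_\epsilon'$ — that was only asserted. The common starting data are the relation $m_\epsilon=(\cos\theta_\epsilon,\sin\theta_\epsilon)$ with $\theta_\epsilon(0)=0$, together with the known facts about rescaled N\'eel walls: $m_\epsilon$ is smooth, $m_1^\epsilon$ and $(m_2^\epsilon)'$ belong to $H^k$ for every $k\in\setN$, $m_1^\epsilon\ge0$ is symmetrically decreasing, and $m_\epsilon(\pm\infty)=(0,\pm1)$.

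For the bound in (iii) and the sign of $\theta_\epsilon'$ in (i): since $m_1^\epsilon=\cos\theta_\epsilon\ge0$ and $\theta_\epsilon$ is continuous with $\theta_\epsilon(0)=0$, necessarily $-\tfrac\pi2\le\theta_\epsilon\le\tfrac\pi2$, whence $\theta_\epsilon=\arcsin m_2^\epsilon$. The limits $\theta_\epsilon(\pm\infty)=\pm\tfrac\pi2$ then follow from $m_2^\epsilon\to\pm1$ and continuity of $\arcsin$, and monotonicity of $\theta_\epsilon$ follows by combining the symmetric-decreasing structure of $m_1^\epsilon=\cos\theta_\epsilon$ (which makes $|\theta_\epsilon|$ nondecreasing on $[0,\infty)$) with the boundary values just found and the oddness of $\theta_\epsilon$ (a consequence of the evenness of $m_1^\epsilon$, the oddness of $m_2^\epsilon$ and $\theta_\epsilon(0)=0$, or of uniqueness for $(EL)$ viewed as an ODE). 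The strict inequality $\theta_\epsilon'(0)>0$ I would obtain by contradiction exactly as in the text: putting $b:=\mathcal{S}_\epsilon[\cos\theta_\epsilon]$, which is bounded and continuous by Lemma~\ref{lem:reduced stray field}, the hypothesis $\theta_\epsilon'(0)=0$ forces $\theta_\epsilon$ to solve a second-order ODE with zero Cauchy data, hence $\theta_\epsilon\equiv0$ by uniqueness, contradicting $\theta_\epsilon(\pm\infty)=\pm\tfrac\pi2$.

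For (ii): $\cos\theta_\epsilon=m_1^\epsilon\in H^k$ for all $k$ is already known, and smoothness of $\theta_\epsilon$ follows from that of $m_\epsilon$ via $\theta_\epsilon=\arcsin m_2^\epsilon$ (or by bootstrapping $(EL)$). For $\theta_\epsilon'$ I would not divide by $\cos\theta_\epsilon$ but instead use the identity $\theta_\epsilon'=-\sin\theta_\epsilon\,(m_1^\epsilon)'+\cos\theta_\epsilon\,(m_2^\epsilon)'$, which comes from $(m_1^\epsilon)'=-\theta_\epsilon'\sin\theta_\epsilon$ and $(m_2^\epsilon)'=\theta_\epsilon'\cos\theta_\epsilon$. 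Since $(m_1^\epsilon)'$ and $(m_2^\epsilon)'$ lie in $H^k$ for all $k$, and every derivative of $\sin\theta_\epsilon$ and $\cos\theta_\epsilon$ is a polynomial in $\sin\theta_\epsilon$, $\cos\theta_\epsilon$ and the derivatives of $\theta_\epsilon$ — all of which are bounded, by an easy induction using $(EL)$ — multiplication by $\sin\theta_\epsilon$ or $\cos\theta_\epsilon$ preserves $H^k$, and $\theta_\epsilon'\in H^k$ for every $k$ follows.

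The only genuinely non-trivial ingredients — smoothness and $H^k$-decay of $m_\epsilon$, and the nonnegativity and symmetric-decreasing structure of $m_1^\epsilon$ — are not proved here but taken from \cite{Melcher}; granting those, the sole point requiring care is the $H^k$-estimate for $\theta_\epsilon'$ near possible zeros of $\cos\theta_\epsilon$, which the displayed identity circumvents, so I expect no real obstacle beyond keeping the bookkeeping straight.
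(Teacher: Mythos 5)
Your argument reproduces the paper's own reasoning (the lemma has no separate proof environment; it summarizes the preceding discussion): the same deduction of (iii) and $\theta_\epsilon'\ge 0$ from $m_1^\epsilon=\cos\theta_\epsilon\ge 0$, symmetrically decreasing, and the boundary values; the same ODE-uniqueness contradiction for $\theta_\epsilon'(0)>0$; and $H^k$-regularity inherited from the known regularity of $m_1^\epsilon$ and $(m_2^\epsilon)'$. The only addition is that you make the regularity transfer explicit via the identity $\theta_\epsilon'=-m_2^\epsilon(m_1^\epsilon)'+m_1^\epsilon(m_2^\epsilon)'$, which is a welcome clarification but not a different route.
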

\paragraph{LLG for N{\'e}el walls and choice of coordinates.} Now we consider LLG for one-dimensional domain walls $m=m(t,x):\setR\times\setR \to S^2$.
If we rescale space and time by
\begin{align*}
 x \mapsto \frac{\delta}{Q} x \qquad \text{and} \qquad t \mapsto \frac{1}{Q} t
 \, ,
\end{align*}
respectively, then we obtain the rescaled LLG given by
\begin{align*}
%\label{eq:rescaled LLG-N}
%\tag*{$(\ast)$}
%\hspace {-0.3cm} 
m_t = \alpha \, m \times H_{\text{eff}}^{\text{res}} - m \times ( m \times
 H_{\text{eff}}^{\text{res}} ) \, , \,\, \abs{m}=1 \, , \,\, \lim_{x \to \pm \infty} m(\cdot,x) = \pm e_2 \,,
\end{align*}
with rescaled effective field
\begin{align*}
 H_{\text{eff}}^{\text{res}} = \kappa  m'' -
 (m_1,0,m_3)^T
- \frac{1}{\epsilon}
 (\mathcal{S}_\epsilon [m_1],0, m_3 - \mathcal{S}_\epsilon [m_3])^T 
+ \frac{1}{\epsilon}
 (0, h_\text{ext},0)^T\, .
%  \left(
% \begin{array}{c}
%  m_1 \\ 0 \\ m_3
% \end{array}
% \right)
% - \frac{1}{\epsilon} \left(
% \begin{array}{c}
%  \mathcal{S}_\epsilon [m_1] \\ 0 \\ m_3 - \mathcal{S}_\epsilon [m_3] 
% \end{array}
% \right)
% + \frac{1}{\epsilon}
% \left(
% \begin{array}{c}
%  0 \\ h_\text{ext} \\ 0
% \end{array}
%  \right) \,.
\end{align*}
If the external magnetic field $h_\text{ext}$ is zero, then the rescaled N{\'e}el wall is a stationary solution for the rescaled LLG with $m_3 \equiv 0$. For $h_\text{ext} \not= 0$ and $\alpha \not= 0$, the assumption $m_3 \equiv 0$ becomes incompatible with LLG since the precession term leads to an out-of-plane component $m_3 \not= 0$. 
%This means we have to allow an out-of-plane component $m_3$.
In view of the saturation constraint $\abs{m}=1$, we introduce a spherical coordinate system and write
\begin{align*}
 m_1 = \cos \varphi \cos \theta, \quad
 m_2 = \cos \varphi \sin \theta, \quad
 m_3 = \sin \varphi,
\end{align*}
with angles $\varphi,\theta$. In particular, we have $\varphi=0$ and $\theta = \theta_\epsilon$ for the rescaled N{\'e}el wall with phase function \nolinebreak $\theta_\epsilon$. In order to rewrite the rescaled LLG in spherical coordinates, we introduce the matrix
\begin{align*}
 M(m) = 
\begin{pmatrix}
 - \sin \varphi \cos \theta & - \sin \varphi \sin \theta  & \cos \varphi
\\
-\sec \varphi \sin \theta & \hphantom{-} \sec \varphi \cos \theta & 0
\\
\hphantom{-}\cos \varphi \cos \theta & \hphantom{-}\cos \varphi \sin \theta & \sin \varphi
\end{pmatrix}
%\right)
% \left(
% \begin{array}{ccc}
%  - \sin \varphi \cos \theta & - \sin \varphi \sin \theta  & \cos \varphi
% \\
% -\sec \varphi \sin \theta & \hphantom{-} \sec \varphi \cos \theta & 0
% \\
% \hphantom{-}\cos \varphi \cos \theta & \hphantom{-}\cos \varphi \sin \theta & \sin \varphi
% \end{array}
% \right)
\end{align*}
with determinant given by $\det M(m) = - \sec \varphi$. For $m$ close to the rescaled N{\'e}el wall, we have $\varphi \approx 0$ and the matrix $M(m)$ becomes invertible. Multiplication of the rescaled LLG with $M(m)$ leads to an equivalent equation in terms of $\varphi,\theta$. A rather long but straightforward calculation shows:
\begin{align*}
\tag*{$(LLG)_\epsilon$}
 \begin{array}{ll}
  \varphi_t = R^\epsilon_1(t,\varphi,\theta,h_\text{ext})\, , & \quad \lim_{x \to \pm \infty} \varphi(\cdot,x) = 0 \, ,
  \\
  \theta_t = R^\epsilon_2(t,\varphi,\theta,h_\text{ext})\, , & \quad \lim_{x \to \pm \infty} \theta(\cdot,x) = \pm \frac{\pi}{2} \, ,
 \end{array}
\end{align*}
where
\begin{align*}
 &R^\epsilon_1(t,\varphi,\theta,h_\text{ext}) 
\\
=& \frac{\alpha}{\epsilon} h_\text{ext} \cos \theta + \frac{1}{\epsilon} \mathcal{S}_\epsilon [\sin \varphi] \cos \varphi
 + \frac{1}{\epsilon} \mathcal{S}_\epsilon [\cos \varphi \cos \theta] \sin \varphi \cos \theta
 + \frac{\alpha}{\epsilon} 
 \mathcal{S}_\epsilon [\cos \varphi \cos \theta] \sin \theta - \frac{1}{\epsilon} h_\text{ext} \sin \varphi \sin \theta 
\\
&- 2 \alpha 
\kappa \sin \varphi \, \varphi' \theta' 
+ \frac{1}{4 \epsilon} \sin (2\varphi) (-2 -\epsilon + \epsilon \cos (2\theta) + 2 \epsilon \kappa (\theta')^2) + \kappa \varphi'' 
%\\
+ \frac{\alpha}{2} \cos \varphi \sin (2\theta) 
+ \alpha \kappa \cos \varphi \, \theta''
\\
\intertext{and}
&R^\epsilon_2(t,\varphi,\theta,h_\text{ext}) 
\\
=& -\frac{\alpha}{\epsilon} \mathcal{S}_\epsilon [\sin \varphi] + \frac{1}{\epsilon} h_\text{ext} \cos \theta \sec \varphi + \frac{\alpha}{2 \epsilon} \sin \varphi \big(2+ \epsilon - \epsilon \cos (2\theta) \big) 
%\\
+ \frac{1}{2} \sin (2\theta) 
+ \frac{\alpha}{\epsilon} h_\text{ext} \tan \varphi \sin \theta  
\\
&+\frac{1}{\epsilon} \mathcal{S}_\epsilon [ \cos \varphi \cos \theta ] \sec \varphi \sin \theta 
%\\
- \frac{\alpha}{\epsilon} \mathcal{S}_\epsilon [ \cos \varphi \cos \theta ] \tan \varphi \cos \theta 
- 2 \kappa \tan \varphi \, \varphi' \theta' - \alpha \kappa \sin \varphi (\theta')^2 
\\
&- \alpha \kappa \sec \varphi \, \varphi'' + \kappa \theta'' \, .
\end{align*}
In the following we investigate $(LLG)_\epsilon$ and construct time-periodic solutions close to the rescaled N{\'e}el wall for time-periodic external magnetic fields $h_\text{ext}$.
\paragraph{Linearization of the rescaled LLG.}
As in \cite{alex_small}, the linearization of $(LLG)_\epsilon$ at the stationary solution is of crucial importance for our arguments. If we set $h_\text{ext}=0$, then the linearization of the right hand side with respect to $(\varphi,\theta)$ at $(\varphi,\theta)=(0,\theta_\epsilon)$ is given by
\begin{align*}
 \mathcal{L}_0^\epsilon = 
\begin{pmatrix}
 \hphantom{-\alpha} \mathcal{L}_1^\epsilon & \alpha \mathcal{L}_2^\epsilon
\\
-\alpha \mathcal{L}_1^\epsilon & \hphantom{\alpha}\mathcal{L}_2^\epsilon
\end{pmatrix}
: H^2 \times H^2 \subset L^2 \times L^2 \to L^2 \times L^2\, ,
\end{align*}
where
\begin{align*}
 \mathcal{L}_1^\epsilon u = \hspace{-0.1cm}\kappa u'' - \frac{1}{\epsilon} u - \frac{1}{2} u 
+ \frac{1}{2} \cos(2 \theta_\epsilon) u + \hspace{-0.1cm}\kappa (\theta'_\epsilon)^2 u + \frac{1}{\epsilon} \mathcal{S}_\epsilon [\cos \theta_\epsilon ] \cos \theta_\epsilon \, u + \frac{1}{\epsilon} \mathcal{S}_\epsilon [u]
\end{align*}
and
\begin{align*}
 \mathcal{L}_2^\epsilon v = \hspace{-0.1cm}\kappa v'' + \cos(2\theta_\epsilon) v - \frac{1}{\epsilon} \mathcal{S}_\epsilon [\sin \theta_\epsilon \, v] \sin \theta_\epsilon + \frac{1}{\epsilon} \mathcal{S}_\epsilon [\cos \theta_\epsilon ] \cos \theta_\epsilon \, v
\end{align*}
for $u,v \in H^2$. We remark that $\mathcal{L}_2^\epsilon$ is the linearization of the Euler-Lagrange equation \ref{eq:Euler-Lagrange-N} for the phase function at $\theta_\epsilon$. In the following two sections, we collect properties of $\mathcal{L}_1^\epsilon$ and $\mathcal{L}_2^\epsilon$ in order to analyze the spectrum of $\mathcal{L}_0^\epsilon$ in Section \ref{sec:NL0}. To be more precise, we show that $0$ is an isolated point in $\sigma(\mathcal{L}_0^\epsilon)$ with one-dimensional eigenspace spanned by $(0,\theta'_\epsilon)$ and $\sigma(\mathcal{L}_0^\epsilon) \cap \dot{\imath}\setR  = \set{0}$, provided the parameter $\epsilon>0$ is small enough.
\section{The linear operator $\mathcal{L}_1^\epsilon$}
\label{sec:NL1}
In this section we prove that $\mathcal{L}_1^\epsilon$ is self-adjoint and invertible for
$\epsilon$ small enough. For this we need a priori estimates for the phase function $\theta_\epsilon$ of a rescaled N{\'e}el wall independent of the parameter $\epsilon$. We first state an elementary lemma (without proof) for the symbol of $\mathcal{S}_\epsilon$:
\begin{lem}
\label{lem:multiplier}
 We have $\frac{1}{\epsilon} \sigma_\epsilon(\xi) \le \abs{\xi}$ for all $\xi \in \setR$ and
 $\epsilon >0$.
\end{lem}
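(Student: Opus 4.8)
The plan is to reduce the claimed bound to a one–variable elementary inequality and then verify that inequality by elementary calculus. Since $\sigma_\epsilon(\xi) = 1 - \frac{1 - e^{-\epsilon\abs{\xi}}}{\epsilon\abs{\xi}}$ depends on $\xi$ and $\epsilon$ only through the product $s := \epsilon\abs{\xi} \ge 0$, I would first rewrite the asserted estimate $\frac{1}{\epsilon}\sigma_\epsilon(\xi) \le \abs{\xi}$ as $\sigma_\epsilon(\xi) \le \epsilon\abs{\xi} = s$, that is,
\[
 1 - \frac{1 - e^{-s}}{s} \le s \qquad (s > 0).
\]
Multiplying through by $s > 0$ and rearranging, this is equivalent to
\[
 e^{-s} \le 1 - s + s^2 \qquad (s \ge 0),
\]
where the case $s = 0$ — corresponding to $\xi = 0$, at which $\sigma_\epsilon(0) = 0 = \abs{\xi}$ — is the trivial equality $1 = 1$.

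It then remains to establish the scalar inequality $e^{-s} \le 1 - s + s^2$ for $s \ge 0$. I would set $g(s) = 1 - s + s^2 - e^{-s}$ and note $g(0) = 0$ and $g'(0) = -1 + 2\cdot 0 + 1 = 0$. Differentiating once more gives $g''(s) = 2 - e^{-s} \ge 2 - 1 > 0$ for every $s \ge 0$, so $g'$ is strictly increasing on $[0,\infty)$; combined with $g'(0) = 0$ this yields $g'(s) \ge 0$ on $[0,\infty)$, hence $g$ is nondecreasing there, and with $g(0) = 0$ we conclude $g(s) \ge 0$ for all $s \ge 0$. Tracing the equivalences back gives $\frac{1}{\epsilon}\sigma_\epsilon(\xi) \le \abs{\xi}$ for all $\xi \in \setR$ and $\epsilon > 0$.

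There is essentially no genuine obstacle here: the only points requiring a little care are the reduction step (keeping track that multiplication by $s$ preserves the inequality since $s \ge 0$, and treating $\xi = 0$, equivalently $s = 0$, as a degenerate case) and the observation that the quadratic $1 - s + s^2$ has negative discriminant, so it is strictly positive for all real $s$ and in particular dominates the positive, decaying function $e^{-s}$ on the whole half–line. The two–fold differentiation of $g$ is the entire content of the argument.
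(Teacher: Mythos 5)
Your argument is correct, and the reduction to the scalar inequality $e^{-s}\le 1-s+s^2$ on $[0,\infty)$ followed by the two-step differentiation is a clean and complete verification; the endpoint $s=0$ is handled properly, and multiplying by $s>0$ preserves the inequality as you note. There is nothing in the paper to compare this against: the lemma is explicitly stated ``without proof,'' so you have simply supplied the elementary verification the author chose to omit. One remark you may find useful: writing $\frac{1-e^{-s}}{s}=\int_0^1 e^{-st}\,dt$ gives
\begin{align*}
\sigma_\epsilon(\xi) = \int_0^1 \bigl(1-e^{-st}\bigr)\,dt \le \int_0^1 st\,dt = \frac{s}{2},
\end{align*}
which yields the sharper constant $\tfrac{1}{\epsilon}\sigma_\epsilon(\xi)\le\tfrac12\abs{\xi}$ in one line, using only $1-e^{-u}\le u$; your calculus argument, by contrast, is entirely self-contained and requires no integral representation of the symbol. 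Either route is fine for the paper's purposes, since only the bound by $\abs{\xi}$ (up to a constant) is used downstream in Lemma \ref{lem:apriori}.
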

% \begin{proof}
%  We define $\sigma (\xi) = 1 - \frac{1-e^{-\abs{\xi}}}{\abs{\xi}}$ for $\xi \in \setR$ and remark the identity $\sigma_\epsilon (\xi) = \sigma(\epsilon \xi)$. Therefore, it is sufficient to show that $\sigma(\xi) \le \abs{\xi}$, which is equivalent to $f(x) = x^2 -x +1 -e^{-x} \ge 0$ for all $x \ge 0$.
% We now compute $f'(x) = 2x - 1 + e^{-x}$ and $f''(x) = 2 - e^{-x} \ge 0$ for all $x \ge 0$. Since $f'' \ge 0$ and $f'(0)=0$, we find $f'\ge 0$, hence $f \ge 0$ because of $f(0)=0$. The lemma is proved.
% \end{proof}
Next, we use the rescaled energy functional for the phase function
and the corresponding Euler-Lagrange equation to obtain the required a priori estimates.
\begin{lem}
\label{lem:apriori}
 For the phase function $\theta_\epsilon$ of a rescaled N{\'e}el wall, the following a priori estimates
 are satisfied with a constant $C>0$ independent of $\epsilon>0$:
\begin{enumerate}
 \item[(i)] $\norm{\theta'_\epsilon}_{L^2}$, $\norm{\cos \theta_\epsilon}_{H^1} \le C$
 \item[(ii)] $\norm{\theta'_\epsilon}_{H^1}$, $\norm{\theta'_\epsilon}_{L^\infty} \le C$
 %\item[(iii)] $\norm{\theta'_\epsilon}_{H^1}$, $\norm{\theta'_\epsilon}_{L^\infty} \le C$
 \item[(iii)] $\frac{1}{\epsilon}\norm{\mathcal{S}_\epsilon[\cos \theta_\epsilon] \cos \theta_\epsilon}_{L^\infty}
 \le C$
\end{enumerate}
\end{lem}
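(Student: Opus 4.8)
The plan is to derive the three estimates in a cascade, starting from the energy functional and then bootstrapping through the Euler-Lagrange equation \ref{eq:Euler-Lagrange-N}. First I would establish (i) by a competitor argument: since $m_\epsilon$ minimizes ${E_\text{res}^\epsilon}$ and ${E_\text{res}^\epsilon}(\theta) = \kappa \int \abs{\theta'}^2 + \int \cos^2\theta + \frac{1}{\epsilon}\int \mathcal{S}_\epsilon[\cos\theta]\cos\theta \ge \kappa \int \abs{\theta'}^2 + \int \cos^2\theta$ (the stray field term being nonnegative by Lemma \ref{lem:reduced stray field}), it suffices to produce one admissible test profile whose energy is bounded by a constant independent of $\epsilon$. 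A fixed smooth $180^\circ$ rotation $\theta_0$ with $\theta_0' \in L^2$ and $\cos\theta_0 \in L^2$ does the job once one checks $\frac{1}{\epsilon}\int \mathcal{S}_\epsilon[\cos\theta_0]\cos\theta_0$ stays bounded; here Lemma \ref{lem:multiplier} gives $\frac{1}{\epsilon}\sigma_\epsilon(\xi) \le \abs{\xi}$, so $\frac{1}{\epsilon}(\mathcal{S}_\epsilon[\cos\theta_0],\cos\theta_0)_{L^2} = \frac{1}{\epsilon}\int \sigma_\epsilon(\xi)\abs{\widehat{\cos\theta_0}}^2 \,d\xi \le \int \abs{\xi}\abs{\widehat{\cos\theta_0}}^2\,d\xi \lesssim \norm{\cos\theta_0}_{H^{1/2}}^2$, which is finite for the fixed profile. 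Thus $\norm{\theta_\epsilon'}_{L^2}^2 + \norm{\cos\theta_\epsilon}_{L^2}^2 \le C$; combining with $\norm{(\cos\theta_\epsilon)'}_{L^2} = \norm{\theta_\epsilon' \sin\theta_\epsilon}_{L^2} \le \norm{\theta_\epsilon'}_{L^2}$ yields $\norm{\cos\theta_\epsilon}_{H^1}\le C$.

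Next, for (ii) I would test the Euler-Lagrange equation against suitable functions to gain one derivative on $\theta_\epsilon'$. Rewriting \ref{eq:Euler-Lagrange-N} as $\kappa\theta_\epsilon'' = -\frac{1}{2}\sin(2\theta_\epsilon) - \frac{1}{\epsilon}\mathcal{S}_\epsilon[\cos\theta_\epsilon]\sin\theta_\epsilon$, I estimate the right side in $L^2$: the term $\frac{1}{2}\sin(2\theta_\epsilon) = \sin\theta_\epsilon\cos\theta_\epsilon$ is bounded in $L^2$ by $\norm{\cos\theta_\epsilon}_{L^2} \le C$ from (i); for the stray field term I use that $\frac{1}{\epsilon}\mathcal{S}_\epsilon$ maps $H^1$ into $L^2$ with norm $\lesssim 1$ — indeed $\frac{1}{\epsilon}\norm{\mathcal{S}_\epsilon[f]}_{L^2}^2 = \frac{1}{\epsilon^2}\int \sigma_\epsilon^2\abs{\widehat{f}}^2 \le \int \abs{\xi}^2\abs{\widehat{f}}^2 \le \norm{f}_{H^1}^2$ by Lemma \ref{lem:multiplier} — applied to $f = \cos\theta_\epsilon \in H^1$ with norm controlled by (i), and then multiply by the bounded factor $\sin\theta_\epsilon$. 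Hence $\norm{\theta_\epsilon''}_{L^2}\le C$, so $\norm{\theta_\epsilon'}_{H^1}\le C$, and the Sobolev embedding $H^1(\setR)\hookrightarrow L^\infty(\setR)$ gives $\norm{\theta_\epsilon'}_{L^\infty}\le C$.

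For (iii) the point is to bound $\frac{1}{\epsilon}\mathcal{S}_\epsilon[\cos\theta_\epsilon]$ in $L^\infty$, and then multiply by $\abs{\cos\theta_\epsilon}\le 1$. By Lemma \ref{lem:multiplier}, $\frac{1}{\epsilon}\norm{\mathcal{S}_\epsilon[\cos\theta_\epsilon]}_{L^\infty} \le \norm{\mathcal{F}^{-1}(\abs{\xi}\widehat{\cos\theta_\epsilon})}_{L^\infty} \le \frac{1}{\sqrt{2\pi}}\norm{\abs{\xi}\widehat{\cos\theta_\epsilon}}_{L^1}$, and splitting the frequency integral at $\abs{\xi}=1$ and applying Cauchy-Schwarz on each piece bounds this by $C(\norm{\cos\theta_\epsilon}_{L^2} + \norm{(\cos\theta_\epsilon)''}_{L^2})$; the latter equals $\norm{(\theta_\epsilon'\sin\theta_\epsilon)'}_{L^2} \le \norm{\theta_\epsilon''}_{L^2} + \norm{(\theta_\epsilon')^2}_{L^2} \le \norm{\theta_\epsilon''}_{L^2} + \norm{\theta_\epsilon'}_{L^\infty}\norm{\theta_\epsilon'}_{L^2}$, all already controlled by (i) and (ii).

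The main obstacle I anticipate is the uniformity in $\epsilon$ of the stray field contributions: naively $\frac{1}{\epsilon}\mathcal{S}_\epsilon$ has a large prefactor, and everything hinges on the pointwise bound $\frac{1}{\epsilon}\sigma_\epsilon(\xi)\le\abs{\xi}$ from Lemma \ref{lem:multiplier} to absorb that factor into one spatial derivative; one must be careful that each use of this trades exactly one derivative and that the derivatives thereby demanded of $\cos\theta_\epsilon$ are supplied by the energy bound (i) and the elliptic regularity step (ii), with no circularity. A secondary technical point is verifying integrability of the fixed competitor's $H^{1/2}$ norm (equivalently the logarithmic tail issue) — but since the competitor can be chosen with, say, exponential or compactly-controlled decay of $\cos\theta_0$, this is harmless and does not require the delicate logarithmic-tail analysis of the genuine N\'eel wall.
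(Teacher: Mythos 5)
Your proposal is correct and follows essentially the same route as the paper: a competitor argument and Lemma \ref{lem:multiplier} for (i), bootstrapping through the Euler--Lagrange equation \ref{eq:Euler-Lagrange-N} with the multiplier bound and the $H^1\hookrightarrow L^\infty$ embedding for (ii), and a Fourier-side estimate relying on $\norm{\cos\theta_\epsilon}_{H^2}\le C$ for (iii). The only (cosmetic) deviation is in (iii): you bound $\tfrac{1}{\epsilon}\mathcal{S}_\epsilon[\cos\theta_\epsilon]$ in $L^\infty$ directly via $\norm{\abs{\xi}\widehat{\cos\theta_\epsilon}}_{L^1}$ and a frequency split, then multiply by $\abs{\cos\theta_\epsilon}\le1$, whereas the paper bounds $\tfrac{1}{\epsilon}\norm{\mathcal{S}_\epsilon[\cos\theta_\epsilon]\cos\theta_\epsilon}_{H^1}$ and invokes $H^1\hookrightarrow L^\infty$; also note the small typo in (ii) where $\tfrac{1}{\epsilon}\norm{\mathcal{S}_\epsilon[f]}_{L^2}^2$ should read $\tfrac{1}{\epsilon^2}\norm{\mathcal{S}_\epsilon[f]}_{L^2}^2$.
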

\begin{proof}
 For (i) we choose a smooth and admissible comparison function $\theta$ to find
\begin{align*}
 \kappa \int \abs{\theta'_\epsilon}^2 + \int \cos^2 \theta_\epsilon
 \le {E_\text{res}^\epsilon} (\theta_\epsilon)
 %\\
 \le {E_\text{res}^\epsilon} (\theta)
 %\\
 =\kappa \int \abs{\theta'}^2 + \int \cos^2 \theta + \frac{1}{\epsilon} \int \mathcal{S}_\epsilon [\cos \theta]
 \cos \theta \,.
\end{align*}
The definition of $\mathcal{S}_\epsilon$ and Lemma \ref{lem:multiplier} lead to
\begin{align*}
 \kappa \int \abs{\theta'_\epsilon}^2 + \int \cos^2 \theta_\epsilon
%  \le&\kappa \int \abs{\theta'}^2 + \int \cos^2 \theta + \frac{1}{\epsilon} \int \sigma_\epsilon
%  \abs{\widehat{\cos \theta}}^2
%  \\
 \le\kappa \int \abs{\theta'}^2 + \int \cos^2 \theta + \int \abs{\xi}
 \abs{\widehat{\cos \theta}}^2
 %\\
% \le&\kappa \int \abs{\theta}^2 + \int \cos^2 \theta + \int (1+ \abs{\xi}^2)
% \abs{\widehat{\cos \theta}}^2
% \\
 \le \kappa \norm{\theta'}_{L^2}^2 + 2 \norm{\cos \theta}_{H^1}^2
 %\\
 \le C
\end{align*}
with some constant $C>0$ independent of $\epsilon>0$. Hence, we obtain the estimates $\norm{\theta'_\epsilon}_{L^2}$, $\norm{\cos \theta_\epsilon}_{L^2} \le C$. Since $(\cos \theta_\epsilon)' = - \theta'_\epsilon \sin \theta_\epsilon$, we also find $\norm{\cos \theta_\epsilon}_{H^1} \le C$.

For (ii) we recall that $\theta_\epsilon$ solves the Euler-Lagrange equation \ref{eq:Euler-Lagrange-N}:
\begin{align*}
 \kappa \theta''_\epsilon = - \sin \theta_\epsilon \cos \theta_\epsilon - \frac{1}{\epsilon} \mathcal{S}_\epsilon
 [\cos \theta_\epsilon] \sin \theta_\epsilon \,.
\end{align*}
From the first part, we get $\norm{\sin \theta_\epsilon \cos \theta_\epsilon}_{L^2} \le \norm{\cos \theta_\epsilon}_{L^2} \le C$. Moreover, we can estimate the remaining term on the right hand side with the help of Lemma \nolinebreak \ref{lem:multiplier} and (i) as follows:
\begin{align*}
 \frac{1}{\epsilon^2} \norm{\mathcal{S}_\epsilon [\cos \theta_\epsilon] \sin \theta_\epsilon}_{L^2}^2 \le
 \frac{1}{\epsilon^2} \int \mathcal{S}_\epsilon [\cos \theta_\epsilon] ^2
% \\
% &\le \frac{1}{\epsilon^2} \int \sigma_\epsilon^2 \abs{\widehat{\cos \theta_\epsilon}}^2
% \\
 \le \int \abs{\xi}^2 \abs{\widehat{\cos \theta_\epsilon}}^2
% \\
 \le \norm{\cos \theta_\epsilon}_{H^1}^2
% \\
 &\le C \,.
\end{align*}
We conclude $\norm{\theta''_\epsilon}_{L^2} \le C$.
This combined with (i) and the embedding $H^1 \hookrightarrow L^\infty$ implies (ii).

For (iii) we first remark that
%\begin{align*}
 $\,(\cos \theta_\epsilon)'' = (- \theta'_\epsilon \sin \theta_\epsilon )' = - \theta''_\epsilon \sin \theta_\epsilon
 - (\theta'_\epsilon)^2 \cos \theta_\epsilon\,$
%\end{align*}
and find the estimate $\norm{(\cos \theta_\epsilon)''}_{L^2} \le \norm{\theta''_\epsilon}_{L^2} + \norm{\theta'_\epsilon}_{L^\infty}^2 \norm{\cos \theta_\epsilon}_{L^2} \le C$ thanks to (i) and (ii). In particular, we have $\norm{\cos \theta_\epsilon}_{H^2} \le C$. We use this and Lemma \ref{lem:multiplier} to see
\begin{align*}
 \frac{1}{\epsilon^2} \norm{\mathcal{S}_\epsilon [\cos \theta_\epsilon]}_{H^1}^2 &= \frac{1}{\epsilon^2} 
 \int (1+ \abs{\xi}^2 ) \,
 \abs{\sigma_\epsilon (\xi) \, \widehat{\cos \theta_\epsilon}(\xi)}^2
 %\\
 \le \int (1+ \abs{\xi}^2)^2 \, \abs{\widehat{\cos \theta_\epsilon}(\xi)}^2
 %\\
 = \norm{\cos \theta_\epsilon}_{H^2}^2 \, ,
\end{align*}
hence $\frac{1}{\epsilon} \norm{\mathcal{S}_\epsilon [\cos \theta_\epsilon]}_{H^1} \le C$.
%$\frac{1}{\epsilon} \norm{\mathcal{S}_\epsilon [\cos \theta_\epsilon]}_{L^\infty} \le C$. 
It follows $\frac{1}{\epsilon} \norm{\mathcal{S}_\epsilon [\cos \theta_\epsilon] \cos \theta_\epsilon}_{L^2} \le C$ and
\begin{align*}
 \frac{1}{\epsilon} \norm{(\mathcal{S}_\epsilon [\cos \theta_\epsilon] \cos \theta_\epsilon)'}_{L^2} &\le
 \frac{1}{\epsilon} \norm{\mathcal{S}_\epsilon [\cos \theta_\epsilon]}_{H^1} + \frac{1}{\epsilon} 
 \norm{\mathcal{S}_{\epsilon}[\cos \theta_\epsilon] \theta'_\epsilon \sin \theta_\epsilon}_{L^2}
 \\
 &\le C + \frac{1}{\epsilon} \norm{\mathcal{S}_\epsilon[\cos \theta_\epsilon]}_{L^2}
 \norm{\theta'_\epsilon}_{L^\infty}
 \\
 &\le C \,.
\end{align*}
We end up with $\frac{1}{\epsilon}\norm{\mathcal{S}_\epsilon [\cos \theta_\epsilon] \cos \theta_\epsilon}_{L^\infty} \le \frac{C}{\epsilon} \, \norm{\mathcal{S}_\epsilon [\cos \theta_\epsilon] \cos \theta_\epsilon}_{H^1} \le C$. This proves (iii) and the lemma.
\end{proof}
With the help of Lemma \ref{lem:apriori}, we can show that $\mathcal{L}_1^\epsilon$ is invertible for $\epsilon$ small enough.
\begin{lem}
 \label{lem:L1 is sectorial}
 The linear operator $\mathcal{L}_1^\epsilon : H^2 \subset L^2 \to L^2$ is sectorial and self-adjoint. Furthermore, $\mathcal{L}_1^\epsilon$ is invertible for
 $\epsilon>0$ small enough.
\end{lem}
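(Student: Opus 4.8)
The plan is to decompose $\mathcal{L}_1^\epsilon = \kappa\Delta + M_{b_\epsilon} + \tfrac1\epsilon\mathcal{S}_\epsilon$, where $M_{b_\epsilon}$ denotes multiplication by the real-valued function
\begin{align*}
 b_\epsilon = -\tfrac1\epsilon - \tfrac12 + \tfrac12\cos(2\theta_\epsilon) + \kappa(\theta'_\epsilon)^2 + \tfrac1\epsilon\mathcal{S}_\epsilon[\cos\theta_\epsilon]\cos\theta_\epsilon\,,
\end{align*}
and to treat the three summands separately. The operator $\kappa\Delta$ with domain $H^2$ is self-adjoint and nonpositive; by Lemma~\ref{lem:apriori}(ii),(iii) the potential $b_\epsilon$ is bounded, and discarding the contribution $-\tfrac12+\tfrac12\cos(2\theta_\epsilon)\le0$ one even has $b_\epsilon\le-\tfrac1\epsilon+C_0$ with $C_0$ independent of $\epsilon$; hence $M_{b_\epsilon}$ is bounded and self-adjoint on $L^2$. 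By Lemma~\ref{lem:reduced stray field}, $\tfrac1\epsilon\mathcal{S}_\epsilon$ is a symmetric Fourier multiplier with real nonnegative symbol $\tfrac1\epsilon\sigma_\epsilon$, and by Lemma~\ref{lem:multiplier} its symbol obeys $\tfrac1\epsilon\sigma_\epsilon(\xi)\le\abs{\xi}=o(\kappa\xi^2)$ as $\abs{\xi}\to\infty$; a Plancherel and Young argument then gives, for each $\eta>0$, a constant $C_\eta$ with $\norm{\tfrac1\epsilon\mathcal{S}_\epsilon u}_{L^2}\le\eta\norm{\kappa\Delta u}_{L^2}+C_\eta\norm{u}_{L^2}$ for $u\in H^2$, so $\tfrac1\epsilon\mathcal{S}_\epsilon$ is $\kappa\Delta$-bounded with relative bound $0$. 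Invoking the Kato--Rellich theorem for the symmetric perturbation $M_{b_\epsilon}+\tfrac1\epsilon\mathcal{S}_\epsilon$ shows that $\mathcal{L}_1^\epsilon$ is self-adjoint on $H^2$.

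Next I would read off sectoriality and, simultaneously, invertibility for small $\epsilon$ from a single quadratic-form estimate. For $u\in H^2$, Plancherel's identity, $\sigma_\epsilon\ge0$, Lemma~\ref{lem:multiplier}, and Young's inequality yield
\begin{align*}
 \tfrac1\epsilon(\mathcal{S}_\epsilon[u],u)_{L^2} = \int\tfrac1\epsilon\sigma_\epsilon(\xi)\,\abs{\widehat u(\xi)}^2 \le \int\abs{\xi}\,\abs{\widehat u(\xi)}^2 \le \tfrac\kappa2\norm{u'}_{L^2}^2 + \tfrac1{2\kappa}\norm{u}_{L^2}^2\,,
\end{align*}
while $(\kappa\Delta u,u)_{L^2}=-\kappa\norm{u'}_{L^2}^2$ and $(M_{b_\epsilon}u,u)_{L^2}\le(-\tfrac1\epsilon+C_0)\norm{u}_{L^2}^2$. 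Summing,
\begin{align*}
 (\mathcal{L}_1^\epsilon u,u)_{L^2} \le -\tfrac\kappa2\norm{u'}_{L^2}^2 + \Bigl(C_0+\tfrac1{2\kappa}-\tfrac1\epsilon\Bigr)\norm{u}_{L^2}^2 \le \omega_\epsilon\norm{u}_{L^2}^2
\end{align*}
with $\omega_\epsilon=C_0+\tfrac1{2\kappa}-\tfrac1\epsilon$. A self-adjoint operator bounded above is sectorial (see \cite{lunardi}), which gives the first two assertions. Finally, as soon as $\epsilon>0$ is small enough that $\omega_\epsilon\le-1$, the estimate becomes $(\mathcal{L}_1^\epsilon u,u)_{L^2}\le-\norm{u}_{L^2}^2$ for every $u\in H^2$, so $\sigma(\mathcal{L}_1^\epsilon)\subset(-\infty,-1]$; in particular $0\in\rho(\mathcal{L}_1^\epsilon)$ and $\mathcal{L}_1^\epsilon$ is boundedly invertible with $\norm{(\mathcal{L}_1^\epsilon)^{-1}}\le1$.

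I expect the only genuinely delicate point to be the treatment of the unbounded stray-field term $\tfrac1\epsilon\mathcal{S}_\epsilon$: it must be absorbed into the exchange term $-\kappa\Delta$ using the sharp symbol bound of Lemma~\ref{lem:multiplier} (the crude bound $\sigma_\epsilon\le1$ would only produce a $\tfrac1\epsilon\norm{u}_{L^2}^2$ term, which cancels $-\tfrac1\epsilon u$ exactly and leaves no room for a strict sign), so that in the quadratic form it costs merely an $\epsilon$-independent multiple of $\norm{u}_{L^2}^2$; the large negative zeroth-order term $-\tfrac1\epsilon u$ then dominates all the remaining potentials, which are bounded uniformly in $\epsilon$ by virtue of Lemma~\ref{lem:apriori}. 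Everything else is routine functional analysis.
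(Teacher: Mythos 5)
Your proof is correct, but it reaches the three conclusions in a different order and via different tools than the paper. The paper first obtains sectoriality by noting that $\mathcal{L}_1^\epsilon = \kappa\Delta + \mathcal{B}^\epsilon$ with $\mathcal{B}^\epsilon$ bounded on $L^2$ and citing the bounded-perturbation result in Lunardi; self-adjointness then follows immediately, because a symmetric operator whose resolvent set meets both the upper and lower half-plane (which sectoriality guarantees) is self-adjoint; finally invertibility is obtained by a Lax--Milgram argument for the quadratic form on $H^1$ followed by an elliptic-regularity upgrade to $H^2$. You instead argue self-adjointness first via Kato--Rellich, then read off sectoriality from the fact that a self-adjoint operator bounded above is sectorial, and get invertibility directly from the spectral inclusion $\sigma(\mathcal{L}_1^\epsilon)\subset(-\infty,-1]$. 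Both routes rely on exactly the same analytic core, namely the quadratic-form estimate that absorbs the stray-field contribution into the exchange term using the sharp symbol bound $\tfrac1\epsilon\sigma_\epsilon\le\abs{\xi}$ of Lemma \ref{lem:multiplier} together with the $\epsilon$-uniform $L^\infty$ bounds of Lemma \ref{lem:apriori}, so that the large negative zeroth-order term $-\tfrac1\epsilon$ survives; your remark about why the crude bound $\sigma_\epsilon\le1$ is useless here is exactly the right observation and mirrors the paper's coercivity computation. Your spectral-calculus route to invertibility is arguably cleaner than Lax--Milgram once self-adjointness is in hand. One small overcomplication: for fixed $\epsilon>0$ the operator $\tfrac1\epsilon\mathcal{S}_\epsilon$ is \emph{bounded} on $L^2$ (its symbol satisfies $0\le\tfrac1\epsilon\sigma_\epsilon\le\tfrac1\epsilon$), so the whole perturbation $M_{b_\epsilon}+\tfrac1\epsilon\mathcal{S}_\epsilon$ is a bounded symmetric operator, and self-adjointness of $\mathcal{L}_1^\epsilon$ on $H^2$ follows from the elementary bounded-perturbation special case of Kato--Rellich; the relative-bound-zero machinery you invoke is not wrong, but it is heavier than needed, and the sharp symbol bound is only genuinely required for the $\epsilon$-uniform coercivity, not for closedness or self-adjointness.
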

\begin{proof}
 Because of the decomposition $\mathcal{L}_1^\epsilon =  \kappa \Delta + \mathcal{B}^\epsilon$
with a linear and bounded operator $\mathcal{B}^\epsilon:L^2 \to L^2$, we obtain from \cite[Proposition 2.4.1]{lunardi} (i) that $\mathcal{L}_1^\epsilon$ is sectorial. In particular, there are $\lambda_1$ and $\lambda_2$ in $\rho(\mathcal{L}_1^\epsilon)$ such that $\text{Im}\lambda_1>0$ and $\text{Im}\lambda_2<0$. This combined with the fact that $\mathcal{L}_1^\epsilon$ is $L^2$-symmetric implies that $\mathcal{L}_1^\epsilon$ is self-adjoint. To prove the remaining statement, we define
\begin{align*}
 \mathcal{G}(u,v) = \langle-\mathcal{L}_1^\epsilon u , v\rangle
 %\\
 =& \kappa \int u'v' + \frac{1}{\epsilon} \int uv + \frac{1}{2} 
 \int uv
 - \frac{1}{2} \int \cos(2\theta_\epsilon) uv 
 \\
 &
 - \kappa \int (\theta'_\epsilon)^2 u v - \frac{1}{\epsilon} 
 \int \mathcal{S}_\epsilon [\cos \theta_\epsilon] \cos \theta_\epsilon \,u v - \frac{1}{\epsilon}
 \int\mathcal{S}_\epsilon [u] v
\end{align*}
for $u,v \in H^1$. Then $\mathcal{G} : H^1 \times H^1 \to \setR$ is well-defined, bilinear, and bounded. Moreover, we can estimate with the help of Lemma \ref{lem:apriori} as follows:
\begin{align*}
 \mathcal{G}(u,u) 
%  =& \kappa \int \abs{u'}^2 + \frac{1}{\epsilon} \int {\abs{u}^2} + \frac{1}{2} \int
%  \abs{u}^2 - \frac{1}{2} \int \cos(2\theta_\epsilon) \abs{u}^2 
%  \\
%  &- \kappa \int \abs{\theta'_\epsilon}^2
%  \abs{u}^2 - \frac{1}{\epsilon} \int \mathcal{S}_\epsilon [\cos \theta_\epsilon] \cos \theta_\epsilon \,
%  \abs{u}^2 - \frac{1}{\epsilon} \int \mathcal{S}_\epsilon [u] u
%  \\
 \ge& \kappa \int\abs{u'}^2 + \frac{1}{\epsilon} \int \abs{u}^2 - C \int \abs{u}^2 -
 \frac{1}{\epsilon} \int \sigma_\epsilon \abs{\widehat{u}}^2 \, .
\end{align*}
We obtain by applying Lemma \ref{lem:multiplier} and the Young inequality that
\begin{align*}
 \mathcal{G}(u,u) 
%  \ge& \kappa \int\abs{u'}^2 + \frac{1}{\epsilon} \int \abs{u}^2 - C \int \abs{u}^2
%  -\frac{1}{2\kappa} \int \abs{\widehat{u}}^2 - \frac{\kappa}{2} \int \abs{\xi}^2 \abs{\widehat{u}}^2
%  \\
 \ge \kappa \int\abs{u'}^2 + \frac{1}{\epsilon} \int \abs{u}^2 - C \int \abs{u}^2
 - \frac{1}{2\kappa} \int \abs{u}^2 - \frac{\kappa}{2} \int \abs{u'}^2
 %\\
 = \frac{\kappa}{2} \int\abs{u'}^2 + \frac{1}{\epsilon} \int \abs{u}^2 - C \int \abs{u}^2
\end{align*}
with some constant $C>0$ independent of $\epsilon>0$. In particular, $\mathcal{G}$ becomes coercive for $\epsilon$ small enough. Thanks to the Lax-Milgram theorem, we find for every $f \in L^2$ a unique element $u \in H^1$ such that $\mathcal{G}(u,v) = -(f,v)_{L^2}$ for all $v \in H^1$. This means that $u$ is the unique weak solution of $\mathcal{L}_1^\epsilon u = f$, and from here we directly obtain $u \in H^2$, thus $\mathcal{L}_1^\epsilon$ is invertible. The lemma is proved.
\end{proof}

\section{The linear operator $\mathcal{L}_2^\epsilon$}
\label{sec:NL2}
In this section we analyze the operator $\mathcal{L}_2^\epsilon$. As already remarked, $\mathcal{L}_2^\epsilon$ is the linearization of the rescaled Euler-Lagrange equation for the phase function at $\theta_\epsilon$. Due to the translation invariance of the energy functional, we expect $\mathcal{L}_2^\epsilon$ to have a kernel of at least dimension one. This is true as shown in the next lemma:
\begin{lem}
 \label{lem:L2 is sectorial}
 The linear operator $\mathcal{L}_2^\epsilon$ is sectorial and self-adjoint. Moreover, the function
 $\theta'_\epsilon$ belongs to the kernel of $\mathcal{L}_2^\epsilon$.
\end{lem}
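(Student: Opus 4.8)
The plan is to prove the three assertions in Lemma~\ref{lem:L2 is sectorial} in the same order they are stated, reusing the structural ideas from the proof of Lemma~\ref{lem:L1 is sectorial} for the first two and exploiting the translation invariance of $E_\text{res}^\epsilon$ for the third.

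First I would establish sectoriality. Just as for $\mathcal{L}_1^\epsilon$, I would write $\mathcal{L}_2^\epsilon = \kappa \Delta + \mathcal{C}^\epsilon$, where $\mathcal{C}^\epsilon v = \cos(2\theta_\epsilon) v - \tfrac{1}{\epsilon}\mathcal{S}_\epsilon[\sin\theta_\epsilon\, v]\sin\theta_\epsilon + \tfrac{1}{\epsilon}\mathcal{S}_\epsilon[\cos\theta_\epsilon]\cos\theta_\epsilon\, v$. The three summands are bounded operators $L^2 \to L^2$: the first because $\abs{\cos(2\theta_\epsilon)}\le 1$, the third by Lemma~\ref{lem:apriori}(iii), and the middle one because $v \mapsto \mathcal{S}_\epsilon[\sin\theta_\epsilon\, v]\sin\theta_\epsilon$ is a composition of multiplication by the bounded function $\sin\theta_\epsilon$ with the bounded operator $\mathcal{S}_\epsilon$ (Lemma~\ref{lem:reduced stray field}) — note the factor $\tfrac{1}{\epsilon}$ here is absorbed using Lemma~\ref{lem:multiplier}, which gives $\tfrac{1}{\epsilon}\norm{\mathcal{S}_\epsilon[w]}_{L^2} \le \norm{w}_{H^1}$, together with $\norm{\sin\theta_\epsilon\, v}_{H^1} \le C\norm{v}_{H^1}$; but since we only need boundedness $L^2\to L^2$ for fixed $\epsilon$ at this stage, even the crude bound $\tfrac1\epsilon\norm{\mathcal{S}_\epsilon}\le$ const suffices. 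Then \cite[Proposition~2.4.1]{lunardi} gives that $\mathcal{L}_2^\epsilon$ is sectorial.

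Next, self-adjointness: $\mathcal{L}_2^\epsilon$ is $L^2$-symmetric on $H^2$. Indeed $\kappa\Delta$ is symmetric, multiplication by the real function $\cos(2\theta_\epsilon)$ is symmetric, the term $\tfrac1\epsilon\mathcal{S}_\epsilon[\cos\theta_\epsilon]\cos\theta_\epsilon\, v$ is multiplication by a real $L^\infty$ function and hence symmetric, and for the middle term one computes $(\mathcal{S}_\epsilon[\sin\theta_\epsilon\, u]\sin\theta_\epsilon, v)_{L^2} = (\mathcal{S}_\epsilon[\sin\theta_\epsilon\, u], \sin\theta_\epsilon\, v)_{L^2} = (\sin\theta_\epsilon\, u, \mathcal{S}_\epsilon[\sin\theta_\epsilon\, v])_{L^2}$ using the symmetry of $\mathcal{S}_\epsilon$ from Lemma~\ref{lem:reduced stray field}, which is symmetric in $u,v$. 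Since $\mathcal{L}_2^\epsilon$ is sectorial, its resolvent set meets both the upper and lower half-planes, so a symmetric sectorial operator is self-adjoint — exactly the argument used for $\mathcal{L}_1^\epsilon$.

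Finally, $\theta'_\epsilon \in \ker\mathcal{L}_2^\epsilon$. The clean way is to differentiate the Euler-Lagrange equation \ref{eq:Euler-Lagrange-N}, $\kappa\theta_\epsilon'' + \tfrac12\sin(2\theta_\epsilon) + \tfrac1\epsilon\mathcal{S}_\epsilon[\cos\theta_\epsilon]\sin\theta_\epsilon = 0$, with respect to $x$. Using $\tfrac{d}{dx}\sin(2\theta_\epsilon) = 2\cos(2\theta_\epsilon)\theta_\epsilon'$, $\tfrac{d}{dx}\mathcal{S}_\epsilon[\cos\theta_\epsilon] = \mathcal{S}_\epsilon[(\cos\theta_\epsilon)'] = -\mathcal{S}_\epsilon[\sin\theta_\epsilon\,\theta_\epsilon']$ (the Fourier multiplier $\mathcal{S}_\epsilon$ commutes with $\tfrac{d}{dx}$), and $\tfrac{d}{dx}\sin\theta_\epsilon = \cos\theta_\epsilon\,\theta_\epsilon'$, one gets
\begin{align*}
 \kappa(\theta_\epsilon')'' + \cos(2\theta_\epsilon)\theta_\epsilon' - \frac{1}{\epsilon}\mathcal{S}_\epsilon[\sin\theta_\epsilon\,\theta_\epsilon']\sin\theta_\epsilon + \frac{1}{\epsilon}\mathcal{S}_\epsilon[\cos\theta_\epsilon]\cos\theta_\epsilon\,\theta_\epsilon' = 0 \,,
\end{align*}
which is precisely $\mathcal{L}_2^\epsilon\theta_\epsilon' = 0$. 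To make this rigorous one must check $\theta_\epsilon' \in H^2$, which follows from Lemma~\ref{lem:Nphasefunction}(ii) (indeed $\theta_\epsilon' \in H^k$ for all $k$), so the differentiation is justified and the identity holds in $L^2$. I do not anticipate a genuine obstacle here; the only point requiring a little care is the commutation of $\mathcal{S}_\epsilon$ with differentiation and the membership $\sin\theta_\epsilon\,\theta_\epsilon' \in H^1$ (so that $\tfrac1\epsilon\mathcal{S}_\epsilon$ applied to it lands in $L^2$ with a controlled norm), both of which are routine given the regularity in Lemma~\ref{lem:Nphasefunction}. The mildly delicate step overall is verifying the boundedness $L^2 \to L^2$ of the stray-field cross term $v\mapsto \mathcal{S}_\epsilon[\sin\theta_\epsilon\,v]\sin\theta_\epsilon$ with a factor $\tfrac1\epsilon$ in front, but as noted this is handled by Lemmas~\ref{lem:reduced stray field} and \ref{lem:multiplier}.
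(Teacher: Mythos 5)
Your proposal is correct and follows essentially the same route as the paper: the sectoriality and self-adjointness of $\mathcal{L}_2^\epsilon$ are obtained by decomposing into $\kappa\Delta$ plus a bounded $L^2$-perturbation exactly as for $\mathcal{L}_1^\epsilon$, and the kernel element $\theta'_\epsilon$ is produced by differentiating the Euler--Lagrange equation \ref{eq:Euler-Lagrange-N} and using the commutation of the Fourier multiplier $\mathcal{S}_\epsilon$ with $\tfrac{d}{dx}$. The only cosmetic difference is that the paper compresses the first two assertions to ``as in Lemma~\ref{lem:L1 is sectorial},'' whereas you spell out the boundedness of the zeroth-order part; for that, as you yourself note, the crude $\epsilon$-dependent bound $\tfrac1\epsilon\norm{\mathcal{S}_\epsilon} \le \tfrac1\epsilon$ is all that is needed, and the initial appeal to Lemma~\ref{lem:multiplier} (which gives an $H^1$-to-$L^2$ estimate, not $L^2$-to-$L^2$) is a red herring best dropped.
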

\begin{proof}
 As in Lemma \ref{lem:L1 is sectorial}, we see that  $\mathcal{L}_2^\epsilon$ 
 is sectorial and self-adjoint. We also know that $\theta_\epsilon$ is 
 smooth and 
 solves the Euler-Lagrange equation \ref{eq:Euler-Lagrange-N}.
% \begin{align*}
%  \kappa \theta''_\epsilon + \frac{1}{2} \sin (2 \theta_\epsilon) + \frac{1}{\epsilon} \mathcal{S}_\epsilon
%  [\cos \theta_\epsilon] \sin \theta_\epsilon =0 \, .
% \end{align*}
 Furthermore, we have thanks to Lemma \ref{lem:Nphasefunction} that $\cos \theta_\epsilon \in H^k$ for all $k \in \setN$, hence
 $\mathcal{S}_\epsilon[\cos \theta_\epsilon] \in H^k$ for all $k \in \setN$. The Sobolev embedding
 theorem implies smoothness of $\mathcal{S}_\epsilon[\cos \theta_\epsilon]$, and with the help of the Fourier
 transform, we obtain the identity
 \begin{align*}
  \big( \mathcal{S}_\epsilon [\cos \theta_\epsilon] \big) ' = \mathcal{S}_\epsilon [ (\cos \theta_\epsilon)' ]
  = -\mathcal{S}_\epsilon [ \sin \theta_\epsilon \, \theta'_\epsilon ] \, .
 \end{align*}
  We now differentiate the Euler-Lagrange equation \ref{eq:Euler-Lagrange-N} with respect to the space variable and obtain
 \begin{align*}
  0 &= \kappa \theta'''_\epsilon + \cos(2 \theta_\epsilon) \theta'_\epsilon -\frac{1}{\epsilon}
   \mathcal{S}_\epsilon
  [\sin \theta_\epsilon \, \theta'_\epsilon] \sin \theta_\epsilon + \frac{1}{\epsilon} \mathcal{S}_\epsilon 
  [\cos \theta_\epsilon] \cos \theta_\epsilon \, \theta'_\epsilon
  = \mathcal{L}_2^\epsilon \theta'_\epsilon \, .
 \end{align*}
 We already know that $\theta'_\epsilon \in H^2$, hence $\theta'_\epsilon \in N(\mathcal{L}_2^\epsilon)$. The
 lemma is proved.
\end{proof}
In the sequel we show that the kernel of $\mathcal{L}_2^\epsilon$ is 
actually one-dimensional and therefore given by $\text{span}\set{\theta'_\epsilon}$. For this we prove
a spectral gap estimate for $\mathcal{L}_2^\epsilon$ and follow the 
arguments presented in \cite{CMO}. Due to the reduction made for LLG in 
\cite{CMO}, the symbol of $\mathcal{S}_\epsilon$ differs from the 
one we have in our situation. However, this requires only minor changes. To keep our presentation self-contained, we have decided to repeat the proof here. We start by defining
\begin{align*}
 \mathcal{G}(u,v) = \langle-\mathcal{L}_2^\epsilon u,v\rangle
 %\\
 = \kappa \int u' v' -\int \cos (2 \theta_\epsilon) u v + \frac{1}{\epsilon} \int \mathcal{S}_\epsilon[
 \sin \theta_ \epsilon \, u] \sin \theta_\epsilon \, v 
 %\\
 - \frac{1}{\epsilon} \int \mathcal{S}_\epsilon [\cos \theta_\epsilon] \cos \theta_\epsilon \, uv
\end{align*}
and
 $\,\mathcal{H}(u,v) = \int \big( 1 + \epsilon^{-1} \sigma_\epsilon \big) \widehat{u} \, \overline{
 \widehat{v}}\,$
for all $u,v \in H^1$. The next lemma is the major step towards the spectral gap 
estimate and uses the rescaled Euler-Lagrange equation \ref{eq:Euler-Lagrange-N} together 
with a clever chosen test function.
\begin{lem}
\label{lem:estimate for G}
 For all $u \in H^1$ with $u(0)=0$, we have the estimate
\begin{align*}
 \mathcal{G}(u,u) \ge \kappa \norm{u \theta'_\epsilon}_{L^2}^2 + \mathcal{H}(u \sin \theta_\epsilon , u \sin \theta_\epsilon) \, .
\end{align*}
\end{lem}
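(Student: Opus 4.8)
The idea is to exploit the translation-invariance structure encoded in the Euler-Lagrange equation \ref{eq:Euler-Lagrange-N} by testing it against a suitable multiple of $u^2$, where $u\in H^1$ with $u(0)=0$. More precisely, the natural substitution, following \cite{CMO}, is to insert the test function $\phi = \frac{u^2}{\theta'_\epsilon}$ (formally) into the differentiated Euler-Lagrange equation $\mathcal{L}_2^\epsilon \theta'_\epsilon = 0$, or equivalently to compare $\mathcal{G}(u,u)$ with $\mathcal{G}(\psi \theta'_\epsilon, \psi \theta'_\epsilon)$ for $\psi = u/\theta'_\epsilon$. Because $\theta'_\epsilon$ lies in the kernel of $\mathcal{L}_2^\epsilon$, the quadratic form $\mathcal{G}$ evaluated on $\psi\theta'_\epsilon$ collapses dramatically, and after an integration by parts one is left with a manifestly nonnegative "Hardy-type" remainder involving $\kappa \int (\psi')^2 (\theta'_\epsilon)^2$ plus the stray-field contribution $\mathcal{H}(u\sin\theta_\epsilon, u\sin\theta_\epsilon)$. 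The condition $u(0)=0$ is exactly what is needed so that $\psi = u/\theta'_\epsilon$ makes sense near $x=0$, since $\theta'_\epsilon(0)>0$ by Lemma \ref{lem:Nphasefunction}(i) but $\theta'_\epsilon$ may decay at infinity.

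First I would justify rigorously that $\psi = u/\theta'_\epsilon$ is an admissible object: since $\theta'_\epsilon > 0$ on all of $\setR$ (it is nonnegative by Lemma \ref{lem:Nphasefunction}, strictly positive at $0$, and cannot vanish elsewhere by the ODE uniqueness argument already used in the text), $\psi$ is well-defined and locally $H^1$; the decay of $\theta'_\epsilon$ at $\pm\infty$ is compensated by an approximation argument or by working with the bilinear form directly. Then I would expand
\begin{align*}
 \mathcal{G}(u,u) = \kappa \int (u')^2 - \int \cos(2\theta_\epsilon) u^2 + \frac{1}{\epsilon}\int \mathcal{S}_\epsilon[\sin\theta_\epsilon\, u]\sin\theta_\epsilon\, u - \frac{1}{\epsilon}\int \mathcal{S}_\epsilon[\cos\theta_\epsilon]\cos\theta_\epsilon\, u^2 \, ,
\end{align*}
write $u = \psi\theta'_\epsilon$ so that $u' = \psi'\theta'_\epsilon + \psi\theta''_\epsilon$, and substitute. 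The term $\kappa\int(u')^2$ becomes $\kappa\int(\psi')^2(\theta'_\epsilon)^2 + 2\kappa\int\psi\psi'\theta'_\epsilon\theta''_\epsilon + \kappa\int\psi^2(\theta''_\epsilon)^2$. The middle cross term $2\kappa\int\psi\psi'\theta'_\epsilon\theta''_\epsilon = \kappa\int (\psi^2)' \theta'_\epsilon\theta''_\epsilon$ should be integrated by parts to produce $-\kappa\int\psi^2(\theta''_\epsilon)^2 - \kappa\int\psi^2\theta'_\epsilon\theta'''_\epsilon$, which cancels the last term above and leaves $-\kappa\int\psi^2\theta'_\epsilon\theta'''_\epsilon$.

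The heart of the computation is then to recognize that $\kappa\theta'''_\epsilon$ can be eliminated using the differentiated Euler-Lagrange equation derived in Lemma \ref{lem:L2 is sectorial}, namely $\kappa\theta'''_\epsilon = -\cos(2\theta_\epsilon)\theta'_\epsilon + \frac{1}{\epsilon}\mathcal{S}_\epsilon[\sin\theta_\epsilon\,\theta'_\epsilon]\sin\theta_\epsilon - \frac{1}{\epsilon}\mathcal{S}_\epsilon[\cos\theta_\epsilon]\cos\theta_\epsilon\,\theta'_\epsilon$. Substituting this into $-\kappa\int\psi^2\theta'_\epsilon\theta'''_\epsilon$ and collecting with the remaining terms of $\mathcal{G}(u,u)$, the pointwise potential terms $-\int\cos(2\theta_\epsilon)u^2$ and $-\frac{1}{\epsilon}\int\mathcal{S}_\epsilon[\cos\theta_\epsilon]\cos\theta_\epsilon\,u^2$ exactly cancel, and one is left with $\kappa\int(\psi')^2(\theta'_\epsilon)^2$ together with stray-field terms. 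The main obstacle — and the one requiring the "clever test function" care — is handling the two stray-field terms $\frac{1}{\epsilon}\int\mathcal{S}_\epsilon[\sin\theta_\epsilon\,u]\sin\theta_\epsilon\,u$ and the piece $\frac{1}{\epsilon}\int\mathcal{S}_\epsilon[\sin\theta_\epsilon\,\theta'_\epsilon]\sin\theta_\epsilon\,\psi^2\theta'_\epsilon$ coming from the $\theta'''_\epsilon$ substitution: using the symmetry of $\mathcal{S}_\epsilon$ (Lemma \ref{lem:reduced stray field}) and the algebraic identity $\mathcal{S}_\epsilon[\sin\theta_\epsilon\,u]\sin\theta_\epsilon\,u + \mathcal{S}_\epsilon[\sin\theta_\epsilon\,\theta'_\epsilon]\sin\theta_\epsilon\,\psi^2\theta'_\epsilon - 2\mathcal{S}_\epsilon[\sin\theta_\epsilon\,\theta'_\epsilon]\sin\theta_\epsilon\,\psi u$ integrates, after the substitution $u=\psi\theta'_\epsilon$, precisely to a form expressible through $\mathcal{S}_\epsilon$ acting on $u\sin\theta_\epsilon - (\text{something})$; one must verify that the leftover combines into exactly $\frac{1}{\epsilon}\int\sigma_\epsilon|\widehat{u\sin\theta_\epsilon}|^2 \le \mathcal{H}(u\sin\theta_\epsilon,u\sin\theta_\epsilon)$ minus a perfect square that one discards. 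Finally, since $\mathcal{H}(u\sin\theta_\epsilon, u\sin\theta_\epsilon) = \int(1+\epsilon^{-1}\sigma_\epsilon)|\widehat{u\sin\theta_\epsilon}|^2 \ge \frac{1}{\epsilon}\int\sigma_\epsilon|\widehat{u\sin\theta_\epsilon}|^2$ is automatically nonnegative, discarding $\|u\sin\theta_\epsilon\|_{L^2}^2$ only weakens the bound in the right direction, and noting $\kappa\int(\psi')^2(\theta'_\epsilon)^2 \ge 0$ is not quite what we want — rather the claimed bound keeps it — I would retain $\kappa\|u\theta'_\epsilon\|_{L^2}^2$ by a further Hardy/rearrangement step or simply observe the statement as written follows once the stray-field bookkeeping is closed. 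The expected main difficulty is purely this stray-field algebra: tracking which quadratic-in-$\mathcal{S}_\epsilon$ terms cancel and which survive as a nonnegative remainder, since the nonlocality of $\mathcal{S}_\epsilon$ prevents naive pointwise manipulation and forces one to work on the Fourier side with the symbol $\sigma_\epsilon$.
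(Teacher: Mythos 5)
Your plan is not the paper's argument, and more importantly it does not close. Let me spell out where the gap is.

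You propose to write $u=\psi\theta'_\epsilon$, expand $\kappa\int(u')^2$, integrate by parts, and then eliminate $\kappa\theta'''_\epsilon$ from the differentiated Euler--Lagrange equation $\mathcal{L}_2^\epsilon\theta'_\epsilon=0$. Carrying this out carefully gives
\begin{align*}
\mathcal{G}(u,u)=\kappa\int(\psi')^2(\theta'_\epsilon)^2 \;+\;\frac{1}{\epsilon}\int\mathcal{S}_\epsilon[u\sin\theta_\epsilon]\,u\sin\theta_\epsilon\;-\;\frac{1}{\epsilon}\int\mathcal{S}_\epsilon[\theta'_\epsilon\sin\theta_\epsilon]\,\psi^2\theta'_\epsilon\sin\theta_\epsilon\,.
\end{align*}
Compared with the claimed bound $\kappa\norm{u\theta'_\epsilon}_{L^2}^2+\mathcal{H}(u\sin\theta_\epsilon,u\sin\theta_\epsilon)$ you are missing the $\norm{u\sin\theta_\epsilon}_{L^2}^2$ piece of $\mathcal{H}$ entirely, the term $\kappa\int(\psi')^2(\theta'_\epsilon)^2$ does not dominate $\kappa\int u^2(\theta'_\epsilon)^2$ in any obvious way, and the leftover cross term $-\frac{1}{\epsilon}\int\mathcal{S}_\epsilon[\theta'_\epsilon\sin\theta_\epsilon]\psi^2\theta'_\epsilon\sin\theta_\epsilon$ has no sign. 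This is not a matter of ``stray-field bookkeeping'' that one can expect to close with more algebra --- the test function $u^2/\theta'_\epsilon$ is structurally mismatched to the nonlocal term. The paper instead first splits $\cos(2\theta_\epsilon)=\cos^2\theta_\epsilon-\sin^2\theta_\epsilon$ to write
$\mathcal{G}(u,u)=\kappa\int|u'|^2-\mathcal{H}(\cos\theta_\epsilon\,u^2,\cos\theta_\epsilon)+\mathcal{H}(\sin\theta_\epsilon\,u,\sin\theta_\epsilon\,u)$,
keeps the last (manifestly ``good'') term as is --- this is where both $\norm{u\sin\theta_\epsilon}_{L^2}^2$ and the $\sigma_\epsilon$-term come from --- and then tests the \emph{undifferentiated} Euler--Lagrange equation \ref{eq:Euler-Lagrange-N} against $v=u^2\cot\theta_\epsilon$. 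The cotangent cancels exactly the $\sin\theta_\epsilon$ sitting in the nonlocal term of \ref{eq:Euler-Lagrange-N}, so that testing produces precisely $\mathcal{H}(\cos\theta_\epsilon\,u^2,\cos\theta_\epsilon)=\kappa\int\theta'_\epsilon v'$, and a single application of Young's inequality to $\kappa\int|u'|^2-\kappa\int\theta'_\epsilon v'$ yields $\kappa\int|\theta'_\epsilon|^2u^2$. No surviving nonlocal cross term, no Hardy inequality between $\psi'$ and $\psi$.

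A secondary issue: your substitution requires $\theta'_\epsilon>0$ on all of $\setR$, but Lemma \ref{lem:Nphasefunction} only gives $\theta'_\epsilon\ge 0$ and $\theta'_\epsilon(0)>0$; the ODE uniqueness argument in the text is applied at $x=0$ where $\theta_\epsilon(0)=0$ and does not straightforwardly transfer to other points because the equation contains the nonlocal coefficient $\mathcal{S}_\epsilon[\cos\theta_\epsilon]$. The paper's choice avoids this entirely: $\cot\theta_\epsilon$ is controlled because $\theta_\epsilon$ vanishes only at $0$, which is what the hypothesis $u(0)=0$ and the $u_\delta$-truncation are designed to exploit.

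So: your plan uses a different test function against a different form of the Euler--Lagrange equation, and it has a genuine gap --- the nonlocal remainder does not collapse and two required positive terms never appear. The paper's proof is the cleaner route.
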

\begin{proof}
 First, we assume that $u=0$ in a neighborhood of $0$ and rewrite $\mathcal{G}(u,u)$ with the help of 
 the identity
  $\cos(2\theta_\epsilon) = \cos^2 \theta_\epsilon - \sin^2 \theta_\epsilon$
 in the following
 way:
 \begin{align*}
  \mathcal{G}(u,u) 
%   & \kappa \int \abs{u'}^2 - \int \cos(2\theta_\epsilon) \abs{u}^2 - \frac{1}{\epsilon}
%   \int \mathcal{S}_\epsilon [\cos \theta_\epsilon ] \cos \theta_\epsilon \, \abs{u}^2 
%   \\
%   &+ \frac{1}{\epsilon}
%   \int \mathcal{S}_\epsilon [ \sin \theta_\epsilon \, u] \sin \theta_\epsilon \, u
%   \\
  =& \kappa \int \abs{u'}^2 - \int \cos \theta_\epsilon \, \abs{u}^2 \big(\cos \theta_\epsilon + \epsilon^{-1}
  \mathcal{S}_\epsilon [\cos \theta_\epsilon] \big)
  %\\
  + \int \sin \theta_\epsilon \, u \big( \sin \theta_\epsilon \, u + \epsilon^{-1} \mathcal{S}_\epsilon
  [\sin \theta_\epsilon \, u] \big)
  \\
%   =& \kappa \int \abs{u'}^2 - \int \big(1 + \epsilon^{-1} \sigma_\epsilon \big) \mathcal{F}( \cos
%   \theta_\epsilon \, \abs{u}^2) \, \overline{\mathcal{F}(\cos \theta_\epsilon)}
%   %\\
%   + \int \big( 1 + \epsilon^{-1} \sigma_\epsilon \big) \mathcal{F}(\sin \theta_\epsilon \, u) \,
%  \overline{ \mathcal{F}( \sin 
%   \theta_\epsilon \, u)}
%   \\
  =& \kappa \int \abs{u'}^2 - \mathcal{H}(\cos \theta_\epsilon \, \abs{u}^2, \cos \theta_\epsilon) +
 \mathcal{H}( \sin \theta_\epsilon \, u, \sin \theta_\epsilon \, u) \, .
 \end{align*}
 In order to estimate the first two terms on the right hand side, we rewrite the weak form of
 the rescaled Euler-Lagrange equation \ref{eq:Euler-Lagrange-N} for the phase function in a similar way as above. To be more precise, we have
 \begin{align*}
  0&= -\kappa \int \theta'_\epsilon v' + \frac{1}{2} \int \sin (2 \theta_\epsilon) v + \frac{1}{\epsilon} 
  \int \mathcal{S}_\epsilon [ \cos \theta_\epsilon ] \sin \theta_\epsilon \, v
  %\\
  = - \kappa \int \theta'_\epsilon v' + \mathcal{H}( \sin \theta_\epsilon \, v ,\cos \theta_\epsilon ) \,
 \end{align*}
 for all $v \in H^1$. Since $\theta_\epsilon (x) = 0$ if and only if $x = 0$ (see Lemma \ref{lem:Nphasefunction}), we can introduce the test
 function $v = u^2 \cot \theta_\epsilon \in H^1$. Inserting leads to
 \begin{align*}
  \mathcal{H}(\cos \theta_\epsilon \, u^2, \cos \theta_\epsilon ) 
%   &= \kappa \int \theta'_\epsilon ( u^2 \cot
%   \theta_\epsilon)'
%   \\
%   &= \kappa \int \theta'_\epsilon \big(2 u u' \cot \theta_\epsilon - \theta'_\epsilon u^2(1+\cot^2 \theta_\epsilon) 
%   \big)
%   \\
  &= 2 \kappa \int u u' \cot \theta_\epsilon \, \theta'_\epsilon - \kappa \int \abs{\theta'_\epsilon}^2 u^2 
  ( 1 + \cot^2 \theta_\epsilon) \,.
 \end{align*}
 This together with the Young inequality implies
 \begin{align*}
  \kappa \hspace{-0.1cm}\int \abs{u'}^2 - \hspace{-0.1cm}\mathcal{H}(\cos \theta_\epsilon \, \abs{u}^2, \cos \theta_\epsilon ) 
%   \\
%   =&
%   \kappa \hspace{-0.1cm}\int \abs{u'}^2 + \hspace{-0.1cm}\kappa \hspace{-0.1cm}\int \abs{\theta'_\epsilon}^2 \abs{u}^2 + \hspace{-0.1cm}\kappa \hspace{-0.1cm}\int \abs{\theta'_\epsilon}^2 \abs{u}^2
%   \cot^2 \theta_\epsilon - \hspace{-0.1cm}2 \kappa \hspace{-0.1cm}\int u u' \cot \theta_\epsilon \, \theta'_\epsilon
   %\\
  \ge& \kappa \hspace{-0.1cm}\int \abs{u'}^2 + \hspace{-0.1cm}\kappa \hspace{-0.1cm}\int \abs{\theta'_\epsilon}^2 \abs{u}^2 + \hspace{-0.1cm}\kappa \hspace{-0.1cm}\int \abs{\theta'_\epsilon}^2
  \abs{u}^2 \cot^2 \theta_\epsilon - \hspace{-0.1cm}\kappa \hspace{-0.1cm}\int \abs{u'}^2 - \hspace{-0.1cm} \kappa \hspace{-0.1cm}\int \abs{u}^2 \abs{\theta'_\epsilon}^2 \cot^2
  \theta_\epsilon
  \\
  =& \kappa \hspace{-0.1cm}\int \abs{\theta'_\epsilon}^2 \abs{u}^2 \, .
 \end{align*}
 A combination of the above estimates yields
 \begin{align*}
  \mathcal{G}(u,u) \ge \kappa \int \abs{\theta'_\epsilon}^2 \abs{u}^2 + \mathcal{H} 
 (\sin \theta_\epsilon \, u,\sin \theta_\epsilon \, u)
 \end{align*}
 for all $u \in H^1$ with $u=0$ in a neighborhood of $0$. To complete the proof, let $u \in H^1$
 with $u(0)=0$ be given and define for $\delta >0$ the truncated function $u_\delta$ by  $u_\delta(x)=u(x- \delta)$ if $x \ge \delta$, $u_\delta (x)=0$ if 
 $-\delta <x< \delta$, and $u_\delta (x) = u(x+\delta)$ if $x \le -\delta$.
%  \begin{align*}
%   u_\delta (x) =
%   \left\{
%   \begin{array}{ccl}
%    u(x- \delta) & \,\,\text{if} &x \ge \delta
%    \\
%    0 & \,\,\text{if} &-\delta <x< \delta
%    \\
%    u(x+\delta) & \,\,\text{if} &x \le -\delta
%   \end{array}
%   \right.
%   \quad \in H^1 \, .
%  \end{align*}
 Since $u_\delta \to u$ in $H^1$ for $\delta \to 0$, the lemma follows from the previous inequality by means of approximation.
\end{proof}
We are now in a position to prove the spectral gap estimate for 
$\mathcal{L}_2^\epsilon$.
\begin{lem}
 \label{lem:L2 is coerciv}
 There is a constant $C=C(\epsilon)>0$ such that
 %\begin{align*}
  $\mathcal{G}(u,u) \ge C \norm{u}_{L^2}^2$
 %\end{align*}
 for all $u \in H^1$ with $u\perp \theta'_\epsilon$.
\end{lem}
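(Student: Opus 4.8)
The plan is to establish the spectral gap estimate by a compactness/contradiction argument built on top of Lemma~\ref{lem:estimate for G}. The key point is that Lemma~\ref{lem:estimate for G} gives a lower bound on $\mathcal{G}(u,u)$ only for functions $u$ vanishing at the origin, so the first step is to reduce the general case $u \perp \theta'_\epsilon$ to that situation. Given $u \in H^1$ with $u \perp \theta'_\epsilon$, I would split $u = u(0)\, \eta + w$, where $\eta$ is a fixed smooth cutoff function with $\eta(0)=1$ and compact support, so that $w(0)=0$. Then expand $\mathcal{G}(u,u)$ bilinearly; the diagonal term $\mathcal{G}(w,w)$ is controlled below by Lemma~\ref{lem:estimate for G}, and the remaining terms involving $u(0)$ must be absorbed. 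Here one uses that $u(0)$ is controlled: since $u \perp \theta'_\epsilon$ and $\theta'_\epsilon(0)>0$ with $\theta'_\epsilon \in H^1 \hookrightarrow L^\infty$ rapidly decaying, a nonzero value $u(0)$ forces $w$ to carry a definite ``mass'' against $\theta'_\epsilon$, i.e. $|u(0)|$ is bounded by $C\norm{w}_{L^2}$ (expand $0 = (u,\theta'_\epsilon)_{L^2} = u(0)(\eta,\theta'_\epsilon)_{L^2} + (w,\theta'_\epsilon)_{L^2}$, and note $(\eta,\theta'_\epsilon)_{L^2}\neq 0$ can be arranged, or more robustly $|u(0)| \le C\norm{u}_{H^1}$ via trace and then iterate).

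The second step is the contradiction argument itself. Suppose the estimate fails: then there is a sequence $u_n \in H^1$ with $u_n \perp \theta'_\epsilon$, $\norm{u_n}_{L^2}=1$, and $\mathcal{G}(u_n,u_n) \to \inf \le 0$. Since $\mathcal{G}$ differs from $\kappa\norm{u'}_{L^2}^2 + (\text{lower order bounded terms})$ and we already have from Lemma~\ref{lem:estimate for G} applied to the shifted part that the $\dot H^1$-seminorm stays bounded, the sequence $(u_n)$ is bounded in $H^1$. Passing to a subsequence, $u_n \rightharpoonup u$ weakly in $H^1$. The terms in $\mathcal{G}$ involving $\cos\theta_\epsilon$, $\sin\theta_\epsilon$, and the stray-field operator $\mathcal{S}_\epsilon$ are compact-type perturbations: $\cos\theta_\epsilon = m_1^\epsilon \in L^2$ and $\theta'_\epsilon \in L^2$ decay, so multiplication by these provides local-to-global compactness allowing passage to the limit in those quadratic terms; meanwhile $\kappa\norm{u'}_{L^2}^2 + \epsilon^{-1}\norm{u}_{L^2}^2$ is weakly lower semicontinuous. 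One concludes $\mathcal{G}(u,u) \le \liminf \mathcal{G}(u_n,u_n) \le 0$ and $u \perp \theta'_\epsilon$, and also $\norm{u}_{L^2}=1$ (the unit-mass constraint survives because of the compactness of the embedding $H^1_{\mathrm{loc}}$ combined with the nonnegative definite ``mass-producing'' structure — this needs care, see below). Then $u$ is a minimizer of $\mathcal{G}$ subject to $\norm{u}_{L^2}=1$, $u\perp\theta'_\epsilon$, with $\mathcal{G}(u,u)\le 0$.

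The third step uses Lemma~\ref{lem:estimate for G} to rule this out. Write $u = u(0)\eta + w$ as before. If $u(0)=0$, then directly $\mathcal{G}(u,u) = \mathcal{G}(w,w) \ge \kappa\norm{w\theta'_\epsilon}_{L^2}^2 + \mathcal{H}(w\sin\theta_\epsilon, w\sin\theta_\epsilon) \ge 0$, with equality forcing $w\theta'_\epsilon \equiv 0$ and $w\sin\theta_\epsilon \equiv 0$; since $\theta'_\epsilon>0$ (Lemma~\ref{lem:Nphasefunction}(i), and $\theta'_\epsilon>0$ everywhere follows from the sign of $\theta'_\epsilon(0)$ and the ODE/strong maximum principle), we get $w\equiv 0$, hence $u\equiv 0$, contradicting $\norm{u}_{L^2}=1$. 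If $u(0)\neq 0$, I would instead apply the minimizer's Euler–Lagrange equation: $u$ solves $-\mathcal{L}_2^\epsilon u = \mu u + \nu\theta'_\epsilon$ for Lagrange multipliers $\mu = \mathcal{G}(u,u)\le 0$ and $\nu$; pairing with $\theta'_\epsilon$ and using $\mathcal{L}_2^\epsilon\theta'_\epsilon = 0$ (Lemma~\ref{lem:L2 is sectorial}) and self-adjointness gives $\nu\norm{\theta'_\epsilon}_{L^2}^2 = 0$, so $\nu = 0$ and $-\mathcal{L}_2^\epsilon u = \mu u$. Thus $u$ is an eigenfunction of the self-adjoint operator $\mathcal{L}_2^\epsilon$ with eigenvalue $-\mu \ge 0$. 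Standard Sturm–Liouville / ground-state reasoning applies: $\theta'_\epsilon > 0$ is a nowhere-vanishing element of the kernel, hence it is the ground state, so $0 = \min\sigma(\mathcal{L}_2^\epsilon)$ and the corresponding eigenspace is simple; therefore any eigenvalue $-\mu\ge 0$ of $\mathcal{L}_2^\epsilon$ with eigenfunction orthogonal to $\theta'_\epsilon$ must satisfy $-\mu > 0$, i.e. $\mu<0$ — but then $\mathcal{G}(u,u)=\mu<0$ with $u$ an eigenfunction orthogonal to the ground state is impossible since the next eigenvalue is $\ge 0$; more directly, orthogonality to the positive ground state forces $-\mu \ge \lambda_1 > 0$ where $\lambda_1$ is the second eigenvalue, contradicting $-\mu = -\mathcal{G}(u,u)\ge 0$ only if $\mathcal{G}(u,u)<0$ which is exactly the assumed failure — so we reach a contradiction.

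The main obstacle I anticipate is the compactness step: showing that the $L^2$-unit-mass is not lost in the weak limit $u_n \rightharpoonup u$. The quadratic form $\mathcal{G}$ controls $\kappa\norm{u'}_{L^2}^2 + \epsilon^{-1}\norm{u}_{L^2}^2$ minus terms that are genuinely bounded (by Lemma~\ref{lem:apriori}), so boundedness in $H^1$ is automatic; the subtlety is that $H^1(\setR)\hookrightarrow L^2(\setR)$ is \emph{not} compact, so mass could escape to infinity. The resolution is that the ``dangerous'' negative contributions in $\mathcal{G}$ — namely the $\cos(2\theta_\epsilon)$ term and the stray-field terms — are all localized (their coefficients lie in $L^2$ or are given by $\mathcal{S}_\epsilon$ applied to $L^2$-decaying functions), so a concentration-compactness dichotomy shows that a vanishing or dichotomy scenario would give $\mathcal{G}(u_n,u_n) \ge \epsilon^{-1}\norm{u_n}_{L^2}^2 - o(1) = \epsilon^{-1} - o(1) > 0$, contradicting $\mathcal{G}(u_n,u_n)\to\inf\le 0$; hence compactness holds and $\norm{u}_{L^2}=1$. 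This is where following the argument of \cite{CMO} pays off, and it is the step that genuinely needs the decay properties from Lemma~\ref{lem:apriori} and Lemma~\ref{lem:Nphasefunction} rather than soft functional analysis.
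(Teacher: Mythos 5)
Your approach (contradiction via a minimizing sequence and concentration--compactness) is genuinely different from the paper's, and it contains concrete errors. The paper argues directly: for $u\perp\theta'_\epsilon$ set $v=u-t\theta'_\epsilon$ with $t=u(0)/\theta'_\epsilon(0)$, so that $v(0)=0$. Because $\theta'_\epsilon\in N(\mathcal{L}_2^\epsilon)$, the cross terms in the bilinear form vanish, giving $\mathcal{G}(v,v)=\mathcal{G}(u,u)$ \emph{exactly}. Lemma~\ref{lem:estimate for G} applied to $v$ then yields
$\mathcal{G}(u,u)\ge\min\{\kappa,1\}\int(|\theta'_\epsilon|^2+\sin^2\theta_\epsilon)\,|v|^2$,
and the integrand is bounded below by a positive constant because $\theta'_\epsilon$ is continuous with $\theta'_\epsilon(0)>0$ (controls a neighbourhood of $0$) while $\sin^2\theta_\epsilon$ is bounded away from zero outside any such neighbourhood, by monotonicity of $\theta_\epsilon$. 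Finally, orthogonality gives $\norm{v}_{L^2}^2=\norm{u}_{L^2}^2+t^2\norm{\theta'_\epsilon}_{L^2}^2\ge\norm{u}_{L^2}^2$. This yields an explicit constant $C(\epsilon)$ in two lines and needs no compactness.

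Regarding the gaps in your argument. (1) Your decomposition $u=u(0)\eta+w$ with a cutoff $\eta$ is strictly worse than $u=u(0)\theta'_\epsilon/\theta'_\epsilon(0)+v$: the latter kills the cross terms automatically because $\theta'_\epsilon$ is in the kernel, whereas with a generic $\eta$ you have uncontrolled off-diagonal contributions that you acknowledge ``must be absorbed'' but never actually do. (2) In your compactness step you claim $\mathcal{G}(u_n,u_n)\ge\epsilon^{-1}\norm{u_n}_{L^2}^2-o(1)$. This is false: unlike the form for $\mathcal{L}_1^\epsilon$, the form $\mathcal{G}$ for $-\mathcal{L}_2^\epsilon$ contains no $\epsilon^{-1}\int|u|^2$ term. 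The correct tightness mechanism is that $-\cos(2\theta_\epsilon)\to 1$ as $|x|\to\infty$ (since $\theta_\epsilon\to\pm\pi/2$), which gives a lower bound of order $1$, not $\epsilon^{-1}$, for mass escaping to infinity. With that correction and a careful Brezis--Lieb splitting the compactness argument can be repaired, but it is considerably more work than what you sketch. (3) Your third step invokes a Sturm--Liouville/ground-state principle (``nowhere-vanishing kernel element, hence simple ground state, hence spectral gap'') for $\mathcal{L}_2^\epsilon$. But $\mathcal{L}_2^\epsilon$ contains the nonlocal operator $\mathcal{S}_\epsilon$, and the standard nodal/ground-state theory you are appealing to is formulated for local second-order ODEs; justifying it here would itself require an argument (indeed, proving precisely this spectral gap is what the lemma is about). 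You would also need to establish $\theta'_\epsilon>0$ everywhere, not merely $\theta'_\epsilon\ge 0$ and $\theta'_\epsilon(0)>0$, which you assert parenthetically but do not prove. All of these can be bypassed entirely by the paper's direct calculation.
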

\begin{proof}
 For $u \in H^1$ with $u\perp \theta'_\epsilon$, we consider $v = u - t \theta'_\epsilon \in H^1$ where
 $t = u(0)/\theta'_\epsilon(0)$ and remark that
 \begin{align*}
  \mathcal{G}(v,v) 
%   &= \mathcal{G}(u,u) - 2t \mathcal{G}(\theta'_\epsilon,u) + t^2
%   \mathcal{G}(\theta'_\epsilon,\theta'_\epsilon)
%   \\
  = \mathcal{G}(u,u) + 2 t (\mathcal{L}_2^\epsilon \theta'_\epsilon,u)_{L^2} - t^2 (\mathcal{L}_2^\epsilon
  \theta'_\epsilon, \theta'_\epsilon)_{L^2}
  %\\
  = \mathcal{G}(u,u)
 \end{align*}
 thanks to Lemma \ref{lem:L2 is sectorial}. Moreover, Lemma 
\ref{lem:estimate for G} together with the fact
 $v(0)=0$ implies
 \begin{align*}
  \mathcal{G}(u,u) 
%   &\ge \kappa \int \abs{v}^2 \abs{\theta'_\epsilon}^2 + \mathcal{H}(v \sin \theta_\epsilon,
%   v \sin \theta_\epsilon)
%   \\
  \ge \kappa \int \abs{v}^2 \abs{\theta'_\epsilon}^2 + \int \abs{v}^2 \sin^2 \theta_\epsilon
%  \\
  \ge \min \set{\kappa,1} \int \abs{v}^2 \big( \abs{\theta'_\epsilon}^2 + \sin^2 \theta_\epsilon \big)
%  \\
  \ge C(\epsilon) \int \abs{v}^2 \, .
 \end{align*}
 In the previous line, we have used that $\theta_\epsilon$ is nondecreasing and $\theta'_\epsilon (0)>0$ (see Lemma \ref{lem:Nphasefunction}). The assumption $u\perp \theta'_\epsilon$ implies the statement of the lemma.
\end{proof}
Next, the spectral gap estimate is used to determine the range of
$\mathcal{L}_2^\epsilon$.
\begin{lem}
\label{lem:L2 is invertible on the complement of y}
 For all $f \in L^2$ with $f \perp \theta'_\epsilon$ there exists a unique $u \in H^2$ with $u \perp \theta'_\epsilon$ such that $\mathcal{L}_2^\epsilon u = f$. Moreover, we have the a priori estimate $\norm{u}_{H^2} \le C \, \norm{f}_{L^2}$ with a constant $C=C(\epsilon)>0$.
\end{lem}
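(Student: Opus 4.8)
The plan is to run a Lax--Milgram argument for the symmetric bilinear form $\mathcal{G}$ on the closed subspace $V = \{ u \in H^1 : u \perp \theta'_\epsilon \}$ of $H^1$, to upgrade the resulting weak solution to a strong one by elliptic regularity, and finally to read off the $H^2$-estimate from coercivity together with the explicit shape of $\mathcal{G}$. Boundedness of $\mathcal{G}: H^1 \times H^1 \to \setR$ follows directly from its definition together with Lemma \ref{lem:reduced stray field} and the $L^\infty$-bounds for $\cos\theta_\epsilon$ and $\sin\theta_\epsilon$, while coercivity of $\mathcal{G}$ on $V$ is exactly Lemma \ref{lem:L2 is coerciv}. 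Hence, given $f \in L^2$ with $f \perp \theta'_\epsilon$, the functional $v \mapsto -(f,v)_{L^2}$ is bounded and linear on $V$, and Lax--Milgram produces a unique $u \in V$ with $\mathcal{G}(u,v) = -(f,v)_{L^2}$ for all $v \in V$.

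The next step is to extend this identity from $V$ to all of $H^1$. Since $H^1 = V \oplus \operatorname{span}\{\theta'_\epsilon\}$, it suffices to check the identity for $v = \theta'_\epsilon$. On the one hand $-(f,\theta'_\epsilon)_{L^2} = 0$ by assumption on $f$; on the other hand $\mathcal{G}$ is symmetric (each of its four terms is symmetric in $u,v$, using that $\mathcal{S}_\epsilon$ is $L^2$-symmetric), so $\mathcal{G}(u,\theta'_\epsilon) = \mathcal{G}(\theta'_\epsilon,u) = (-\mathcal{L}_2^\epsilon \theta'_\epsilon, u)_{L^2} = 0$ because $\theta'_\epsilon \in H^2 = D(\mathcal{L}_2^\epsilon)$ and $\mathcal{L}_2^\epsilon \theta'_\epsilon = 0$ by Lemma \ref{lem:L2 is sectorial}. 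Thus $\mathcal{G}(u,v) = -(f,v)_{L^2}$ for all $v \in H^1$, i.e.\ $u$ is a weak solution of $\mathcal{L}_2^\epsilon u = f$. Writing $\mathcal{L}_2^\epsilon = \kappa \Delta + \mathcal{B}$ with $\mathcal{B}$ linear and bounded on $L^2$ (as in the proof of Lemma \ref{lem:L1 is sectorial}), the weak equation gives $\kappa u'' = f - \mathcal{B}u \in L^2$, hence $u \in H^2$ and $\mathcal{L}_2^\epsilon u = f$ in $L^2$.

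For the a priori estimate, testing with $v = u$ and using coercivity gives $C(\epsilon)\norm{u}_{L^2}^2 \le \mathcal{G}(u,u) = -(f,u)_{L^2} \le \norm{f}_{L^2}\norm{u}_{L^2}$, so $\norm{u}_{L^2} \le C(\epsilon)\norm{f}_{L^2}$. Next, isolating the exchange term in the definition of $\mathcal{G}$ and bounding the remaining three terms by $C\norm{u}_{L^2}^2$ (again via Lemma \ref{lem:reduced stray field} and the pointwise bounds on $\theta_\epsilon$), one gets $\kappa\norm{u'}_{L^2}^2 \le \mathcal{G}(u,u) + C\norm{u}_{L^2}^2 \le \norm{f}_{L^2}\norm{u}_{L^2} + C\norm{u}_{L^2}^2 \le C(\epsilon)\norm{f}_{L^2}^2$, and then $\kappa\norm{u''}_{L^2} \le \norm{f}_{L^2} + \norm{\mathcal{B}u}_{L^2} \le C(\epsilon)\norm{f}_{L^2}$ from the equation; combining the three bounds yields $\norm{u}_{H^2} \le C(\epsilon)\norm{f}_{L^2}$. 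Uniqueness is immediate: if $u_1, u_2 \in H^2 \cap V$ both solve the equation, then $w = u_1 - u_2 \in V$ satisfies $\mathcal{L}_2^\epsilon w = 0$, so $\mathcal{G}(w,w) = 0$ and coercivity forces $w = 0$. There is no genuine obstacle in this argument beyond the spectral gap estimate already established in Lemma \ref{lem:L2 is coerciv}; the only point that needs care is the passage from test functions in $V$ to test functions in all of $H^1$, which hinges on the symmetry of $\mathcal{G}$ and on $\theta'_\epsilon \in N(\mathcal{L}_2^\epsilon)$.
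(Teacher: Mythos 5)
Your overall strategy is the same as the paper's: Lax--Milgram on the closed subspace $H^1_\perp = \{u \in H^1 : u \perp \theta'_\epsilon\}$, extend the weak identity from $H^1_\perp$ to all of $H^1$ using $\theta'_\epsilon \in N(\mathcal{L}_2^\epsilon)$ and symmetry of $\mathcal{G}$, upgrade to $H^2$ by elliptic regularity, and read off the estimate. The extension step is handled slightly differently (you test the decomposition $H^1 = H^1_\perp \oplus \operatorname{span}\{\theta'_\epsilon\}$ directly, the paper works with the $L^2$-orthogonal projection $P$ and the identity $\mathcal{G}(v,Pw)=\mathcal{G}(v,w)$) but these are equivalent, and the regularity and uniqueness arguments match.

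There is, however, one genuine gap in the way the Lax--Milgram step is justified. You write that ``coercivity of $\mathcal{G}$ on $V$ is exactly Lemma \ref{lem:L2 is coerciv},'' but Lemma \ref{lem:L2 is coerciv} only gives $\mathcal{G}(u,u) \ge C\norm{u}_{L^2}^2$. That is $L^2$-coercivity, not coercivity with respect to the norm of the Hilbert space $V = H^1_\perp$, which is the $H^1$-norm. Lax--Milgram cannot be invoked on $V$ without the stronger estimate $\mathcal{G}(u,u) \ge c\norm{u}_{H^1}^2$ for $u \in V$. The paper fills this gap explicitly: it writes $(1+\delta)\mathcal{G}(u,u) \ge C\norm{u}_{L^2}^2 + \delta\kappa\norm{u'}_{L^2}^2 - \delta\,\mathcal{H}(\cos\theta_\epsilon\,\abs{u}^2,\cos\theta_\epsilon) + \delta\,\mathcal{H}(\sin\theta_\epsilon\,u,\sin\theta_\epsilon\,u)$, bounds $\mathcal{H}(\cos\theta_\epsilon\,\abs{u}^2,\cos\theta_\epsilon) \le C\norm{u}_{L^2}^2$ using Lemma \ref{lem:apriori}~(iii), discards the nonnegative term $\mathcal{H}(\sin\theta_\epsilon\,u,\sin\theta_\epsilon\,u)\ge0$, and chooses $\delta$ small to obtain $H^1$-coercivity on $H^1_\perp$. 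You actually derive the very same ingredient later, when you isolate the exchange term and prove $\kappa\norm{u'}_{L^2}^2 \le \mathcal{G}(u,u) + C\norm{u}_{L^2}^2$, but you place it after the Lax--Milgram invocation. Logically it has to come first: combine that inequality with Lemma \ref{lem:L2 is coerciv} (take a convex combination) to get $H^1$-coercivity on $H^1_\perp$, and only then apply Lax--Milgram. With that reordering, the proposal is correct and matches the paper.
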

\begin{proof}
 First, we show uniqueness. Let therefore $u_1,u_2 \perp \theta'_\epsilon$ be given such that
 $\mathcal{L}_2^\epsilon u_1 = \mathcal{L}_2^\epsilon u_2 =f$. We find $\mathcal{G}(u_1 - u_2,v) = 0$ for all $v \in H^1$, and thanks to Lemma 
\ref{lem:L2 is coerciv}, we get
 \begin{align*}
  0= \mathcal{G}(u_1 - u_2, u_1 - u_2 ) \ge C \norm{u_1 - u_2}_{L^2}^2 \, ,
 \end{align*}
hence $u_1 = u_2$. Next, we show the existence of solutions for a given $f \in L^2$ with $f \perp
\theta'_\epsilon$ and define the space
%\begin{align*}
 $H^1_{\perp} = \set{u \in H^1 \, | \, u \perp \theta'_\epsilon \text{ in } L^2}$.
%\end{align*}
We remark that $H^1_{\perp}$ is a closed subspace of $H^1$ and therefore a Hilbert space. We consider the
restriction of $\mathcal{G}$ on $H^1_{\perp} \times H^1_{\perp}$, which is again bilinear, symmetric, and bounded. Moreover, we obtain with Lemma \ref{lem:L2 is coerciv} the estimate
\begin{align*}
 (1+\delta) \mathcal{G}(u,u) = 
 %\mathcal{G}(u,u) + \delta \mathcal{G}(u,u)
 %\\
 \ge& C \norm{u}_{L^2}^2 + \delta \kappa \norm{u'}_{L^2}^2 - \delta \mathcal{H}(\cos \theta_\epsilon \, \abs{u}^2,
 \cos \theta_\epsilon) + \delta \mathcal{H}( \sin \theta_\epsilon \, u , \sin \theta_\epsilon \, u) 
\end{align*}
for all $u \in H^1_{\perp}$ and some $\delta>0$ to be chosen below. Because of Lemmas \ref{lem:reduced stray field} and \ref{lem:Nphasefunction}, we have that
\begin{align*}
  \mathcal{H}(\cos \theta_\epsilon \, \abs{u}^2, \cos \theta_\epsilon) =
 \int \cos^2 \theta_\epsilon \, \abs{u}^2 + \frac{1}{\epsilon} \int \cos \theta_\epsilon \, \abs{u}^2 \mathcal{S}_\epsilon[\cos \theta_\epsilon] \le C \norm{u}_{L^2}^2 \,.
\end{align*}
Since $\sigma_\epsilon \ge 0$, we see $\mathcal{H}(\sin \theta_\epsilon \, u, \sin \theta_\epsilon \,u) \ge 0$ and therefore
$\mathcal{G}(u,u) \ge c \norm{u}_{H^1}^2$ for all 
$u \in H^1_{\perp}$, provided $\delta$ is chosen small enough. This means that $\mathcal{G}$ is coercive on $H^1_{\perp}$, and with the help of the Lax-Milgram theorem, we find a unique $u \in H^1_{\perp}$ such that
%\begin{align*}
 $\mathcal{G}(u,v) = \langle-\mathcal{L}_2^\epsilon u,v\rangle = (-f,v)_{L^2}$
%\end{align*}
for all $v \in H^1_{\perp}$ and $\norm{u}_{H^1}\le C\, \norm{f}_{L^2}$. In the sequel we prove that this actually holds for all $v \in H^1$ and define therefore the projection $P$ by
%\begin{align*}
%\label{eq:Projection-N}
 $Pv = v - (v,\theta'_\epsilon)_{L^2}/\norm{\theta'_\epsilon}_{L^2}^2 \,  \theta'_\epsilon = v - t(v) \theta'_\epsilon$
%\end{align*}
for all $v \in L^2$. The projection $P$ has the following properties: $Pv \perp \theta'_\epsilon$, $v \perp \theta'_\epsilon$ implies $Pv=v$, and $(Pv,w)_{L^2} = (v,Pw)_{L^2}$ for all $v,w \in L^2$.
Furthermore, it holds $\mathcal{G}(v,Pw) = \mathcal{G}(v,w)$ for all $v,w \in H^1$ since
\begin{align*}
 \mathcal{G}(v,Pw) &= 
 %\mathcal{G}(u,v - t(v) \theta'_\epsilon) = 
\mathcal{G}(v,w) - t(w) \mathcal{G}(v,\theta'_\epsilon) = \mathcal{G}(v,w) + t(w) (\mathcal{L}_2^\epsilon \theta'_\epsilon,v)_{L^2} =
 \mathcal{G}(v,w) \, .
\end{align*}
For $v \in H^1$ we have $Pv \in H^1_{\perp}$ and therefore
\begin{align*}
 %(-\mathcal{L}_2^\epsilon u,v ) 
 \mathcal{G}(u,v) = \mathcal{G}(u,Pv) = (-f,Pv)_{L^2} = (-Pf,v)_{L^2} = (-f,v)_{L^2}
\end{align*}
for all $v \in H^1$. We conclude $u \in H^2$, $\mathcal{L}_2^\epsilon u =f$, and $\norm{u}_{H^2} \le C \, \norm{f}_{L^2}$. The lemma is proved.
\end{proof}
Finally, we summarize the properties of the self-adjoint operator $\mathcal{L}_2^\epsilon$ in the following lemma.
\begin{lem}
\label{lem:properties of L2}
 The following statements for $\mathcal{L}_2^\epsilon$ hold true:
 \begin{enumerate}
  \item[(i)] $N(\mathcal{L}_2^\epsilon) = \text{span} \set{\theta'_\epsilon}$
  \item[(ii)] $R(\mathcal{L}_2^\epsilon) = L^2_{\perp} = \set{ f \in L^2 \, | \, f \perp \theta'_\epsilon}$
  \item[(iii)] $\mathcal{L}_2^\epsilon : H^2_{\perp} \to L^2_{\perp}$ is an isomorphism, where $H^2_\perp =
 \set{u \in H^2 \, | \, u \perp \theta'_\epsilon}$.
 \end{enumerate}
\end{lem}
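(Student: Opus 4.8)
The plan is to obtain all three assertions as essentially formal consequences of Lemmas \ref{lem:L2 is sectorial}, \ref{lem:L2 is coerciv}, and \ref{lem:L2 is invertible on the complement of y}, together with the self-adjointness of $\mathcal{L}_2^\epsilon$. Throughout I would use the $L^2$-orthogonal projection $P$ onto $\{\theta'_\epsilon\}^\perp$ introduced in the proof of Lemma \ref{lem:L2 is invertible on the complement of y}, noting that it maps $H^k$ into $H^k$ because $\theta'_\epsilon \in H^k$ for all $k$.

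For (i), the inclusion $\operatorname{span}\{\theta'_\epsilon\} \subset N(\mathcal{L}_2^\epsilon)$ is Lemma \ref{lem:L2 is sectorial}. For the reverse inclusion I would take $u \in H^2$ with $\mathcal{L}_2^\epsilon u = 0$ and decompose $u = Pu + t(u)\,\theta'_\epsilon$ with $Pu \perp \theta'_\epsilon$ and $Pu \in H^2$. Since $\mathcal{L}_2^\epsilon \theta'_\epsilon = 0$, also $\mathcal{L}_2^\epsilon(Pu) = 0$, hence $\mathcal{G}(Pu,Pu) = \langle -\mathcal{L}_2^\epsilon(Pu), Pu\rangle = 0$; the spectral gap estimate of Lemma \ref{lem:L2 is coerciv}, applicable because $Pu \in H^1$ and $Pu \perp \theta'_\epsilon$, then forces $Pu = 0$, so $u \in \operatorname{span}\{\theta'_\epsilon\}$.

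For (ii), the inclusion $R(\mathcal{L}_2^\epsilon) \subset L^2_\perp$ follows from self-adjointness and $\theta'_\epsilon \in N(\mathcal{L}_2^\epsilon)$: for $u \in H^2$ one has $(\mathcal{L}_2^\epsilon u, \theta'_\epsilon)_{L^2} = (u, \mathcal{L}_2^\epsilon \theta'_\epsilon)_{L^2} = 0$. The reverse inclusion $L^2_\perp \subset R(\mathcal{L}_2^\epsilon)$ is precisely the existence statement of Lemma \ref{lem:L2 is invertible on the complement of y}. For (iii), the same computation shows $\mathcal{L}_2^\epsilon$ maps $H^2_\perp$ into $L^2_\perp$; it is bounded as a map $H^2 \to L^2$ directly from its definition, injective on $H^2_\perp$ by (i) (equivalently, by the uniqueness in Lemma \ref{lem:L2 is invertible on the complement of y}), surjective onto $L^2_\perp$ by (ii), and has bounded inverse by the a priori estimate $\norm{u}_{H^2} \le C\norm{f}_{L^2}$ of Lemma \ref{lem:L2 is invertible on the complement of y}. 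Hence it is a Banach space isomorphism between the closed subspaces $H^2_\perp \subset H^2$ and $L^2_\perp \subset L^2$.

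I do not expect a serious obstacle here, since this lemma merely repackages the earlier results; the only points requiring a little care are keeping both summands of the orthogonal decomposition in (i) inside $H^2$, and invoking self-adjointness correctly to pin down the range in (ii) before using it to make sense of the domain and target in (iii).
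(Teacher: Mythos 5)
Your proposal is correct and is precisely the implicit argument the paper intends: the lemma is stated in the source as a summary of Lemmas \ref{lem:L2 is sectorial}, \ref{lem:L2 is coerciv}, and \ref{lem:L2 is invertible on the complement of y} with no separate proof, and your deduction of (i) from the spectral gap estimate applied to $Pu$, of (ii) from self-adjointness together with the solvability lemma, and of (iii) from uniqueness plus the a priori bound $\norm{u}_{H^2}\le C\norm{f}_{L^2}$, is exactly how those pieces fit together.
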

% \begin{proof}
%  From Lemma \ref{lem:L2 is sectorial} we know that $\text{span} \set{\theta'_\epsilon} \subset
%  N(\mathcal{L}_2^\epsilon)$. Let now $u \in H^2$ with $\mathcal{L}_2^\epsilon u = 0$ be given. We
%  use the decomposition $u=u_1+u_2$, where
% %\begin{align*}
% % u = u_1 + u_2 = Pu + (I-P)u
% %\end{align*}
% %induced by the projection P from \eqref{eq:Projection-N}. In particular, $u_1,u_2$
%   \begin{align*}
%    u_1 = u - \frac{(u,\theta'_\epsilon)_{L^2}}{\norm{\theta'_\epsilon}_{L^2}^2} \theta'_\epsilon \qquad \text{and} 
%    \qquad u_2 =
%    \frac{(u,\theta'_\epsilon)_{L^2}}{\norm{\theta'_\epsilon}_{L^2}^2} \theta'_\epsilon
%   \end{align*}
%  belong to $H^2$ and $u_1 \perp \theta'_\epsilon$ in $L^2$. We find
%  \begin{align*}
%   0 = (-\mathcal{L}_2^\epsilon u, u_1)_{L^2} = (-\mathcal{L}_2^\epsilon u_1 , u_1)_{L^2} = \mathcal{G}(u_1,u_1)
%  \ge C \norm{u_1}_{L^2}^2
%  \end{align*}
%  thanks to Lemma \ref{lem:L2 is coerciv}, hence $u_1 = 0$ and (i) is proved. From Lemma
%  \ref{lem:L2 is invertible on the complement of y} we obtain $L^2_\perp \subset
%  R(\mathcal{L}_2^\epsilon)$. For the converse inclusion, we simply remark that $\mathcal{L}^\epsilon_2$ is self-adjoint and $\mathcal{L}^\epsilon_2 \theta'_\epsilon = 0$, hence (ii) is true. The last statement
%   follows directly from (i), (ii), and the a priori estimate stated in Lemma 
% \ref{lem:L2 is invertible on the complement of y}. The lemma is proved.
% \end{proof}

\section{Spectral analysis for $\mathcal{L}_0^\epsilon$}
\label{sec:NL0}
In this section we combine the results of Sections \ref{sec:NL1} and \ref{sec:NL2} to study the properties of the linearization \nolinebreak $\mathcal{L}_0^\epsilon$. In particular, we show that $0$ is an isolated point in $\sigma(\mathcal{L}_0^\epsilon)$ (in fact we show that $0$ is semisimple) and $\sigma(\mathcal{L}_0^\epsilon) \cap \dot{\imath}\setR = \set{0}$. This has important consequences for the analytic semigroup generated by the sectorial operator $\mathcal{L}_0^\epsilon$ and is the crucial ingredient for our perturbation argument in Section \ref{sec:Nperturbation}. We start by showing that the leading order term of $\mathcal{L}_0^\epsilon$ defines a sectorial operator.
\begin{lem}
\label{lem:Laplacian is sectorial}
  For all $\alpha \in \setR$, the linear operator $\mathcal{L}$ defined by
  \begin{align*}
    \mathcal{L} =
    \begin{pmatrix}
      \hphantom{-\alpha} \Delta & \alpha \Delta
      \\
      - \alpha \Delta & \hphantom{\alpha} \Delta
    \end{pmatrix}
    : H^2 \times H^2 \subset L^2 \times L^2 \to L^2 \times L^2
  \end{align*}
  is sectorial, and the graph norm of $\mathcal{L}$ is equivalent to the $H^2$-norm.
\end{lem}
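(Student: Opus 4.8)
The plan is to diagonalize $\mathcal{L}$ by a change of basis in $\setC^2$ and reduce everything to the scalar Laplacian, for which sectoriality is classical. The key observation is that $\mathcal{L}$ acts as the matrix $A = \begin{pmatrix} 1 & \alpha \\ -\alpha & 1 \end{pmatrix}$ tensored with $\Delta$, i.e.\ $\mathcal{L} = A \otimes \Delta$ on $L^2\times L^2 = \setC^2 \otimes L^2$. The matrix $A$ has eigenvalues $1 \pm \dot{\imath}\alpha$ with eigenvectors $(1,\mp\dot{\imath})$; it is diagonalizable by a fixed invertible matrix $S$, namely $S^{-1} A S = \diag(1+\dot{\imath}\alpha, 1-\dot{\imath}\alpha)$. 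Conjugating $\mathcal{L}$ by $S$ (acting componentwise, which is bounded with bounded inverse on $L^2\times L^2$ and preserves $H^2\times H^2$) transforms it into $\diag\big((1+\dot{\imath}\alpha)\Delta,\ (1-\dot{\imath}\alpha)\Delta\big)$.

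First I would record that for any $z \in \setC$ with $\rp z > 0$ — here $z = 1 \pm \dot{\imath}\alpha$, so $\rp z = 1 > 0$ — the operator $z\Delta$ on $H^2 \subset L^2$ is sectorial. This follows from the Fourier representation: $z\Delta$ has symbol $-z|\xi|^2$, whose range lies in the closed sector $\{\mu : |\arg(-\mu)| \le |\arg z|\}$ around the negative real axis, and $|\arg z| = |\arctan(\pm\alpha)| < \pi/2$; hence the spectrum is contained in a proper subsector of the left half-plane, and the resolvent estimate $\norm{\text{R}(\lambda, z\Delta)} \le M/|\lambda - \omega|$ for $\lambda$ outside that sector is immediate from $\norm{\text{R}(\lambda, z\Delta)} = \sup_\xi |\lambda + z|\xi|^2|^{-1}$. (Alternatively one can cite \cite[Prop.~2.4.1]{lunardi} or the standard fact that $\Delta$ generates an analytic semigroup on $L^2$ together with the rotation invariance of sectoriality under multiplication by $z$ with $|\arg z| < \pi/2$.) A finite direct sum of sectorial operators is sectorial (take the intersection of the two sectors and the larger of the two constants), so the diagonalized operator is sectorial, and since conjugation by the bounded invertible $S$ changes neither the spectrum nor the resolvent estimate except by a fixed constant, $\mathcal{L}$ itself is sectorial.

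For the graph-norm equivalence, note $D(\mathcal{L}) = H^2\times H^2$ and $\norm{(u,v)}_{\mathcal{L}} = \norm{(u,v)}_{L^2\times L^2} + \norm{\mathcal{L}(u,v)}_{L^2\times L^2}$. The estimate $\norm{\mathcal{L}(u,v)}_{L^2\times L^2} \le C\,(\norm{u''}_{L^2} + \norm{v''}_{L^2}) \le C\norm{(u,v)}_{H^2\times H^2}$ is obvious from the explicit formula for $\mathcal{L}$. For the reverse, it suffices to bound $\norm{u''}_{L^2}$ and $\norm{v''}_{L^2}$: writing $f = u'' + \alpha v''$ and $g = -\alpha u'' + v''$ for the components of $\mathcal{L}(u,v)$, invert the (constant, invertible) matrix $A$ to get $u'' = (f - \alpha g)/(1+\alpha^2)$ and $v'' = (\alpha f + g)/(1+\alpha^2)$, hence $\norm{u''}_{L^2} + \norm{v''}_{L^2} \le C\norm{\mathcal{L}(u,v)}_{L^2\times L^2}$; combined with interpolation $\norm{u'}_{L^2}^2 \le \norm{u}_{L^2}\norm{u''}_{L^2}$ (and likewise for $v$) and Young's inequality, this gives $\norm{(u,v)}_{H^2\times H^2} \le C\norm{(u,v)}_{\mathcal{L}}$.

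I do not anticipate a serious obstacle here: the whole lemma is a bookkeeping exercise built on the single nontrivial input that $z\Delta$ is sectorial for $\rp z > 0$, which is standard harmonic analysis / semigroup theory. The only point requiring a little care is making sure the similarity transform $S$ is applied correctly — it acts on the $\setC^2$-valued function $(u,v)$ pointwise, so it is a bounded operator on $L^2\times L^2$ with bounded inverse that commutes with $\Delta$ and preserves $H^2\times H^2$, and conjugation therefore transports both the sector and the resolvent bound with only the obvious constant changes. If one prefers to avoid diagonalization over $\setC$, an equally clean route is to work directly on the Fourier side: $\widehat{\mathcal{L}(u,v)}(\xi) = -|\xi|^2 A\,\widehat{(u,v)}(\xi)$, and one checks that $\lambda I - \mathcal{L}$ is invertible with the required norm bound precisely when $\lambda/(-|\xi|^2)$ avoids the (finite) spectrum $\{1\pm\dot{\imath}\alpha\}$ of $A$ uniformly, i.e.\ for $\lambda$ outside a suitable sector around $\setR_{<0}$; this makes the resolvent estimate a matter of the elementary inequality $\norm{(\lambda I + |\xi|^2 A)^{-1}} \le M/|\lambda - \omega|$ uniform in $\xi$.
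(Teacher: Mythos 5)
Your proof is correct, but it takes a genuinely different route from the paper's. The paper works variationally: for $\rp\lambda>0$ it introduces the sesquilinear form $\mathcal{G}(u,v)=\langle\lambda u-\mathcal{L}u,v\rangle$, observes that the $\alpha$-cross terms are skew and drop out of $\rp\,\mathcal{G}(u,u)$ so that $\mathcal{G}$ is coercive, solves the resolvent equation by Lax--Milgram, recovers $H^2$-regularity by decoupling into the scalar equations $\lambda(u_1-\alpha u_2)-(1+\alpha^2)\Delta u_1=f_1-\alpha f_2$ etc., derives the bound $\norm{\lambda\,\text{R}(\lambda,\mathcal{L})}\le C_\alpha$ on the half-plane $\rp\lambda>0$ by testing the equation against $\Delta u$, and then invokes Proposition~\ref{prop:sectorial} (Lunardi 2.1.11) to conclude sectoriality from the half-plane estimate; graph-norm equivalence is obtained from the open mapping theorem. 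You instead observe the algebraic tensor structure $\mathcal{L}=A\otimes\Delta$ with $A=\begin{pmatrix}1&\alpha\\-\alpha&1\end{pmatrix}$, diagonalize $A$ over $\setC$ with eigenvalues $1\pm\dot\imath\alpha$, reduce to the scalar operators $(1\pm\dot\imath\alpha)\Delta$ which are sectorial because $\abs{\arg(1\pm\dot\imath\alpha)}<\pi/2$, and transport sectoriality back through the bounded similarity; the graph-norm equivalence you do by directly inverting $A$ and interpolating. Your route is shorter and makes the spectral picture transparent (two rays in the left half-plane) but relies on the constant-coefficient structure, whereas the paper's Lax--Milgram argument is the kind that survives variable coefficients and is the one reused for $\mathcal{L}_1^\epsilon$ and $\mathcal{L}_2^\epsilon$ elsewhere. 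One cosmetic slip: the eigenvector of $A$ for $1+\dot\imath\alpha$ is $(1,\dot\imath)$ and for $1-\dot\imath\alpha$ it is $(1,-\dot\imath)$, i.e.\ $(1,\pm\dot\imath)$, not $(1,\mp\dot\imath)$; this is only a labeling issue and does not affect the argument.
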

\begin{proof}
 For a given $\lambda \in \setC$ with $\text{Re}\lambda >0$, we define $\mathcal{G}$ by
  \begin{align*}
    \mathcal{G}(u,v) =& \langle\lambda u - \mathcal{L}u,v\rangle
    = \lambda \int u_1 \overline{v_1} + \lambda \int u_2 \overline{v_2} + \int u_1' \overline {v_1'} + \alpha \int u_2' \overline{v_1'} 
    - \alpha \int u_1' \overline{v_2'} + \int u_2' \overline{v_2'}
  \end{align*}
  for $u=(u_1,u_2),v=(v_1,v_2) \in H^1 \times H^1$.
  We see that $\mathcal{G}$ is well-defined, sesquilinear, and bounded. 
  Moreover, $\mathcal{G}$ is coercive since
  \begin{align*}
    \text{Re} \,\mathcal{G} (u,u) =& \text{Re} \lambda \norm{u_1}_{L^2}^2 + \text{Re} \lambda \norm{u_2}_{L^2}^2 + \norm{u_1'}_{L^2}^2 + \norm{u_2'}_{L^2}^2
    %\\
    = \text{Re} \lambda  \norm{u}_{L^2}^2 + \norm{u'}_{L^2}^2
  \end{align*}
  and $\text{Re} \lambda >0$. With the help of the classical Lax-Milgram theorem,
  we find for every $f = (f_1,f_2) \in L^2 \times L^2$ 
  a unique $u = (u_1,u_2) \in H^1 \times H^1$ such that
  %\begin{align*}
    $\mathcal{G}(u,v) = (f,v)_{L^2}$ for all $v=(v_1,v_2) \in H^1 \times H^1$.
  %\end{align*}
  In particular, $u$ is the unique weak solution of the resolvent equation 
  %\begin{align*}
    $\lambda u - \mathcal{L} u = f$.
  %\end{align*}
  This also shows that 
  $u_1$ and $u_2$ satisfy
  \begin{align*}
    \lambda (u_1 - \alpha u_2) - (1 + \alpha^2)\Delta u_1 &= f_1 -\alpha f_2
    \\
    \lambda (u_2 + \alpha u_1) - (1 + \alpha^2)\Delta u_2 &= \alpha f_1 +f_2
  \end{align*}
  in the sense of distributions, hence $u_1,u_2 \in H^2$. We conclude that the resolvent equation admits a unique solution 
  $u \in H^2 \times H^2$ for every right hand side $f \in L^2 \times L^2$. For the resolvent estimate, let $u=(u_1,u_2) \in H^2 \times H^2$ and
  $f = (f_1,f_2) \in L^2 \times L^2$ with $\lambda u - \mathcal{L} u = f$ be given. We find
  \begin{align*}
  (f,\Delta u)_{L^2} =& (\lambda u - \mathcal{L} u ,\Delta u )_{L^2}
  \\
%   =& \lambda (u_1,\Delta u_1)_{L^2} - \norm{\Delta u_1}_{L^2}^2 - \alpha (\Delta u_2, \Delta u_1)_{L^2} 
%   %\\
%   + \lambda (u_2, \Delta u_2)_{L^2} + 
%   \alpha (\Delta u_1,\Delta u_2)_{L^2} - \norm{\Delta u_2}_{L^2}^2
%   \\
  =& - \lambda \norm{u'_1}_{L^2}^2  - \norm{\Delta u_1}_{L^2}^2 - \alpha (\Delta u_2, \Delta u_1)_{L^2}
  %\\
   -  \lambda \norm{u'_2}_{L^2}^2   +  \alpha (\Delta u_1,\Delta u_2)_{L^2} - \norm{\Delta u_2}_{L^2}^2   \,.
  \end{align*}
  From here we obtain by considering only the real part that
  \begin{align*}
    \text{Re}\lambda \, \norm{u'}_{L^2}^2 + \norm{\Delta u}_{L^2}^2 = - \text{Re} (f,\Delta u)_{L^2} \le \norm{f}_{L^2} \, 
    \norm{\Delta u}_{L^2} \, ,
  \end{align*}
  hence $\norm{\Delta u}_{L^2} \le \norm{f}_{L^2}$. In particular, we have
  \begin{align*}
    \abs{\lambda} \, \norm{u}_{L^2} \le \norm{f}_{L^2} + \norm{\mathcal{L} u}_{L^2} \le   \norm{f}_{L^2} + C_\alpha 
    \norm{\Delta u}_{L^2} \le C_\alpha \, \norm{f}_{L^2} \, ,
  \end{align*}
  thus $\norm{\lambda \text{R}(\lambda,\mathcal{L})} \le C_\alpha$ for all $\text{Re}\lambda >0$. Proposition \ref{prop:sectorial}
  implies that $\mathcal{L}$ is sectorial. Furthermore, we see with the help of the open mapping theorem that the graph norm of $\mathcal{L}$ is equivalent to the $H^2$-norm. The lemma is proved.
\end{proof}
Above we made use of the following proposition which is taken from \cite[Proposition 2.1.11]{lunardi}.
\begin{proposition}
\label{prop:sectorial}
 Let $A:D(A) \subset X \to X$ be a linear operator on a Banach space $X$ such that the resolvent set $\rho(A)$ contains the half-plane $\set{\lambda \in \setC \, | \, \text{Re} \lambda \ge \omega}$ 
and 
%\begin{align*}
 $\norm{\lambda \text{R}(\lambda,A)} \le M$ for all $\text{Re} \lambda \ge \omega$,
%\end{align*}
 where $\omega \in \setR$ and $M>0$. Then $A$ is sectorial.
\end{proposition}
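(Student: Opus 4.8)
The plan is to bootstrap the resolvent bound on the closed half-plane $\set{\text{Re}\,\lambda \ge \omega}$ to a bound on a sector, using only the Neumann-series expansion of the resolvent on $\rho(A)$; after enlarging $M$ if necessary we may assume $M \ge 1$. Recall the elementary fact that if $\mu \in \rho(A)$ and $\abs{\lambda - \mu} < \norm{\text{R}(\mu,A)}^{-1}$, then $\lambda I - A = (\mu I - A)\bigl(I - (\mu - \lambda)\text{R}(\mu,A)\bigr)$ is boundedly invertible, so $\lambda \in \rho(A)$ and
\begin{align*}
 \text{R}(\lambda,A) = \sum_{n=0}^{\infty}(\mu-\lambda)^n\,\text{R}(\mu,A)^{n+1}\,, \qquad
 \norm{\text{R}(\lambda,A)} \le \frac{\norm{\text{R}(\mu,A)}}{1 - \abs{\lambda-\mu}\,\norm{\text{R}(\mu,A)}}\,.
\end{align*}
First I would apply this along the boundary line with $\mu = \omega + \dot{\imath} y$, $y \in \setR$, $\mu \ne 0$: the hypothesis gives $\norm{\text{R}(\mu,A)} \le M/\abs{\mu}$, hence the open ball $B_\mu := \set{\lambda \in \setC \, | \, \abs{\lambda - \mu} < \abs{\mu}/(2M)}$ lies in $\rho(A)$ and $\norm{\text{R}(\lambda,A)} \le 2M/\abs{\mu}$ for all $\lambda \in B_\mu$.

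Next comes the geometric core. Since the radius $\abs{\mu}/(2M) = \abs{\omega + \dot{\imath} y}/(2M)$ grows linearly in $\abs{y}$, the set $\Sigma := \set{\text{Re}\,\lambda \ge \omega} \cup \bigcup_{y \in \setR} B_{\omega + \dot{\imath} y}$ contains a sector $S_{\theta,\omega'}$ for a suitable $\omega' \in \setR$ and some $\theta \in (\tfrac{\pi}{2},\pi)$: the balls strung along the line $\set{\text{Re} = \omega}$, together with the half-plane, sweep out an open cone of half-angle $\tfrac{\pi}{2} + \arcsin\bigl(\tfrac{1}{2M}\bigr) \in (\tfrac{\pi}{2},\pi)$ about the positive real direction, and one takes $\theta$ slightly below this value and places the vertex $\omega'$ appropriately (its position depends on $\omega$ and $M$). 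This is an elementary planar computation — scaling invariant when $\omega = 0$, a bounded perturbation of that when $\omega \ne 0$ — and it gives $S_{\theta,\omega'} \subset \rho(A)$. Finally I would estimate $\text{R}(\lambda,A)$ on $S_{\theta,\omega'}$: if $\text{Re}\,\lambda \ge \omega$ then $\norm{\text{R}(\lambda,A)} \le M/\abs{\lambda}$, and otherwise $\lambda \in B_{\omega+\dot{\imath} y}$ for some $y$, so $\norm{\text{R}(\lambda,A)} \le 2M/\abs{\omega + \dot{\imath} y}$. In both cases the denominator appearing — $\abs{\lambda}$, resp.\ $\abs{\omega + \dot{\imath} y}$ — is comparable to $\abs{\lambda - \omega'}$ up to a constant depending only on $\omega,\omega',M,\theta$ (for $\abs{\lambda}$ large this is immediate from $\abs{\lambda} \le (1 + \tfrac{1}{2M})\abs{\omega+\dot{\imath} y}$ and $\abs{\lambda - \omega'} \le \abs{\lambda} + \abs{\omega'}$, and the bounded part, near the vertex, is handled by the ball estimate directly). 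Combining the two cases gives $\norm{\text{R}(\lambda,A)} \le M'/\abs{\lambda - \omega'}$ on $S_{\theta,\omega'}$ for a suitable $M' > 0$, which is precisely the definition of sectoriality.

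The only genuinely delicate point is the planar bookkeeping in the second step: pinning down the vertex $\omega'$, the angle $\theta$, and the comparison constants so that $S_{\theta,\omega'}$ really fits inside $\Sigma$ \emph{and} the relation $\abs{\omega + \dot{\imath} y} \asymp \abs{\lambda - \omega'}$ holds with a uniform constant, in particular near the vertex where $\abs{y}$ may be small. Everything else — the Neumann series, the two-case estimate, and assembling $M'$ — is routine.
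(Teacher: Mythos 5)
Your Neumann-series bootstrap is the standard textbook argument for this fact; the paper itself supplies no proof and simply cites Lunardi's Proposition 2.1.11, which is proved in essentially the same way. Your outline is sound — including the $\arcsin(1/(2M))$ aperture for the union of balls and the two-case resolvent estimate — and you correctly flag the one genuine piece of bookkeeping, namely placing the vertex $\omega'$ so that both the inclusion $S_{\theta,\omega'}\subset\Sigma$ and the comparison $\abs{\omega+\dot{\imath}y}\asymp\abs{\lambda-\omega'}$ hold uniformly down to the vertex (for $\omega\ne 0$ this follows from $\abs{\omega+\dot{\imath}y}\ge\abs{\omega}>0$; for $\omega=0$ one takes $\omega'=0$ and uses $\abs{\lambda}\asymp\abs{y}$ inside each ball).
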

Since we have the decomposition $\mathcal{L}_0^\epsilon =\kappa \mathcal{L}  + \mathcal{B}^\epsilon$ where $\mathcal{L}$ is sectorial and $\mathcal{B}^\epsilon:L^2\times L^2 \to L^2\times L^2$ is bounded,
% \begin{align*}
%  \mathcal{L}_0^\epsilon = 
%  \kappa \left(
%  \begin{array}{cc}
%   \hphantom{- \alpha} \Delta & \alpha \Delta
%   \\
%   - \alpha \Delta & \hphantom{\alpha} \Delta
%  \end{array}
%  \right)
%  + \mathcal{B}^\epsilon
% \end{align*}
we immediately obtain that $\mathcal{L}_0^\epsilon$ is sectorial. In the sequel we show that \nolinebreak $0$ is an isolated point in $\sigma(\mathcal{L}_0^\epsilon)$. Therefore, we first identify the kernel and range of $\mathcal{L}_0^\epsilon$.
\begin{lem}
\label{lem:L0 splitting}
 Let $\epsilon>0$ be small enough. Then the following statements for the linear operator $\mathcal{L}_0^\epsilon$ hold true:
\begin{enumerate}
 \item[(i)] $N(\mathcal{L}_0^\epsilon) = \set{0} \times N(\mathcal{L}_2^\epsilon) = \set{0} \times \text{span}\set{\theta'_\epsilon}$.
 \item[(ii)]  $R(\mathcal{L}_0^\epsilon)$ is closed and $R(\mathcal{L}_0^\epsilon) = 
 \set{ (f_1,f_2) \in L^2 \times L^2 \, | \, (\alpha f_1 +f_2 , \theta'_\epsilon)_{L^2} = 0}$.
 \item[(iii)] $L^2 \times L^2 = N(\mathcal{L}_0^\epsilon) \oplus R(\mathcal{L}_0^\epsilon)$.
\end{enumerate}
\end{lem}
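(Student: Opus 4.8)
The plan is to reduce everything to the two preceding sections by exploiting the $2\times 2$ block form of $\mathcal{L}_0^\epsilon$. Writing out the action,
$\mathcal{L}_0^\epsilon(u,v) = \big(\mathcal{L}_1^\epsilon u + \alpha\mathcal{L}_2^\epsilon v,\; -\alpha\mathcal{L}_1^\epsilon u + \mathcal{L}_2^\epsilon v\big)$ for $(u,v)\in H^2\times H^2$, the key elementary observation is that a suitable linear combination of the two components decouples the problem: $\alpha\cdot(\text{first component}) + (\text{second component}) = (1+\alpha^2)\mathcal{L}_2^\epsilon v$ eliminates $u$ completely, while once $v$ is known the first component alone determines $u$ through the invertible operator $\mathcal{L}_1^\epsilon$ (Lemma~\ref{lem:L1 is sectorial}). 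Thus the whole lemma will follow from Lemma~\ref{lem:L1 is sectorial} and the complete description of $\mathcal{L}_2^\epsilon$ in Lemma~\ref{lem:properties of L2}, together with the fact that $\theta'_\epsilon\in H^2$ and $\theta'_\epsilon\not\equiv 0$ (Lemma~\ref{lem:Nphasefunction}).

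For (i): if $(u,v)\in N(\mathcal{L}_0^\epsilon)$, the combination above gives $(1+\alpha^2)\mathcal{L}_2^\epsilon v = 0$, hence $v\in N(\mathcal{L}_2^\epsilon)=\text{span}\set{\theta'_\epsilon}$ by Lemma~\ref{lem:properties of L2}(i); then the first equation reads $\mathcal{L}_1^\epsilon u = -\alpha\mathcal{L}_2^\epsilon v = 0$, so $u=0$ since $\mathcal{L}_1^\epsilon$ is invertible. Conversely $(0,\theta'_\epsilon)\in H^2\times H^2$ lies in $N(\mathcal{L}_0^\epsilon)$ because $\mathcal{L}_2^\epsilon\theta'_\epsilon=0$. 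This proves $N(\mathcal{L}_0^\epsilon)=\set{0}\times\text{span}\set{\theta'_\epsilon}$.

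For (ii): necessity is immediate from the same manipulation — if $\mathcal{L}_0^\epsilon(u,v)=(f_1,f_2)$ then $(1+\alpha^2)\mathcal{L}_2^\epsilon v = \alpha f_1+f_2$, so $\alpha f_1+f_2\in R(\mathcal{L}_2^\epsilon)=L^2_\perp$ by Lemma~\ref{lem:properties of L2}(ii). For sufficiency, given $(f_1,f_2)$ with $\alpha f_1+f_2\perp\theta'_\epsilon$, I would use Lemma~\ref{lem:properties of L2}(iii) to take the unique $v\in H^2_\perp$ solving $\mathcal{L}_2^\epsilon v = (1+\alpha^2)^{-1}(\alpha f_1+f_2)$, then set $u = (\mathcal{L}_1^\epsilon)^{-1}(f_1-\alpha\mathcal{L}_2^\epsilon v)\in H^2$ via Lemma~\ref{lem:L1 is sectorial}, and check in one line that the second component equals $-\alpha f_1 + (1+\alpha^2)\mathcal{L}_2^\epsilon v = f_2$. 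Closedness of $R(\mathcal{L}_0^\epsilon)$ is then free, since the displayed set is the kernel of the bounded linear functional $(f_1,f_2)\mapsto(\alpha f_1+f_2,\theta'_\epsilon)_{L^2}$ on $L^2\times L^2$.

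For (iii): by (i) the kernel is the line $\text{span}\set{(0,\theta'_\epsilon)}$, and by (ii) the range has codimension one; moreover $(0,c\theta'_\epsilon)\in R(\mathcal{L}_0^\epsilon)$ iff $c\norm{\theta'_\epsilon}_{L^2}^2 = 0$ iff $c=0$, so $N(\mathcal{L}_0^\epsilon)\cap R(\mathcal{L}_0^\epsilon)=\set{0}$. Given an arbitrary $(g_1,g_2)\in L^2\times L^2$, subtracting the multiple $c=(\alpha g_1+g_2,\theta'_\epsilon)_{L^2}/\norm{\theta'_\epsilon}_{L^2}^2$ of $(0,\theta'_\epsilon)$ lands in $R(\mathcal{L}_0^\epsilon)$, which shows $N(\mathcal{L}_0^\epsilon)+R(\mathcal{L}_0^\epsilon)=L^2\times L^2$; the associated projection $(g_1,g_2)\mapsto(0,c\theta'_\epsilon)$ is manifestly bounded, so the splitting is a topological direct sum. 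I do not expect a genuine obstacle here: the argument is bookkeeping on top of Sections~\ref{sec:NL1} and~\ref{sec:NL2}. The only points requiring a little care are that the decoupling steps must be read as linear combinations of the two component equations (valid on all of $H^2\times H^2$ because $\mathcal{L}_1^\epsilon$, $\mathcal{L}_2^\epsilon$ are genuine operators there), and that in part~(ii) the function $v$ should be chosen in $H^2_\perp$ so that Lemma~\ref{lem:properties of L2}(iii) applies and $(u,v)$ is well defined.
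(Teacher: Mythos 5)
Your proof is correct and follows essentially the same route as the paper: decouple the two block equations via the linear combinations $f_1-\alpha f_2 = (1+\alpha^2)\mathcal{L}_1^\epsilon u$ and $\alpha f_1+f_2 = (1+\alpha^2)\mathcal{L}_2^\epsilon v$, then invoke the invertibility of $\mathcal{L}_1^\epsilon$ (Lemma~\ref{lem:L1 is sectorial}) and the description of $\mathcal{L}_2^\epsilon$ (Lemma~\ref{lem:properties of L2}) for all three parts. The only cosmetic difference is that in (ii) you solve $\mathcal{L}_1^\epsilon u = f_1-\alpha\mathcal{L}_2^\epsilon v$ rather than $f_1-\alpha f_2=(1+\alpha^2)\mathcal{L}_1^\epsilon u$, but these coincide once $\mathcal{L}_2^\epsilon v$ is known, so the argument is the same.
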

\begin{proof}
 Statement (i) follows directly from the equivalences
\begin{align*}
 (u_1,u_2) \in N(\mathcal{L}_0^\epsilon) \quad &\Leftrightarrow \quad
 0= \mathcal{L}_1^\epsilon u_1 + \alpha \mathcal{L}_2^\epsilon u_2
 \quad \text{and} \quad
 0= -\alpha \mathcal{L}_1^\epsilon u_1 + \mathcal{L}_2^\epsilon u_2
\\
\quad&\Leftrightarrow\quad
 0= (1+\alpha^2) \mathcal{L}_1^\epsilon u_1
 \quad \text{and} \quad
 0= (1+\alpha^2) \mathcal{L}_2^\epsilon u_2
\\
\quad&\Leftrightarrow\quad
 u_1 = 0 \quad \text{and} \quad u_2 \in N(\mathcal{L}_2^\epsilon) = \text{span}\set{\theta'_\epsilon} 
 \, ,
%  (u_1,u_2) \in N(\mathcal{L}_0^\epsilon) \quad&\Leftrightarrow\quad
% \left\{
% \begin{array}{l}
%  0= \mathcal{L}_1^\epsilon u_1 + \alpha \mathcal{L}_2^\epsilon u_2
%  \\
%  0= -\alpha \mathcal{L}_1^\epsilon u_1 + \mathcal{L}_2^\epsilon u_2
% \end{array}
% \right.
% \\
% \quad&\Leftrightarrow\quad
% \left\{
% \begin{array}{l}
%  0= (1+\alpha^2) \mathcal{L}_1^\epsilon u_1
%  \\
%  0= (1+\alpha^2) \mathcal{L}_2^\epsilon u_2
% \end{array}
% \right.
% \\
% \quad&\Leftrightarrow\quad
% \begin{array}{l}
%  u_1 = 0 \quad \text{and} \quad u_2 \in N(\mathcal{L}_2^\epsilon) = \text{span}\set{\theta'_\epsilon} 
%  \, ,
% \end{array}
\end{align*}
where we have used Lemmas \ref{lem:L1 is sectorial} and \ref{lem:properties of L2}.

For statement (ii), let first the couple $(f_1,f_2) \in R(\mathcal{L}_0^\epsilon)$ be given. This means
\begin{align*}
 f_1 &=\mathcal{L}_1^\epsilon u_1 + \alpha \mathcal{L}_2^\epsilon u_2
 \,, \qquad
 f_2 =-\alpha \mathcal{L}_1^\epsilon u_1 + \mathcal{L}_2^\epsilon u_2
\end{align*}
for some $u_1, u_2 \in H^2$. We can equivalently rewrite this as
\begin{align*}
 f_1 - \alpha f_2 &= (1+\alpha^2)\mathcal{L}_1^\epsilon u_1
 \, , \qquad
 \alpha f_1 + f_2 = (1+\alpha^2)\mathcal{L}_2^\epsilon u_2 \, ,
\end{align*}
hence $(\alpha f_1 + f_2, \theta'_\epsilon)_{L^2}=0$. For the converse inclusion, let now $f_1,f_2 \in L^2$ be such that $(\alpha f_1 + f_2, \theta'_\epsilon)_{L^2}=0$. Thanks to Lemma \ref{lem:properties of L2}, there exists a $u_2 \in H^2$ such that $\alpha f_1 + f_2 = (1+\alpha^2)\mathcal{L}_2^\epsilon u_2$, and because of Lemma \nolinebreak \ref{lem:L1 is sectorial}, there exists a unique $u_1 \in H^2$ such that$f_1 - \alpha f_2 = (1+\alpha^2)\mathcal{L}_1^\epsilon u_1$. In particular, we have $\mathcal{L}_0^\epsilon (u_1,u_2) = (f_1,f_2)$, hence $(f_1,f_2) \in R(\mathcal{L}_0^\epsilon)$. This proves (ii).

We now show that the sum $N(\mathcal{L}_0^\epsilon) + R(\mathcal{L}_0^\epsilon)$ is direct. Let therefore $(f_1,f_2)$ be an element of the intersection $N(\mathcal{L}_0^\epsilon) \cap R(\mathcal{L}_0^\epsilon)$. We find
$(f_1,f_2) = (0, \lambda \theta'_\epsilon)$ and
%\begin{align*}
 $0 = (\alpha f_1 + f_2 , \theta'_\epsilon)_{L^2} = \lambda \norm{\theta'_\epsilon}_{L^2}^2$,
%\end{align*}
hence $(f_1,f_2)=(0,0)$ and the sum is direct. Let now $(f_1,f_2) \in L^2 \times L^2$ be arbitrary. We can decompose $(f_1,f_2)$ as follows:
\begin{align*}
 (f_1,f_2) = (0,\lambda \theta'_\epsilon) + (f_1,f_2 - \lambda \theta'_\epsilon) \in N(\mathcal{L}_0^\epsilon) \oplus R(\mathcal{L}_0^\epsilon) \, ,
\end{align*}
where $\lambda = (\alpha f_1 +f_2,\theta'_\epsilon)_{L^2}/\norm{\theta'_\epsilon}_{L^2}^2$. This proves (iii) and the lemma.
\end{proof}
We can now prove:
\begin{lem}
\label{lem:0 is isolated}
 Let $\epsilon>0$ be small enough. Then 0 is an isolated point in the spectrum of $\mathcal{L}_0^\epsilon$.
\end{lem}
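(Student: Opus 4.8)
The plan is to use the topological direct sum decomposition $L^2\times L^2 = N(\mathcal{L}_0^\epsilon)\oplus R(\mathcal{L}_0^\epsilon)$ established in Lemma \ref{lem:L0 splitting} and to show that it reduces $\mathcal{L}_0^\epsilon$ into a trivial part on the one-dimensional kernel and a boundedly invertible part on the closed range. First I would introduce the bounded projection $P$ onto $N(\mathcal{L}_0^\epsilon)$ along $R(\mathcal{L}_0^\epsilon)$, which by the proof of Lemma \ref{lem:L0 splitting}(iii) is given explicitly by
\begin{align*}
 P(f_1,f_2) = \Big(0,\ \frac{(\alpha f_1 + f_2,\theta'_\epsilon)_{L^2}}{\norm{\theta'_\epsilon}_{L^2}^2}\,\theta'_\epsilon\Big)\,.
\end{align*}
Since $\theta'_\epsilon\in H^2$, the projection $P$ maps $H^2\times H^2$ into itself, and it commutes with $\mathcal{L}_0^\epsilon$ on the domain: on the one hand $\mathcal{L}_0^\epsilon P(f_1,f_2)$ is a scalar multiple of $\mathcal{L}_0^\epsilon(0,\theta'_\epsilon)=(\alpha\mathcal{L}_2^\epsilon\theta'_\epsilon,\mathcal{L}_2^\epsilon\theta'_\epsilon)=0$ by Lemma \ref{lem:properties of L2}, and on the other hand $P\mathcal{L}_0^\epsilon(f_1,f_2)=0$ because $\mathcal{L}_0^\epsilon(f_1,f_2)\in R(\mathcal{L}_0^\epsilon)=N(P)$. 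Hence $P$ reduces $\mathcal{L}_0^\epsilon$, and it suffices to study the two parts separately.

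On the kernel, $\mathcal{L}_0^\epsilon$ vanishes, so the restriction there has spectrum $\set{0}$. On the range, I claim that $\mathcal{L}_0^\epsilon$ restricts to a bijection from $R(\mathcal{L}_0^\epsilon)\cap(H^2\times H^2)$ onto $R(\mathcal{L}_0^\epsilon)$. Given $(f_1,f_2)\in R(\mathcal{L}_0^\epsilon)$, the decoupled system $f_1-\alpha f_2=(1+\alpha^2)\mathcal{L}_1^\epsilon u_1$ and $\alpha f_1+f_2=(1+\alpha^2)\mathcal{L}_2^\epsilon u_2$ determines $u_1\in H^2$ uniquely by Lemma \ref{lem:L1 is sectorial}, and $u_2\in H^2$ up to a multiple of $\theta'_\epsilon$ by Lemma \ref{lem:properties of L2} (here one uses $\alpha f_1+f_2\perp\theta'_\epsilon$); the free constant is then pinned down uniquely by requiring $(u_1,u_2)\in R(\mathcal{L}_0^\epsilon)$, that is $\alpha(u_1,\theta'_\epsilon)_{L^2}+(u_2,\theta'_\epsilon)_{L^2}=0$. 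The restricted operator is closed, so the open mapping theorem applied on the Banach space $R(\mathcal{L}_0^\epsilon)$ yields a bounded inverse; in particular $0\in\rho\big(\mathcal{L}_0^\epsilon|_{R(\mathcal{L}_0^\epsilon)}\big)$.

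Finally I would patch the two pieces together. Since the resolvent set $\rho\big(\mathcal{L}_0^\epsilon|_{R(\mathcal{L}_0^\epsilon)}\big)$ is open it contains a disk $\set{\lambda\in\setC \,|\, \abs{\lambda}<r}$, and for $0<\abs{\lambda}<r$ the operator $\lambda I-\mathcal{L}_0^\epsilon$ acts as $\lambda I$ on the finite-dimensional kernel and as the invertible operator $\lambda I-\mathcal{L}_0^\epsilon|_{R(\mathcal{L}_0^\epsilon)}$ on the range, hence $\lambda\in\rho(\mathcal{L}_0^\epsilon)$; together with $0\in\sigma(\mathcal{L}_0^\epsilon)$ (the kernel being nontrivial) this shows that $0$ is isolated, and in fact semisimple via the reducing decomposition. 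The step I expect to require the most care is the reduction itself --- checking that $P$ preserves the domain and commutes with the unbounded operator $\mathcal{L}_0^\epsilon$ --- together with identifying the domain $R(\mathcal{L}_0^\epsilon)\cap(H^2\times H^2)$ of the restricted operator precisely enough for the open mapping theorem to apply.
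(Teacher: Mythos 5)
Your proposal is correct and follows essentially the same route as the paper: use the direct sum $L^2\times L^2 = N(\mathcal{L}_0^\epsilon)\oplus R(\mathcal{L}_0^\epsilon)$ from Lemma \ref{lem:L0 splitting}, show $0\in\rho\big(\mathcal{L}_0^\epsilon|_{R(\mathcal{L}_0^\epsilon)}\big)$ via Lemmas \ref{lem:L1 is sectorial} and \ref{lem:properties of L2}, then transfer the open disk of invertibility from the restriction to the whole operator by treating the kernel and range blocks separately. The only stylistic difference is that you make the reducing projection $P$ and its commutation with $\mathcal{L}_0^\epsilon$ explicit, whereas the paper carries out the same bookkeeping directly on the decomposition without naming $P$; both arguments are sound.
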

\begin{proof}
 Lemma \ref{lem:L0 splitting} implies that $0$ belongs to the resolvent set of the operator
 \begin{align*}
  \mathcal{L}_0^\epsilon : H^2 \times H^2 \cap R(\mathcal{L}_0^\epsilon) \subset  R(\mathcal{L}_0^\epsilon)  \to
  R(\mathcal{L}_0^\epsilon) \,.
 \end{align*}
 Since the resolvent set is always open, we find $r>0$ such that
\begin{align*}
 \lambda I - \mathcal{L}_0^\epsilon : H^2 \times H^2 \cap R(\mathcal{L}_0^\epsilon) \subset R(\mathcal{L}_0^\epsilon) \to R(\mathcal{L}_0^\epsilon)
\end{align*}
is invertible for all $\abs{\lambda} < r$. Let now $0<\abs{\lambda} < r$ be given. We prove that the mapping 
\begin{align*}
\lambda I - \mathcal{L}_0^\epsilon:H^2 \times H^2 \subset L^2 \times L^2 \to L^2 \times L^2 
\end{align*}
is invertible and thus $\lambda \in \rho(\mathcal{L}_0^\epsilon)$. First, we show that $\lambda I - \mathcal{L}_0^\epsilon$ is injective. So assume $\lambda u - \mathcal{L}_0^\epsilon u =0$. We decompose
%\begin{align*}
 $u = v + w \in N(\mathcal{L}_0^\epsilon) \oplus \big(H^2 \times H^2 \cap
 R(\mathcal{L}_0^\epsilon) \big)$
%\end{align*}
to find
\begin{align*}
 0= \lambda v + \big( \lambda w - \mathcal{L}_0^\epsilon w \big)
\in N(\mathcal{L}_0^\epsilon) \oplus R(\mathcal{L}_0^\epsilon) \, ,
\end{align*}
hence $\lambda v=0$ and $\lambda w - \mathcal{L}_0^\epsilon w =0$. Since $0<\abs{\lambda} < r$, we obtain $v=w=0$ and $\lambda I - \mathcal{L}_0^\epsilon$ is injective. Next, we show that $\lambda I - \mathcal{L}_0^\epsilon$ is surjective: Let therefore $f \in L^2 \times L^2$ be given. Again, we use the decomposition
\begin{align*}
 f = g + h \in N(\mathcal{L}_0^\epsilon) \oplus R(\mathcal{L}_0^\epsilon)
\end{align*}
and find $w \in H^2 \times H^2 \cap R(\mathcal{L}_0^\epsilon)$ such that
%\begin{align*}
 $\lambda w - \mathcal{L}_0^\epsilon w = h$.
%\end{align*}
Moreover, we choose $v = \lambda^{-1} g \in N(\mathcal{L}_0^\epsilon)$ and set $u = v + w$ to see $\lambda u - \mathcal{L}_0^\epsilon u = f$. It follows $\lambda \in \rho(\mathcal{L}_0^\epsilon)$ for $0<\abs{\lambda}<r$. In particular, 0 is an isolated point in the spectrum $\sigma(\mathcal{L}_0^\epsilon$). The lemma is proved.
\end{proof}
Lemmas \ref{lem:L0 splitting} and \ref{lem:0 is isolated} have the following consequences for the analytic semigroup generated by $\mathcal{L}_0^\epsilon$:
\begin{lem}
\label{lem:analytic semigroup of L0}
Let $\epsilon >0$ be small enough. Then the following statements
for the analytic semigroup $ e^{t \mathcal{L}_0^\epsilon}$ generated by $\mathcal{L}_0^\epsilon $ hold true:
\begin{enumerate}
 \item[(i)] $e^{t \mathcal{L}_0^\epsilon} u = u$ for all $u \in N(\mathcal{L}_0^\epsilon)$
 \item[(ii)] $e^{t \mathcal{L}_0^\epsilon} \big( R(\mathcal{L}_0^\epsilon) \big) \subset 
  R(\mathcal{L}_0^\epsilon)$
 \item[(iii)] $\sigma\big({e^{t \mathcal{L}_0^\epsilon}}_{|R(\mathcal{L}_0^\epsilon)} \big) \setminus \set{0} = e^{t \sigma(\mathcal{L}_0^\epsilon)\setminus \set{0}}$ for all $t>0$.
\end{enumerate}
\end{lem}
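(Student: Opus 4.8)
The plan is to exploit the direct-sum decomposition $L^2 \times L^2 = N(\mathcal{L}_0^\epsilon) \oplus R(\mathcal{L}_0^\epsilon)$ from Lemma \ref{lem:L0 splitting} together with the fact that, by Lemma \ref{lem:0 is isolated}, $0$ is an isolated point of $\sigma(\mathcal{L}_0^\epsilon)$. Because $0$ is isolated, the Riesz projection $P = \frac{1}{2\pi\dot{\imath}}\int_{|\lambda|=r} R(\lambda,\mathcal{L}_0^\epsilon)\,d\lambda$ associated with the spectral set $\set{0}$ is well-defined for a sufficiently small circle, and it commutes with $\mathcal{L}_0^\epsilon$ and with $e^{t\mathcal{L}_0^\epsilon}$. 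The first thing I would do is identify $N(P)$ and $R(P)$ with the decomposition already in hand. From Lemma \ref{lem:L0 splitting}(i) the kernel of $\mathcal{L}_0^\epsilon$ is one-dimensional and equal to $\set{0}\times\mathrm{span}\set{\theta'_\epsilon}$; from Lemma \ref{lem:0 is isolated} the restriction $\mathcal{L}_0^\epsilon|_{R(\mathcal{L}_0^\epsilon)}$ has $0$ in its resolvent set, which forces $0$ to be a \emph{semisimple} eigenvalue (algebraic multiplicity one), so $R(P)=N(\mathcal{L}_0^\epsilon)=\set{0}\times\mathrm{span}\set{\theta'_\epsilon}$ and $N(P)=R(\mathcal{L}_0^\epsilon)$. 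In other words, the spectral decomposition induced by the isolated point $0$ coincides with the algebraic decomposition of Lemma \ref{lem:L0 splitting}(iii).

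With that identification, statements (i) and (ii) are nearly immediate. For (i): on the finite-dimensional invariant subspace $R(P)=N(\mathcal{L}_0^\epsilon)$ the generator acts as the zero operator, hence the semigroup restricted there is $e^{t\cdot 0}=I$; concretely, for $u\in N(\mathcal{L}_0^\epsilon)$ the function $t\mapsto u$ solves the abstract Cauchy problem with generator $\mathcal{L}_0^\epsilon$ and initial value $u$, and by uniqueness $e^{t\mathcal{L}_0^\epsilon}u=u$. For (ii): since $P$ commutes with $e^{t\mathcal{L}_0^\epsilon}$, the complementary projection $I-P$ does as well, and $R(\mathcal{L}_0^\epsilon)=R(I-P)$ is therefore invariant under the semigroup, i.e. $e^{t\mathcal{L}_0^\epsilon}\big(R(\mathcal{L}_0^\epsilon)\big)\subset R(\mathcal{L}_0^\epsilon)$.

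For statement (iii), the key point is the spectral mapping theorem for analytic semigroups applied to the \emph{restricted} semigroup on the Banach space $R(\mathcal{L}_0^\epsilon)$. First I would observe that $e^{t\mathcal{L}_0^\epsilon}|_{R(\mathcal{L}_0^\epsilon)}$ is precisely the analytic semigroup generated by the part $\mathcal{L}_0^\epsilon|_{R(\mathcal{L}_0^\epsilon)}$ of $\mathcal{L}_0^\epsilon$ in $R(\mathcal{L}_0^\epsilon)$ (this is a standard fact about parts of generators in invariant complemented subspaces), and that the spectrum of this part is $\sigma(\mathcal{L}_0^\epsilon)\setminus\set{0}$, again because the spectral decomposition splits the spectrum along the complemented invariant subspaces. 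Then I would invoke the spectral mapping theorem for analytic (eventually norm-continuous) semigroups — e.g. Lunardi \cite{lunardi}, Corollary 2.3.7 — which gives $\sigma\big(e^{t A}\big)\setminus\set{0}=e^{t\sigma(A)}$ for $t>0$ whenever $A$ is sectorial; applying this with $A=\mathcal{L}_0^\epsilon|_{R(\mathcal{L}_0^\epsilon)}$ yields $\sigma\big(e^{t\mathcal{L}_0^\epsilon}|_{R(\mathcal{L}_0^\epsilon)}\big)\setminus\set{0}=e^{t(\sigma(\mathcal{L}_0^\epsilon)\setminus\set{0})}$, which is the claim.

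The main obstacle, and the step that needs the most care, is the bookkeeping that makes the spectral decomposition compatible with the algebraic one: one must verify that the Riesz projection $P$ for the spectral set $\set{0}$ really equals the projection onto $N(\mathcal{L}_0^\epsilon)$ along $R(\mathcal{L}_0^\epsilon)$, which amounts to checking that $0$ is semisimple (no Jordan block). This follows from Lemma \ref{lem:0 is isolated}, where it was shown that $0\in\rho\big(\mathcal{L}_0^\epsilon|_{R(\mathcal{L}_0^\epsilon)}\big)$ together with $L^2\times L^2=N(\mathcal{L}_0^\epsilon)\oplus R(\mathcal{L}_0^\epsilon)$ and $\mathcal{L}_0^\epsilon$ acting as $0$ on $N(\mathcal{L}_0^\epsilon)$; these three facts pin down $P$. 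Everything else — invariance of $R(\mathcal{L}_0^\epsilon)$, the identification of the restricted semigroup, the spectral mapping theorem — is then standard semigroup theory, and I would cite \cite{lunardi} rather than reprove it.
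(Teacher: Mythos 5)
Your proposal is correct and follows essentially the same route as the paper: introduce the Riesz projection $P$ for the isolated spectral point $0$, identify the spectral decomposition $L^2\times L^2 = R(P)\oplus N(P)$ with the algebraic splitting $N(\mathcal{L}_0^\epsilon)\oplus R(\mathcal{L}_0^\epsilon)$ of Lemma \ref{lem:L0 splitting} (the paper invokes Lunardi's Propositions A.1.2 and A.2.2 where you check semisimplicity by hand), and then read off (i), (ii) from the induced splitting of the semigroup and (iii) from the spectral mapping theorem \cite[Corollary 2.3.7]{lunardi} applied to the part of $\mathcal{L}_0^\epsilon$ in $R(\mathcal{L}_0^\epsilon)$. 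The only cosmetic difference is your uniqueness-of-the-Cauchy-problem argument for (i), which is an equivalent way to say that the restricted generator is zero.
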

\begin{proof}
 Because of Lemma \ref{lem:0 is isolated}, the point 0 is isolated in
 $\sigma(\mathcal{L}_0^\epsilon)$, hence the sets $\sigma_1 = \set{0}$ and $ \sigma_2 =
 \sigma(\mathcal{L}_0^\epsilon)\setminus\set{0}$ are closed. Moreover, we can find $r >0$ 
 such that $\sigma_1 \subset B_r(0)$ and $\sigma_2 \cap \overline{B_r(0)} =
 \emptyset$. We parameterize the boundary of $B_r(0)$ by a curve $\gamma$, oriented
 counterclockwise, and define
 \begin{align*}
  P = \frac{1}{2 \pi \dot{\imath}} \int_\gamma \text{R}(\xi,\mathcal{L}_0^\epsilon) \, d \xi \,.
 \end{align*}
 From \cite[Proposition A.1.2]{lunardi} we know that $P$ is a projection such that $P(L^2 \times
 L^2) \subset H^2 \times H^2$. Moreover, we can decompose $L^2\times L^2$ by
 \begin{align*}
 L^2\times L^2 = X_1 \oplus X_2 \, , \quad X_1 = P(L^2 \times L^2)\,, \quad X_2 = (I-P)(L^2 \times L^2) \,,
\end{align*}
and this induces a splitting of the operator $\mathcal{L}_0^\epsilon$ as follows:
\begin{align*}
 &\mathcal{A}_1:X_1 \to X_1: u \mapsto \mathcal{L}_0^\epsilon u \, ,
 \\
 &\mathcal{A}_2:H^2 \times H^2 \cap X_2 \to X_2: u \mapsto \mathcal{L}_0^\epsilon u\,.
\end{align*}
Again from \cite[Proposition A.1.2]{lunardi} we get $\sigma(\mathcal{A}_1) = \sigma_1$, $\sigma(\mathcal{A}_2) = \sigma_2$, and $\text{R}(\lambda,\mathcal{A}_1) = \text{R}(\lambda, \mathcal{L}_0^\epsilon)_{|X_1}$, $\text{R}(\lambda,\mathcal{A}_2) = \text{R}(\lambda, \mathcal{L}_0^\epsilon)_{|X_2}$ for all $\lambda \in \rho(\mathcal{L}_0^\epsilon)$. In particular, $\mathcal{A}_1$ and $\mathcal{A}_2$ generate analytic semigroups on $X_1$ and $X_2$, respectively, and
we have
\begin{align*}
 e^{t \mathcal{A}_1} = {e^{t \mathcal{L}_0^\epsilon}}_{|X_1} \, , \quad e^{t \mathcal{A}_2} = {e^{t \mathcal{L}_0^\epsilon}}_{|X_2} \,.
\end{align*}
From Lemma \ref{lem:L0 splitting} and \cite[Proposition A.2.2]{lunardi}, we obtain $X_1 = N(\mathcal{L}_0^\epsilon)$ and $X_2 = R(\mathcal{L}_0^\epsilon)$. This proves (i) and (ii). To see (iii), we apply the spectral mapping theorem (see \cite[Corollary 2.3.7]{lunardi}) to $\mathcal{A}_2$ and find
\begin{align*}
 \sigma({e^{t \mathcal{L}_0^\epsilon}}_{|R(\mathcal{L}_0^\epsilon)}) \setminus \set{0}= \sigma(e^{t \mathcal{A}_2}) \setminus \set{0} = e^{t \sigma(\mathcal{A}_2)} = e^{t \sigma(\mathcal{L}_0^\epsilon)\setminus \set{0}}
\end{align*}
for every $t>0$. The lemma is proved.
\end{proof}
For the existence of $T$-periodic solutions, it is important 
to know 
whether or not the real number $1$ belongs to the spectrum of $e^{T\mathcal{L}_0^\epsilon}$.
The general spectral mapping theorem for 
analytic semigroups (see for example \cite[Corollary 2.3.7]{lunardi})
includes the following equivalence:
\begin{align*}
  1 \not\in \sigma(e^{T \mathcal{L}_0^\epsilon}) \quad \Leftrightarrow \quad 
  \frac{2 k \pi \dot{\imath}}{T} \not\in \sigma(\mathcal{L}_0^\epsilon) 
\text{ for all } k \in \setZ\,.
\end{align*}
We already know that $2 k \pi \dot{\imath}/T \in \sigma(\mathcal{L}_0^\epsilon)$ for $k=0$ because $\mathcal{L}_0^\epsilon$ has a nontrivial kernel. 
In view of Lemma \nolinebreak \ref{lem:analytic semigroup of L0}, it is 
good to analyze the remaining cases where $k \not= 0$. Since the operators 
$\mathcal{L}_1^\epsilon$ and $\mathcal{L}_2^\epsilon$ do not commute, we
can not use the spectral theorem for commuting self-adjoint operators to answer this question. It turns out that the self-adjointness of the operators $\mathcal{L}_1^\epsilon$ and $\mathcal{L}_2^\epsilon$ already suffices to completely rule out the cases where $k \not= 0$. This is the statement 
of the next lemma:
\begin{lem}
\label{lem:Spectrum of T}
 Let $H$ be a Hilbert space and $D \subset H$ be a dense 
subspace. Furthermore, let $\mathcal{A},\mathcal{B}$ be linear and self-adjoint operators from $D$ to $H$.
Consider for $\alpha \in \setR$ the linear operator $ \mathcal{T}$ defined by
\begin{align*}
 \mathcal{T} =
 \begin{pmatrix}
  \hphantom{-\alpha} \mathcal{A} & \alpha \mathcal{B}
  \\
  -\alpha \mathcal{A} & \hphantom{\alpha} \mathcal{B}
 \end{pmatrix}
: D \times D \subset H \times H \to H \times H \,.
\end{align*}
Then $\sigma(\mathcal{T}) \cap \dot{\imath}\setR \subset \set{0}$.
\end{lem}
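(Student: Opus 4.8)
The plan is to show that for $\lambda = \dot{\imath}s$ with $s \in \setR \setminus \set{0}$ the operator $\lambda I - \mathcal{T}$ is injective with closed range equal to all of $H \times H$; equivalently, I want to solve $(\dot{\imath}s)u - \mathcal{T}u = f$ uniquely for every $f$. The key observation is that, although $\mathcal{A}$ and $\mathcal{B}$ do not commute, the block structure of $\mathcal{T}$ lets one \emph{decouple} the two components by taking suitable linear combinations. Writing $u = (u_1, u_2)$, the system $(\dot{\imath}s) u_1 - \mathcal{A}u_1 - \alpha \mathcal{B} u_2 = f_1$ and $(\dot{\imath}s) u_2 + \alpha \mathcal{A} u_1 - \mathcal{B} u_2 = f_2$ can be recombined: forming $f_1 - \alpha f_2$ and $\alpha f_1 + f_2$ (i.e. multiplying by the matrix $\left(\begin{smallmatrix} 1 & -\alpha \\ \alpha & 1\end{smallmatrix}\right)$, which is invertible with inverse $(1+\alpha^2)^{-1}\left(\begin{smallmatrix} 1 & \alpha \\ -\alpha & 1\end{smallmatrix}\right)$) should produce, after using the off-diagonal cancellations, equations of the form $(\dot{\imath}s)(u_1 - \alpha u_2) - (1+\alpha^2)\mathcal{A}u_1 = f_1 - \alpha f_2$ and $(\dot{\imath}s)(u_2 + \alpha u_1) - (1+\alpha^2)\mathcal{B}u_2 = \alpha f_1 + f_2$ — exactly the kind of decoupling already used in the proof of Lemma~\ref{lem:Laplacian is sectorial} and in Lemma~\ref{lem:L0 splitting}.

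The decoupled equations are not quite standalone since each still couples $u_1$ and $u_2$ through the left-hand side, so the cleaner route is to prove injectivity directly via an energy identity and then handle surjectivity separately. For injectivity: suppose $\mathcal{T}u = \dot{\imath}s\, u$ with $u = (u_1,u_2) \in D \times D$. Take the $H\times H$ inner product with $u$. The diagonal part contributes $(\mathcal{A}u_1,u_1)_H + (\mathcal{B}u_2,u_2)_H$, which is \emph{real} because $\mathcal{A}, \mathcal{B}$ are self-adjoint. The off-diagonal part contributes $\alpha\big[(\mathcal{B}u_2,u_1)_H - (\mathcal{A}u_1,u_2)_H\big]$; again using self-adjointness, $(\mathcal{B}u_2,u_1)_H = \overline{(u_2,\mathcal{B}u_1)_H} = \overline{(\mathcal{B}u_1,u_2)_H}$, so this off-diagonal piece can be manipulated. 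The right-hand side gives $\dot{\imath}s\,\norm{u}^2$, which is purely imaginary. The strategy is to extract from the real and imaginary parts of $(\mathcal{T}u,u)_H = \dot{\imath}s\norm{u}^2$ two scalar relations that, since $s \neq 0$, force $u = 0$; concretely I expect to pair $\mathcal{T}u$ against $u$ and also against $Ju$ where $J = \left(\begin{smallmatrix} 0 & -1 \\ 1 & 0 \end{smallmatrix}\right)$ (or to use the decoupled form above) to separate the $\mathcal{A}$- and $\mathcal{B}$-quadratic forms, deducing $(\mathcal{A}u_1,u_1)_H$, $(\mathcal{B}u_2,u_2)_H$ and the norms all vanish.

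For surjectivity one argues that $\dot{\imath}s \in \rho(\mathcal{T})$ rather than just the point spectrum being empty: here I would either invoke that $\mathcal{T}$ is already known to be sectorial (it is $\mathcal{L}_0^\epsilon$ up to the bounded perturbation and the leading term $\mathcal{L}$ of Lemma~\ref{lem:Laplacian is sectorial}), so $\sigma(\mathcal{T})$ is a genuine spectrum for which it suffices to rule out approximate eigenvalues, or — more self-containedly in the abstract setting of this lemma — solve the decoupled resolvent system by a Lax–Milgram argument on the sesquilinear form associated with $(\dot{\imath}s) I - \mathcal{T}$, exactly as in Lemma~\ref{lem:Laplacian is sectorial}, after checking that the form's real part is controlled below by $\min\set{\ldots}\norm{u}^2$ using $s \neq 0$. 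The main obstacle I anticipate is the bookkeeping in the injectivity step: because $\mathcal{A}$ and $\mathcal{B}$ act on different components and there is no commutation, one must be careful to use self-adjointness in just the right pairings so that the quadratic forms $(\mathcal{A}u_1,u_1)$ and $(\mathcal{B}u_2,u_2)$ decouple cleanly from the cross terms; once the identity is arranged so that the imaginary part reads $s\norm{u}^2 = 0$ (or similar), the conclusion is immediate, but getting the off-diagonal $\alpha$-terms to cooperate is the delicate point.
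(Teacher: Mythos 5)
Your proposal starts in the right place but has two genuine gaps, one in each half of the argument.

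\textbf{Injectivity.} Pairing $\mathcal{T}u=\dot{\imath}s u$ directly with $u$ does not close: you get
\[
(\mathcal{A}u_1,u_1)_H + (\mathcal{B}u_2,u_2)_H + \alpha\big[(\mathcal{B}u_2,u_1)_H - (\mathcal{A}u_1,u_2)_H\big] = \dot{\imath}s\,\norm{u}^2,
\]
and since $\mathcal{A}$ and $\mathcal{B}$ are \emph{different} operators, the bracketed cross terms are not conjugates of one another, so neither the real nor the imaginary part of the identity cleanly separates the unknowns. You flag this yourself as ``the delicate point,'' but the proposal never resolves it, and testing against $Ju$ introduces new cross terms of the same awkward type. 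The missing idea in the paper is to first recombine the rows (multiply the resolvent system by $\left(\begin{smallmatrix}1&-\alpha\\ \alpha&1\end{smallmatrix}\right)$, as you anticipate) \emph{and then multiply by $-\dot{\imath}$}, producing the operator
\[
\mathcal{T}_0 = \begin{pmatrix} t-\dot{\imath}(1+\alpha^2)\mathcal{A} & -\alpha t \\ \alpha t & t-\dot{\imath}(1+\alpha^2)\mathcal{B}\end{pmatrix}.
\]
After this transformation the off-diagonal entries are \emph{scalars} $\pm\alpha t$, not operators; the cross terms in $(\mathcal{T}_0 u,u)_H$ are therefore complex conjugates and contribute only an imaginary part, while the diagonal operator terms are purely imaginary by self-adjointness. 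This yields the clean identity $\text{Re}(\mathcal{T}_0 u,u)_H = t\norm{u}^2_H$ (and the same for $\mathcal{T}_0^*$), which gives injectivity at once. That specific normalization is what makes the ``bookkeeping'' work, and your sketch does not find it.

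\textbf{Surjectivity.} Both of your proposed routes fail in the abstract setting of the lemma. Sectoriality of $\mathcal{T}$ is not a hypothesis here and is in general false: take $\mathcal{A}=0$ and $\mathcal{B}=$ multiplication by $x$ on $L^2(\setR)$, for which $\mathcal{T}$ is not semibounded in any sense. Lax--Milgram likewise presupposes a form domain $V\hookrightarrow H$ on which the associated sesquilinear form is bounded and coercive, structure that arbitrary (possibly unbounded, non-semibounded) self-adjoint operators do not provide. The paper instead argues via the closed range theorem: the estimate $\abs{t}\norm{u}^2 \le \norm{\mathcal{T}_0 u}\norm{u}$ shows $\mathcal{T}_0$ (which is closed) has closed range, and the companion computation of $\mathcal{T}_0^*$ shows the adjoint is injective, whence $R(\mathcal{T}_0) = N(\mathcal{T}_0^*)^\perp = H\times H$. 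That functional-analytic step is what you would need in place of Lax--Milgram or sectoriality, and it is absent from the proposal.
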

\begin{proof}
 We divide the proof into several steps and start with
 \\
 Claim 1: $\mathcal{T}$ is closed.
 \\
 Proof of Claim 1: We have to show that
 %\begin{align*}
  $(u_n) \subset D \times D, \,\, u_n \to u , \,\, \mathcal{T} u_n \to f$ implies $u \in D \times D, \,\, \mathcal{T} u = f$.
 %\end{align*}
 For $(u_n)$ and $f$ given as above, we find
 \begin{align*}
  \mathcal{A} u_n^1 + \alpha \mathcal{B}u_n^2 \to f^1
  \qquad \text{and} \qquad
  -\alpha \mathcal{A}u_n^1 + \mathcal{B}u_n^2 \to f^2 \,,
 \end{align*}
 hence
 \begin{align*}
  u_n^1 \to u^1, \quad (1+\alpha^2) \mathcal{A}u_n^1 \to f^1 - \alpha f^2
  \qquad \text{and} \qquad
  u_n^2 \to u^2, \quad (1+\alpha^2) \mathcal{B}u_n^2 \to \alpha f^1 + f^2 
  \, .
 \end{align*}
 Since $\mathcal{A}$ and $\mathcal{B}$ are closed, we obtain $u^1,u^2 \in D$ and 
 \begin{align*}
   (1+\alpha^2) \mathcal{A}u^1 = f^1 - \alpha f^2 , \quad 
   (1+\alpha^2) \mathcal{B}u^2 = \alpha f^1 + f^2 \, ,
 \end{align*}
 which rewritten means that
 $u \in D \times D$ and $\mathcal{T} u = f$. Claim 1 is proved.
 
%  Since we know that $\mathcal{T}$ is closed, we have the following characterization for the
%  resolvent set:
%  \begin{align*}
%   \lambda \in \rho(\mathcal{T}) \quad \Leftrightarrow \quad \lambda I - \mathcal{T} : D \times D \to H
%  \times H \text{ is a bijection.}
%  \end{align*}
 Let now $t \in \setR$ with $t \not =0$ be given. For the statement of the lemma, we have 
 to show that 
 for every $f=(f^1,f^2) \in H \times H$
 there exists a unique element $u = (u^1,u^2) \in D \times D$ such that
 \begin{align*}
  \dot{\imath} t u^1 + \mathcal{A}u^1 + \alpha \mathcal{B}u^2 = f^1
  \qquad \text{and} \qquad
  \dot{\imath} t u^2 - \alpha \mathcal{A}u^1 + \mathcal{B}u^2 = f^2 \,.
 \end{align*}
 This is equivalent to
 \begin{align*}
  \dot{\imath} t (u^1 - \alpha u^2) + (1+\alpha^2) \mathcal{A}u^1 = f^1 - \alpha f^2
  \qquad \text{and} \qquad
  \dot{\imath} t (\alpha u^1 +u^2) + (1+\alpha^2) \mathcal{B}u^2 = \alpha f^1 + f^2 \,.
 \end{align*}
 Multiplying both equations by $-\dot{\imath}$ leads to
 \begin{align*}
  \mathcal{T}_0 {u^1 \choose u^2} = {- \dot{\imath} f^1 + \dot{\imath} \alpha f^2 \choose -\dot{\imath} \alpha f^1
  - \dot{\imath} f^2} \, ,
 \end{align*}
 where the linear operator $\mathcal{T}_0 : D \times D \subset H \times H \to H \times H$ is defined
 by
 \begin{align*}
  \mathcal{T}_0 =
  \begin{pmatrix}
   t - \dot{\imath} (1 + \alpha ^2) \mathcal{A} & - \alpha t
   \\
   \alpha t & t - \dot{\imath}(1+\alpha^2) \mathcal{B}
  \end{pmatrix}
  \,.
 \end{align*}
 In particular, we have to show that $\mathcal{T}_0$ is bijective. 
 To see this, we first notice
 that $\mathcal{T}_0$ is closed, which can be proved as in Claim 1. Next, we determine the adjoint of
 $\mathcal{T}_0$:
 \\
 Claim 2: The adjoint $\mathcal{T}_0^*$ of $\mathcal{T}_0$ is given by
 \begin{align*}
  \mathcal{T}_0^* =
   \begin{pmatrix}
   t + \dot{\imath} (1 + \alpha ^2) \mathcal{A} & \alpha t
   \\
   - \alpha t & t + \dot{\imath}(1+\alpha^2) \mathcal{B}
  \end{pmatrix}
 \end{align*}
 with domain of definition $D(\mathcal{T}_0^*) = D \times D$.
 \\
 Proof of Claim 2: Recall that the set $D(\mathcal{T}_0^*)$ is defined by
 \begin{align*}
  D(\mathcal{T}_0^*) = \left\{ u \in H \times H \, | \,  
  v \mapsto (\mathcal{T}_0v,u)_H 
  \text{ is continuous on } D \times D
\text{ with respect to }\norm{\cdot}_{H}
\right\} \, ,
 \end{align*}
 and $\mathcal{T}_0^* u$ for $u \in D(\mathcal{T}_0^*)$ is the unique vector such that
 %\begin{align*}
  $(\mathcal{T}_0v,u)_H = (v, \mathcal{T}_0^* u)_H$ for all $v \in D \times D$.
 %\end{align*}
 Let now $u \in D \times D$ be given. Since $\mathcal{A}$ and $\mathcal{B}$ are self-adjoint,
 we find for all elements $v \in D \times D$ the expression
 \begin{align*}
  (\mathcal{T}_0 v ,u)_H 
%   =& (tv^1 - \dot{\imath} (1+\alpha^2) \mathcal{A}v^1 - \alpha t v^2 ,u^1 )_H 
%   + (\alpha t v^1 + t v^2 - \dot{\imath} (1 + \alpha^2) \mathcal{B} v^2 ,u^2)_H
%   \\
  =&(v^1, t u^1 + \dot{\imath} (1+ \alpha^2) \mathcal{A}u^1 + \alpha t u^2)_H
  + (v^2,- \alpha t u^1 + t u^2 + \dot{\imath} (1+\alpha^2) \mathcal{B}u^2)_H \, .
 \end{align*}
 This implies $u \in D(\mathcal{T}_0^*)$ and
\begin{align*}
 \mathcal{T}_0^* u =
  \begin{pmatrix}
   t + \dot{\imath} (1 + \alpha ^2) \mathcal{A} & \alpha t
   \\
   - \alpha t & t + \dot{\imath}(1+\alpha^2) \mathcal{B}
  \end{pmatrix}
  { u^1 \choose u^2 } \, .
\end{align*}
Let now $u \in D(\mathcal{T}_0^*)$ be given. In particular, the mappings
\begin{align*}
 v^1 &\mapsto t(v^1,u^1)_H - \dot{\imath} (1+\alpha^2) ( \mathcal{A} v^1 ,u^1 )_H + \alpha t (v^1,u^2)_H
 \\
 v^2 &\mapsto -\alpha t (v^2,u^1)_H + t(v^2,u^2)_H - \dot{\imath} (1+\alpha^2) (\mathcal{B} v^2,u^2)_H
\end{align*}
are continuous with respect to $\norm{\cdot}_H$. We obtain
%\begin{align*}
 $u^1 \in D(\mathcal{A}^*)= D(\mathcal{A}) = D$ and  $u^2  \in D(\mathcal{B}^*) = D(\mathcal{B}) = D$,
%\end{align*}
hence $u \in D \times D$. Claim 2 is proved.

We now show that $\mathcal{T}_0$ and $\mathcal{T}_0^*$ are injective:
\\
Claim 3: For all $u \in D \times D$ we have
%\begin{align*}
 $\text{Re} (\mathcal{T}_0 u,u)_H = t \, \norm{u}^2_H$ and $\text{Re} (\mathcal{T}_0^* u,u)_H = t \, \norm{u}^2_H$.
%\end{align*}
\\
 Proof of Claim 3: For $u \in D \times D$ we find
\begin{align*}
 (\mathcal{T}_0u,u)_H 
%  =& t \, \norm{u^1}^2_H - \dot{\imath} (1+ \alpha^2) (\mathcal{A}u^1,u^1)_H - \alpha t (u^2,u^1)_H
%  + \alpha t (u^1,u^2)_H + t \, \norm{u^2}^2_H 
%  \\
%  &- \dot{\imath} (1+ \alpha^2) (\mathcal{B}u^2,u^2)_H
%  \\
 =& t \, \norm{u}^2_H + \alpha t \big( (u^1,u^2)_H - \overline{(u^1,u^2)_H} \, \big)
 %\\
 - \dot{\imath} (1+ \alpha^2)(\mathcal{A}u^1,u^1)_H - \dot{\imath} (1+ \alpha^2) (\mathcal{B}u^2,u^2)_H \,.
\end{align*}
Since $\mathcal{A}$ and $\mathcal{B}$ are self-adjoint, we obtain $\text{Re} (\mathcal{T}_0 u,u)_H = t \, \norm{u}^2_H$. The same argument gives the result for $\mathcal{T}_0^*$. Claim 3 is proved.
\\
Claim 4: $R(\mathcal{T}_0)$ is a closed subspace of $H \times H$.
\\
Proof of Claim 4: Let $(f_n) = (\mathcal{T}_0 u_n) \subset R(\mathcal{T}_0)$ be a sequence with 
$f_n \to f$. From Claim 3 it follows
\begin{align*}
 \abs{t} \, \norm{u_n - u_m}^2_H &= \abs{\text{Re} (\mathcal{T}_0 u_n -\mathcal{T}_0 u_m, u_n-u_m)_H}
 %\\
 \le \norm{f_n - f_m}_H \, \norm{u_n - u_m}_H \, ,
\end{align*}
thus $(u_n)$ is a Cauchy sequence and $u_n \to u$ for some $u \in H \times H$. Since $\mathcal{T}_0$ is closed, we obtain $u \in D \times D$ and $\mathcal{T}_0 u=f$. Claim 4 is proved.

Summarizing, we know that $\mathcal{T}_0$ is densely defined, closed, injective 
with closed range, and $\mathcal{T}_0^*$ is injective. With the help of the closed range 
theorem, we find
%\begin{align*}
 $R(\mathcal{T}_0) = N(\mathcal{T}_0^*)^\perp = \set{0}^\perp =  H\times H$.
%\end{align*}
In particular, $\mathcal{T}_0$ is a bijection and the lemma is proved.
\end{proof}
A combination of Lemmas \ref{lem:L1 is sectorial}, \ref{lem:L2 is sectorial}, \ref{lem:analytic semigroup of L0}, and \ref{lem:Spectrum of T} yields:
\begin{corollary}
 \label{lem:analytic semigroup of L0 continued}
 Let $\epsilon >0$ be small enough. Then the spectrum of $\mathcal{L}_0^\epsilon$ satisfies 
  $\sigma(\mathcal{L}_0^\epsilon) \cap \dot{\imath}\setR =\set{0}$. Moreover, the linear mapping
 %\begin{align*}
  $e^{t \mathcal{L}_0^\epsilon} - I : R(\mathcal{L}_0^\epsilon) \to R(\mathcal{L}_0^\epsilon)$
 %\end{align*}
 is a bijection for every $t>0$.
\end{corollary}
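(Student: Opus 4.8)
The plan is to prove the two assertions of the corollary in turn, leveraging the structural decomposition $\mathcal{L}_0^\epsilon$ has been given and the abstract Hilbert-space lemma just established. For the spectral identity $\sigma(\mathcal{L}_0^\epsilon) \cap \dot{\imath}\setR = \set{0}$, I would first recall from Lemma \ref{lem:L0 splitting} (together with the nontriviality of $N(\mathcal{L}_0^\epsilon)$) that $0$ does lie in $\sigma(\mathcal{L}_0^\epsilon)$, so the inclusion ``$\supseteq$'' is clear. For the reverse inclusion, I would apply Lemma \ref{lem:Spectrum of T} with $H = L^2$, $D = H^2$, $\mathcal{A} = \mathcal{L}_1^\epsilon$, and $\mathcal{B} = \mathcal{L}_2^\epsilon$: these operators are self-adjoint on $L^2$ with common dense domain $H^2$ by Lemmas \ref{lem:L1 is sectorial} and \ref{lem:L2 is sectorial}, and $\mathcal{L}_0^\epsilon$ has precisely the matrix form required in that lemma. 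Hence $\sigma(\mathcal{L}_0^\epsilon) \cap \dot{\imath}\setR \subset \set{0}$, and combining the two inclusions gives equality.

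For the second assertion, the idea is to transfer the spectral information to the semigroup via Lemma \ref{lem:analytic semigroup of L0}. By part (ii) of that lemma, $e^{t\mathcal{L}_0^\epsilon}$ maps $R(\mathcal{L}_0^\epsilon)$ into itself, so the restriction $e^{t\mathcal{L}_0^\epsilon}{}_{|R(\mathcal{L}_0^\epsilon)}$ is a well-defined bounded operator on $R(\mathcal{L}_0^\epsilon)$, and $e^{t\mathcal{L}_0^\epsilon}{}_{|R(\mathcal{L}_0^\epsilon)} - I$ is bounded on that space. To show it is a bijection, it suffices to show $1 \notin \sigma\big(e^{t\mathcal{L}_0^\epsilon}{}_{|R(\mathcal{L}_0^\epsilon)}\big)$. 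Using part (iii) of Lemma \ref{lem:analytic semigroup of L0}, we have
\begin{align*}
 \sigma\big(e^{t\mathcal{L}_0^\epsilon}{}_{|R(\mathcal{L}_0^\epsilon)}\big) \setminus \set{0} = e^{t(\sigma(\mathcal{L}_0^\epsilon)\setminus \set{0})} \, ,
\end{align*}
so it is enough to check that $1$ is not of the form $e^{t\mu}$ with $\mu \in \sigma(\mathcal{L}_0^\epsilon) \setminus \set{0}$. If $e^{t\mu} = 1$ with $t > 0$, then $t\mu \in 2\pi\dot{\imath}\setZ$, so $\mu \in \dot{\imath}\setR$; by the first part of the corollary this forces $\mu = 0$, a contradiction. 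Since also $0 \notin \sigma\big(e^{t\mathcal{L}_0^\epsilon}{}_{|R(\mathcal{L}_0^\epsilon)}\big)$ because a strongly continuous semigroup consists of injective-with-dense-range operators and, more to the point, $e^{t\mathcal{L}_0^\epsilon}{}_{|R(\mathcal{L}_0^\epsilon)}$ is invertible on the closed invariant subspace $R(\mathcal{L}_0^\epsilon) = X_2$ (it is the semigroup $e^{t\mathcal{A}_2}$ with $\mathcal{A}_2$ bounded-perturbation sectorial and $0 \in \rho(\mathcal{A}_2)$), we conclude $1 \notin \sigma\big(e^{t\mathcal{L}_0^\epsilon}{}_{|R(\mathcal{L}_0^\epsilon)}\big)$ and hence $e^{t\mathcal{L}_0^\epsilon} - I$ is a bijection on $R(\mathcal{L}_0^\epsilon)$.

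The only genuinely delicate point in this argument is making sure the ``$\setminus\set{0}$'' in the spectral mapping statement does not hide the value $1$; that is, one must separately argue $0 \notin \sigma\big(e^{t\mathcal{L}_0^\epsilon}{}_{|R(\mathcal{L}_0^\epsilon)}\big)$, which is what allows the conclusion $1 \notin \sigma$ rather than merely $1 \notin \sigma \setminus \set{0}$. This follows because, in the notation of the proof of Lemma \ref{lem:analytic semigroup of L0}, the restriction coincides with $e^{t\mathcal{A}_2}$ where $\sigma(\mathcal{A}_2) = \sigma(\mathcal{L}_0^\epsilon) \setminus \set{0}$ is bounded away from $\dot{\imath}\setR$, so $\mathcal{A}_2$ generates a group-like analytic semigroup whose operators are all invertible; alternatively one notes $0 \in e^{t\sigma(\mathcal{A}_2)}$ is impossible since the exponential never vanishes and $\sigma(\mathcal{A}_2)$ is a nonempty compact-or-left-half-plane set not containing points with arbitrarily large negative real part in the relevant bounded-perturbation setting. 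Once this is in hand, the statement of the corollary follows immediately by assembling the cited lemmas, with no further computation required.
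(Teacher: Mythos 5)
Your proof correctly assembles the cited lemmas and follows essentially the same route the paper takes (the paper gives no explicit proof, treating the corollary as an immediate combination of Lemmas \ref{lem:L1 is sectorial}, \ref{lem:L2 is sectorial}, \ref{lem:analytic semigroup of L0}, and \ref{lem:Spectrum of T}). The first paragraph on $\sigma(\mathcal{L}_0^\epsilon)\cap\dot{\imath}\setR=\set{0}$ and the spectral-mapping argument showing $1\notin e^{t(\sigma(\mathcal{L}_0^\epsilon)\setminus\set{0})}$ are both fine.

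However, your final paragraph introduces a spurious ``delicate point.'' You claim one must separately show $0\notin\sigma\bigl(e^{t\mathcal{L}_0^\epsilon}{}_{|R(\mathcal{L}_0^\epsilon)}\bigr)$ in order to upgrade $1\notin\sigma\setminus\set{0}$ to $1\notin\sigma$. That is not needed: since $1\neq 0$, membership of $1$ in a set is equivalent to membership of $1$ in that set with $0$ removed, so $1\notin\sigma\bigl(e^{t\mathcal{L}_0^\epsilon}{}_{|R(\mathcal{L}_0^\epsilon)}\bigr)\setminus\set{0}$ already yields $1\notin\sigma\bigl(e^{t\mathcal{L}_0^\epsilon}{}_{|R(\mathcal{L}_0^\epsilon)}\bigr)$ directly. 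The $\setminus\set{0}$ in the spectral mapping statement is only there to excuse the (possible) spectral value $0$ of the semigroup operator, which is irrelevant to the point $1$. Your attempted argument that $0\notin\sigma(e^{t\mathcal{A}_2})$ is itself not obviously correct as phrased (analytic semigroups of sectorial operators need not consist of invertible operators, and the generator here is unbounded), but since the step is unnecessary this does not compromise the proof. I would simply drop that final paragraph.
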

\section{The perturbation argument}
\label{sec:Nperturbation}
As already announced, we introduce (compared to \cite{alex_small}) an additional parameter in our evolution equation and replace the external magnetic field $h_\text{ext}$ by $\lambda h + \gamma$. We assume that
\begin{align*}
 h \in C^{0,\beta}(\setR,L^2) + C^{0,\beta}(\setR,L^\infty)\qquad (0<\beta<1)
\end{align*}
% is a $T$-periodic function such that
% \begin{align*}
%  \int_0^T h(t) \, dt = 0
% \end{align*}
is a (fixed) $T$-periodic function and that $\lambda$, $\gamma$ are real parameters. If for example the function $h$ belongs to $C^{0,\beta}(\setR)$ and
 %\begin{align*}
  $\int_0^T h(t) \, dt = 0$,
 %\end{align*}
 then $\gamma$ represents the time average of the external magnetic field. 
% With these parameters, $(LLG)_\epsilon$ reads as
% \begin{align*}
%  \begin{array}{ll}
%   \varphi_t = R^\epsilon_1(t,\varphi,\theta,\gamma,\lambda) \, , &\quad \lim_{x \to \pm \infty} \varphi(\cdot,x)= 0 \,,
%  \\
%  \theta_t = R^\epsilon_2(t,\varphi,\theta,\gamma,\lambda) \, , & \quad \lim_{x \to \pm \infty} \theta(\cdot,x)=\pm \frac{\pi}{2} \, .
%  \end{array}
% \end{align*}
Since $\theta_\epsilon \not\in H^2$, we decompose the angle $\theta$ by
%\begin{align*}
 $\theta = \theta_\epsilon + \vartheta$
%\end{align*}
with $\vartheta(t,\cdot) \in H^2$. Then $(LLG)_\epsilon$ reads as
\begin{align*}
 \begin{array}{ll}
  \varphi_t = R^\epsilon_1(t,\varphi,\theta_\epsilon + \vartheta,\gamma,\lambda) \, , & 
  \quad \lim_{x \to \pm \infty} \varphi(\cdot,x)= 0 \, ,
 \\
 \vartheta_t = R^\epsilon_2(t,\varphi,\theta_\epsilon + \vartheta,\gamma,\lambda) \, , & \quad \lim_{x \to \pm \infty} \vartheta(\cdot,x)= 0 \, ,
 \end{array}
\end{align*}
for $(\varphi,\vartheta)$. For $0<\beta<1$, we define the function spaces
\begin{align*}
 X = C^1([0,T],L^2) \cap C([0,T],H^2) \cap C^{0,\beta}_\beta(]0,T],H^2) \cap
 C^{1,\beta}_{\beta}(]0,T],L^2)
\end{align*}
and
%\begin{align*}
 $Y = C([0,T],L^2) \cap C^{0,\beta}_\beta(]0,T],L^2)$.
%\end{align*}
Similarly to \cite[Lemma 3.3]{alex_small}, we find the following statement:
\begin{lem}
\label{lem:Nexistence}
 Let $\theta_\epsilon$ be the phase function of a rescaled N{\'e}el wall. Then there exist an open neighborhood $U_\epsilon$ of $(0,0)$ in $H^2\times H^2$ and an open neighborhood $V_\epsilon$ of $(0,0)$ in $\setR \times \setR$ such that
\begin{align*}
  \begin{array}{ll}
  \varphi_t = R^\epsilon_1(t,\varphi,\theta_\epsilon + \vartheta,\gamma,\lambda) \, , & \quad
  \varphi(0,\cdot)={\varphi_0} \, ,
 \\
 \vartheta_t = R^\epsilon_2(t,\varphi,\theta_\epsilon + \vartheta,\gamma,\lambda) \, , & \quad \vartheta(0,\cdot)={\vartheta_0} \, ,
 \end{array}
\end{align*}
possesses a unique solution $(\varphi,\vartheta)=\big(\varphi(\cdot,{\varphi_0},{\vartheta_0},\gamma,\lambda),\vartheta(\cdot,{\varphi_0},{\vartheta_0},\gamma,\lambda)\big)$ in $X \times X$ close to $(0,0)$ for all $({\varphi_0},{\vartheta_0}) \in U_\epsilon$ and $(\gamma,\lambda) \in V_\epsilon$. Moreover, the mapping
\begin{align*}
 ({\varphi_0},{\vartheta_0},\gamma,\lambda) \mapsto \big(\varphi(\cdot,{\varphi_0},{\vartheta_0},\gamma,\lambda),\vartheta(\cdot,{\varphi_0},{\vartheta_0},\gamma,\lambda)\big)
\end{align*}
is smooth and
\begin{gather*}
 {D_{({\varphi_0},{\vartheta_0})} \varphi(T,0,0,0,0) \choose D_{({\varphi_0},{\vartheta_0})} \vartheta(T,0,0,0,0)}{h_1 \choose h_2} =
 e^{T \mathcal{L}_0^\epsilon} {h_1 \choose h_2} \, ,
 \\
 {D_\gamma \varphi (T,0,0,0,0) \choose D_\gamma \vartheta (T,0,0,0,0)} h_3= 
 \frac{h_3}{\epsilon} \int_0^T e^{(T-s) \mathcal{L}_0^\epsilon} {\alpha \cos \theta_\epsilon \choose \cos \theta_\epsilon} \, ds
\end{gather*}
for all $h_1,h_2 \in H^2$, $h_3 \in \setR$.
\end{lem}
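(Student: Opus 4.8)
The plan is to treat this exactly as the N{\'e}el-wall counterpart of \cite[Lemma 3.3]{alex_small}: after the shift $\theta=\theta_\epsilon+\vartheta$ one recasts $(LLG)_\epsilon$ as an abstract semilinear parabolic Cauchy problem, solves it with the optimal-regularity theory of analytic semigroups, and then reads off the derivatives of the solution operator at the origin. Writing $u=(\varphi,\vartheta)$ and $h_\text{ext}=\lambda h+\gamma$, the system takes the form $u_t=\mathcal{L}_0^\epsilon u+G^\epsilon(t,u,\gamma,\lambda)$, $u(0,\cdot)=(\varphi_0,\vartheta_0)$, where $\mathcal{L}_0^\epsilon:H^2\times H^2\subset L^2\times L^2\to L^2\times L^2$ is the linearization defined above (sectorial, because $\mathcal{L}_0^\epsilon=\kappa\mathcal{L}+\mathcal{B}^\epsilon$ with $\mathcal{L}$ sectorial by Lemma \ref{lem:Laplacian is sectorial} and $\mathcal{B}^\epsilon$ bounded), and $G^\epsilon$ is the nonlinear remainder of $R^\epsilon_1,R^\epsilon_2$, normalized so that $G^\epsilon(t,0,0,0)\equiv 0$ and $D_u G^\epsilon(t,0,0,0)=0$.

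The first and decisive step is to verify that $G^\epsilon$ is a $C^\infty$ map with the right H\"older-in-time behaviour between the relevant function spaces. Here I would use the Sobolev embedding $H^2(\setR)\hookrightarrow C^1_b(\setR)$ together with the Banach-algebra property of $H^2(\setR)$, so that the superposition operators generated by the real-analytic functions $\sin,\cos,\sec,\tan$ (evaluated near $\varphi=0$ and at $\theta=\theta_\epsilon+\vartheta$) act smoothly on a neighbourhood of the origin; the $\theta_\epsilon$-dependent coefficients and the stray-field contributions $\mathcal{S}_\epsilon[\,\cdot\,]$ land in $L^2$ because $\theta'_\epsilon,\cos\theta_\epsilon\in H^k$ for all $k$ (Lemma \ref{lem:Nphasefunction}), the a priori bounds of Lemma \ref{lem:apriori} are available, and $\mathcal{S}_\epsilon$ is bounded on every $H^k$ (Lemma \ref{lem:reduced stray field}). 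The genuine $t$-dependence of $G^\epsilon$ enters only through $\lambda h+\gamma$ multiplied by bounded, smooth, $t$-independent factors such as $\cos\theta_\epsilon$ and $\cos\theta_\epsilon\sec\varphi$, so the assumption $h\in C^{0,\beta}(\setR,L^2)+C^{0,\beta}(\setR,L^\infty)$ transfers to the H\"older-in-time class needed to run the theory in $C^{0,\beta}_\beta(]0,T],L^2)$ and $C^{1,\beta}_\beta(]0,T],L^2)$.

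Granting this, I would invoke the abstract existence, uniqueness and smooth-dependence theorem for semilinear parabolic equations with sectorial generator and H\"older data (see \cite{lunardi}): for $(\varphi_0,\vartheta_0)$ in a small neighbourhood $U_\epsilon$ of $0$ in $H^2\times H^2$ and $(\gamma,\lambda)$ in a small neighbourhood $V_\epsilon$ of $0$ in $\setR\times\setR$ one obtains a unique solution in $X\times X$ close to $0$, and the solution operator $(\varphi_0,\vartheta_0,\gamma,\lambda)\mapsto(\varphi,\vartheta)$ is $C^\infty$, the smoothness coming from the implicit function theorem applied to the substitution operator built from $G^\epsilon$ via the variation-of-constants formula. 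Differentiating at $(0,0,0,0)$ and evaluating at $t=T$ then gives both formulas: the derivative in $(\varphi_0,\vartheta_0)$ in a direction $(h_1,h_2)$ solves the variational equation $v_t=\mathcal{L}_0^\epsilon v$ with $v(0)=(h_1,h_2)$ --- since $\mathcal{L}_0^\epsilon$ is, by construction, the Fr\'echet derivative of the right-hand side at the stationary N{\'e}el wall --- hence equals $e^{T\mathcal{L}_0^\epsilon}(h_1,h_2)$; and the derivative in $\gamma$ in a direction $h_3\in\setR$ solves $v_t=\mathcal{L}_0^\epsilon v+\tfrac{h_3}{\epsilon}(\alpha\cos\theta_\epsilon,\cos\theta_\epsilon)^T$ with $v(0)=0$, because among the $h_\text{ext}$-linear terms of $R^\epsilon_1,R^\epsilon_2$ only $\tfrac{\alpha}{\epsilon}h_\text{ext}\cos\theta$ and $\tfrac{1}{\epsilon}h_\text{ext}\cos\theta\sec\varphi$ survive at $(\varphi,\theta)=(0,\theta_\epsilon)$ (the ones carrying $\sin\varphi$ or $\tan\varphi$ vanish there), so Duhamel's formula produces the claimed integral.

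I expect the real obstacle to be the first step: selecting the function spaces carefully --- in particular the weighted H\"older spaces $C^{0,\beta}_\beta(]0,T],\cdot)$ and $C^{1,\beta}_\beta(]0,T],\cdot)$ near $t=0$ --- so that $G^\epsilon$ is genuinely $C^\infty$ between the appropriate spaces while the abstract theorem still applies with initial data merely in $H^2\times H^2$, all the while keeping the $\theta_\epsilon$-dependent coefficients (which are not themselves in $H^2$, but whose relevant products with $H^2$-functions are, thanks to Lemmas \ref{lem:Nphasefunction} and \ref{lem:apriori}) controlled in $L^2$. Once the regularity of $G^\epsilon$ is secured, everything else is a routine application of the cited semigroup machinery and the implicit function theorem, and the two derivative identities reduce to isolating the part of $R^\epsilon_1,R^\epsilon_2$ that is linear in $h_\text{ext}$ at the rescaled N{\'e}el wall.
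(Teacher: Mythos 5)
Your proposal is correct and follows essentially the same route as the paper: an implicit function theorem argument built on the sectoriality of $\mathcal{L}_0^\epsilon$, the optimal regularity result from \cite{lunardi}, smoothness of the nonlinear right-hand side via $H^1\hookrightarrow L^\infty$ and Lemma~\ref{lem:Nphasefunction}, and the variation-of-constants formula for the two derivative identities. The only cosmetic difference is that the paper applies the IFT directly to the residual map $G(\varphi,\vartheta,\varphi_0,\vartheta_0,\gamma,\lambda)=\big(\varphi_t-R_1^\epsilon,\,\vartheta_t-R_2^\epsilon,\,\varphi(0)-\varphi_0,\,\vartheta(0)-\vartheta_0\big)$ from $X_0\times X\times H^2\times H^2\times\setR^2$ to $Y\times Y\times H^2\times H^2$ rather than first splitting off a semilinear remainder $G^\epsilon$ and invoking abstract semilinear theory, but the decisive invertibility claim about the linearized evolution is identical.
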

\begin{proof}
 Thanks to the embedding $H^1 \hookrightarrow L^\infty$, we can choose an open neighborhood $X_0$ of $0$ in X such that
%\begin{align*}
 $\sup_{0 \le t \le T} \norm{\varphi(t,\cdot)}_{L^\infty} \le \frac{\pi}{4}$
%\end{align*}
for all $\varphi \in X_0$. We now define $F:X_0\times X \times \setR \times \setR \to Y\times Y$ by
\begin{align*}
 F(\varphi,\vartheta,\gamma,\lambda) = \big( \varphi_t - R^\epsilon_1(\cdot,\varphi,\theta_\epsilon + \vartheta,\gamma,\lambda) , \vartheta_t - R^\epsilon_2(\cdot,\varphi,\theta_\epsilon + \vartheta,\gamma,\lambda) \big)
\end{align*}
and $G:X_0\times X \times H^2\times H^2 \times \setR \times \setR \to Y\times Y \times H^2 \times H^2$ by
\begin{align*}
 G(\varphi,\vartheta,{\varphi_0},{\vartheta_0},\gamma,\lambda) = \big( F(\varphi,\vartheta,\gamma,\lambda) , \varphi(0)-{\varphi_0},\vartheta(0)-{\vartheta_0}\big) \,.
\end{align*}
The embedding $H^1 \hookrightarrow L^\infty$, the choice of $X_0$, and Lemma \ref{lem:Nphasefunction} imply that $F$ and $G$ are well-defined and smooth. For example, we use the fact $\cos \theta_\epsilon \in L^2$ to see that
\begin{align*}
 \abs{\cos(\theta_\epsilon + \vartheta)} \le \abs{\cos \theta_\epsilon \cos \vartheta} + \abs{\sin \theta_\epsilon \sin \vartheta} 
\le \abs{\cos \theta_\epsilon} + \abs{\vartheta} \quad \in L^2 
\end{align*}
for every $\vartheta \in L^2$.
We already know that  $G(0,0,0,0,0,0) = (0,0,0,0)$
since the rescaled N{\'e}el wall is a stationary solution for LLG with $h_\text{ext}=0$. Moreover, the equation
\begin{align*}
 D_{(\varphi,\vartheta)}G(0,0,0,0,0,0) [\varphi,\vartheta] = (f_1,f_2,g_1,g_2)
\end{align*}
is equivalent to
\begin{align*}
 { \varphi_t \choose \vartheta_t } = \mathcal{L}^\epsilon_0 { \varphi(t) \choose \vartheta(t)} + { f_1(t) \choose f_2(t) } \, , \quad {\varphi(0) \choose \vartheta(0) } = { g_1 \choose g_2 } \,.
\end{align*}
Since $\mathcal{L}^\epsilon_0$ is sectorial, we obtain together with the optimal regularity result \cite[Corollary 4.3.6]{lunardi} that
\begin{align*}
 D_{(\varphi,\vartheta)}G(0,0,0,0,0,0): X \times X \to Y \times Y \times H^2 \times H^2
\end{align*}
is invertible. With the help of the implicit function theorem, we find open neighborhoods $U_\epsilon$ of $(0,0)$ in $H^2\times H^2$, $V_\epsilon$ of $(0,0)$ in $\setR \times \setR$, and a smooth mapping $(\varphi,\vartheta) : U_\epsilon \times V_\epsilon \to X_0 \times X$ such that
\begin{align*}
 \big(\varphi(\cdot,0,0,0,0),\vartheta(\cdot,0,0,0,0)\big) = (0,0)
\end{align*}
and
\begin{align*}
 G\big(\varphi(\cdot,{\varphi_0},{\vartheta_0},\gamma,\lambda),\vartheta(\cdot,{\varphi_0},{\vartheta_0},\gamma,\lambda),{\varphi_0},{\vartheta_0},\gamma,\lambda\big) = (0,0,0,0)
\end{align*}
for all $({\varphi_0},{\vartheta_0}) \in U_\epsilon$ and $(\gamma,\lambda) \in V_\epsilon$.
This in particular implies that 
\begin{align*}
 \big(\varphi(\cdot,{\varphi_0},{\vartheta_0},\gamma,\lambda),\vartheta(\cdot,{\varphi_0},{\vartheta_0},\gamma,\lambda)\big) \quad \in X \times X
\end{align*}
is the desired solution.

It remains to calculate the derivatives. With the help of the chain rule, we find
\begin{align*}
 (0,0,0,0) =& D_\varphi G(0) \circ D_{({\varphi_0},{\vartheta_0})} \varphi(0) [h_1,h_2] + D_\vartheta G(0) \circ D_{({\varphi_0},{\vartheta_0})} \vartheta(0) [h_1,h_2]  
+ D_{\varphi_0} G(0) [h_1] 
\\
&+ D_{\vartheta_0} G(0) [h_2]
\end{align*}
for all $h_1,h_2 \in H^2$. In particular, the function $v$ defined by
\begin{align*}
v = { D_{({\varphi_0},{\vartheta_0})}\varphi(\cdot,0,0,0,0) \choose D_{({\varphi_0},{\vartheta_0})}\vartheta(\cdot,0,0,0,0)}{h_1 \choose h_2}
\end{align*}
is a solution of
%\begin{align*}
 $v_t = \mathcal{L}^\epsilon_0 v$ and $v(0) = (h_1, h_2)^T$,
%\end{align*}
hence $v(t) = e^{t \mathcal{L}^\epsilon_0} {h_1 \choose h_2}$ for all $0\le t \le T$. Similarly, we see that
\begin{align*}
 w = {D_\gamma \varphi (\cdot,0,0,0,0) \choose D_\gamma \vartheta (\cdot,0,0,0,0)} h_3
\end{align*}
is a solution of
\begin{align*}
 w_t = \mathcal{L}^\epsilon_0 w + \frac{h_3}{\epsilon}{\alpha \cos \theta_\epsilon \choose \cos \theta_\epsilon } \qquad \text{and} \qquad w(0)={ 0 \choose 0} \, .
\end{align*}
The variation of constants formula yields
\begin{align*}
 w(t) = \frac{h_3}{\epsilon} \int_0^t e^{(t-s) \mathcal{L}^\epsilon_0} { \alpha \, \cos \theta_\epsilon \choose \cos \theta_\epsilon } \, ds
\end{align*}
for all $0\le t \le T$.
The lemma is proved.
\end{proof}
For $(\varphi,\vartheta) = \big(\varphi(\cdot,{\varphi_0},{\vartheta_0},\gamma,\lambda),\vartheta(\cdot,{\varphi_0},{\vartheta_0},\gamma,\lambda)\big)$, we make use of the following equivalence:
{\itshape{
\begin{center}
 $(\varphi,\vartheta)$ defines a $T$-periodic solution for $(LLG)_\epsilon$ with $h_\text{ext} = \lambda h + \gamma$
\\
 if and only if $\big(\varphi(T),\vartheta(T)\big) = ({\varphi_0},{\vartheta_0})$.
\end{center}
}}
\noindent Because of that, we define the smooth function
\begin{align*}
 F_\epsilon: \, &U_\epsilon \times V_\epsilon \subset H^2 \times H^2 \times \setR \times \setR \to H^2 \times H^2 \times \setR  :
 \\
 &({\varphi_0},{\vartheta_0},\gamma,\lambda) \mapsto
 \big( \varphi(T,{\varphi_0},{\vartheta_0},\gamma,\lambda) - {\varphi_0}, \,\vartheta(T,{\varphi_0},{\vartheta_0},\gamma,\lambda) - {\vartheta_0}, {\vartheta_0}(0)\big)
\end{align*}
and remark that $F_\epsilon(0,0,0,0) = (0,0,0)$. To solve the equation $F_\epsilon({\varphi_0},{\vartheta_0},\gamma,\lambda)=(0,0,0)$
 for $\lambda \not=0$, we use the implicit function theorem and the statement of the next lemma.
\begin{lem}
 The linear operator $D_{({\varphi_0},{\vartheta_0},\gamma)}F_\epsilon(0,0,0,0): H^2 \times H^2 \times \setR \to H^2
 \times H^2 \times \setR$
 is invertible, provided $\epsilon>0$ is small enough.
\end{lem}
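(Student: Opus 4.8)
The plan is to make the derivative $D_{({\varphi_0},{\vartheta_0},\gamma)}F_\epsilon(0,0,0,0)$ explicit with the help of Lemma \ref{lem:Nexistence} and then to invert it by hand, using the direct sum decomposition $L^2 \times L^2 = N(\mathcal{L}_0^\epsilon) \oplus R(\mathcal{L}_0^\epsilon)$ from Lemma \ref{lem:L0 splitting} and the bijectivity of $e^{T\mathcal{L}_0^\epsilon} - I$ on $R(\mathcal{L}_0^\epsilon)$ from Corollary \ref{lem:analytic semigroup of L0 continued}. Set $\zeta_\epsilon = \int_0^T e^{(T-s)\mathcal{L}_0^\epsilon} {\alpha \cos \theta_\epsilon \choose \cos \theta_\epsilon} \, ds$; this vector lies in $H^2 \times H^2$ since $\cos \theta_\epsilon \in H^2$ and the analytic semigroup preserves $D(\mathcal{L}_0^\epsilon) = H^2 \times H^2$. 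Because the third component of $F_\epsilon$ is the linear functional $\vartheta_0 \mapsto \vartheta_0(0)$ (which is bounded thanks to $H^2 \hookrightarrow L^\infty$), the formulas of Lemma \ref{lem:Nexistence} yield
\begin{align*}
 D_{({\varphi_0},{\vartheta_0},\gamma)}F_\epsilon(0,0,0,0)(h_1,h_2,h_3) = \Big( (e^{T\mathcal{L}_0^\epsilon}-I){h_1 \choose h_2} + \tfrac{h_3}{\epsilon}\,\zeta_\epsilon \, ,\; h_2(0) \Big) \, .
\end{align*}

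The next step is to record the relevant structure of $\mathcal{L}_0^\epsilon$. Since $\theta'_\epsilon \in H^2$, the splitting of Lemma \ref{lem:L0 splitting} restricts to $H^2 \times H^2 = \big( \set{0} \times \text{span}\set{\theta'_\epsilon} \big) \oplus \big( H^2 \times H^2 \cap R(\mathcal{L}_0^\epsilon) \big)$. On the first summand $e^{T\mathcal{L}_0^\epsilon}-I$ vanishes by Lemma \ref{lem:analytic semigroup of L0}(i), while on the second it is a bounded bijection onto $H^2 \times H^2 \cap R(\mathcal{L}_0^\epsilon)$: by Corollary \ref{lem:analytic semigroup of L0 continued} it is bijective on $R(\mathcal{L}_0^\epsilon)$, and since its inverse is bounded on $R(\mathcal{L}_0^\epsilon)$ and commutes with the part of $\mathcal{L}_0^\epsilon$ there, it preserves $D(\mathcal{L}_0^\epsilon) \cap R(\mathcal{L}_0^\epsilon)$. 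By Lemma \ref{lem:L0 splitting}(ii) the projection onto the kernel along the range is $P(f_1,f_2) = \norm{\theta'_\epsilon}_{L^2}^{-2}(\alpha f_1 + f_2,\theta'_\epsilon)_{L^2}(0,\theta'_\epsilon)$, and by Lemma \ref{lem:analytic semigroup of L0}(i),(ii) it commutes with $e^{t\mathcal{L}_0^\epsilon}$. The decisive computation is then
\begin{align*}
 P\zeta_\epsilon = T \, P{\alpha \cos \theta_\epsilon \choose \cos \theta_\epsilon} = \frac{(1+\alpha^2)(\cos \theta_\epsilon,\theta'_\epsilon)_{L^2}}{\norm{\theta'_\epsilon}_{L^2}^2}\, T\, (0,\theta'_\epsilon) \, ,
\end{align*}
together with $(\cos \theta_\epsilon,\theta'_\epsilon)_{L^2} = \int (\sin \theta_\epsilon)' = 2$ by Lemma \ref{lem:Nphasefunction}(iii); hence $P\zeta_\epsilon \not= 0$, that is, $\zeta_\epsilon$ has a nonzero component in the one-dimensional kernel direction $(0,\theta'_\epsilon)$.

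With this in hand, invertibility follows by back-substitution. Given $(g_1,g_2,c) \in H^2 \times H^2 \times \setR$, I would first choose $h_3$ so that $\tfrac{h_3}{\epsilon}P\zeta_\epsilon = P(g_1,g_2)$; by the displayed formula this forces $h_3 = \frac{\epsilon}{2T(1+\alpha^2)}(\alpha g_1 + g_2,\theta'_\epsilon)_{L^2}$, uniquely. Then $(g_1,g_2) - \tfrac{h_3}{\epsilon}\zeta_\epsilon$ has vanishing $P$-component, hence lies in $H^2 \times H^2 \cap R(\mathcal{L}_0^\epsilon)$, so applying $(e^{T\mathcal{L}_0^\epsilon}-I)^{-1}$ on that space yields a unique $(k_1,k_2) \in H^2 \times H^2 \cap R(\mathcal{L}_0^\epsilon)$ with $(e^{T\mathcal{L}_0^\epsilon}-I){k_1 \choose k_2} = (g_1,g_2) - \tfrac{h_3}{\epsilon}\zeta_\epsilon$. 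Finally I would use the third equation to fix the remaining kernel freedom: set $t = (c - k_2(0))/\theta'_\epsilon(0)$, which is legitimate because $\theta'_\epsilon(0) > 0$ by Lemma \ref{lem:Nphasefunction}(i), and put $(h_1,h_2) = (k_1,k_2) + t(0,\theta'_\epsilon)$. Since $(e^{T\mathcal{L}_0^\epsilon}-I)$ kills the added kernel term and $h_2(0) = k_2(0) + t\theta'_\epsilon(0) = c$, the triple $(h_1,h_2,h_3)$ is a preimage of $(g_1,g_2,c)$; running the same argument in reverse shows it is the only one. As all the maps involved are bounded, the bounded inverse theorem then gives that $D_{({\varphi_0},{\vartheta_0},\gamma)}F_\epsilon(0,0,0,0)$ is an isomorphism. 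Everything above is valid once $\epsilon$ is small enough for Lemmas \ref{lem:L1 is sectorial}, \ref{lem:L0 splitting} and Corollary \ref{lem:analytic semigroup of L0 continued} to hold.

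The step I expect to be the heart of the matter is the non-degeneracy $P\zeta_\epsilon \not= 0$: the operator $e^{T\mathcal{L}_0^\epsilon}-I$ by itself is blind to the kernel $\set{0} \times \text{span}\set{\theta'_\epsilon}$ created by translation invariance, so surjectivity onto that direction must come entirely from the $\gamma$-parameter, and this hinges on the identity $(\cos \theta_\epsilon,\theta'_\epsilon)_{L^2} = 2$ produced by the $180^\circ$ rotation of the N{\'e}el wall. This is precisely why the extra parameter $\gamma$ was introduced. A secondary point needing care is that the range equation be solvable at the $H^2$ level and not merely in $L^2$, which is why one checks that $(e^{T\mathcal{L}_0^\epsilon}-I)^{-1}$ preserves $D(\mathcal{L}_0^\epsilon)$.
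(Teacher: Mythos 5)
Your proposal is correct and follows essentially the same route as the paper: explicit formula for the derivative via Lemma \ref{lem:Nexistence}, the kernel/range splitting of Lemma \ref{lem:L0 splitting}, bijectivity of $e^{T\mathcal{L}_0^\epsilon}-I$ on the range from Corollary \ref{lem:analytic semigroup of L0 continued}, determining $h_3$ from the kernel component, $(h_1,h_2)$ from the range component, and using $\theta'_\epsilon(0)>0$ to fix the remaining kernel freedom via the condition $h_2(0)=r$. Two small remarks: your explicit computation $(\cos\theta_\epsilon,\theta'_\epsilon)_{L^2}=\int(\sin\theta_\epsilon)'=2$ is a nice, crisper way of seeing the non-degeneracy that the paper merely asserts as $t(\alpha\cos\theta_\epsilon,\cos\theta_\epsilon)>0$; and for the claim that $(e^{T\mathcal{L}_0^\epsilon}-I)^{-1}$ preserves $H^2\times H^2\cap R(\mathcal{L}_0^\epsilon)$, the paper's argument via the smoothing property ($e^{T\mathcal{L}_0^\epsilon}u\in H^2$, so $u=e^{T\mathcal{L}_0^\epsilon}u-f\in H^2$ when $f\in H^2$) is a bit more direct than your commutation argument, which implicitly uses the same fact.
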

\begin{proof}
 With the help of Lemma \ref{lem:Nexistence}, we obtain the identity
\begin{align*}
 D_{({\varphi_0},{\vartheta_0},\gamma)}F_\epsilon(0,0,0,0)[h_1,h_2,h_3]
 %\\
 =&\bigg( e^{T \mathcal{L}_0^\epsilon} {h_1 \choose
 h_2} - {h_1 \choose h_2} + \frac{h_3}{\epsilon} \int_0^T e^{(T-s) \mathcal{L}_0^\epsilon} {\alpha
 \cos \theta_\epsilon \choose \cos \theta_\epsilon} \, ds \, , \, h_2(0) \bigg)
\end{align*}
for all $h_1,h_2 \in H^2$, $h_3 \in \setR$.
Moreover, Lemma \ref{lem:L0 splitting} induces the splitting
 \begin{align*}
  H^2 \times H^2 = X_1 \oplus X_2 =  N(\mathcal{L}_0^\epsilon) \oplus (H^2 \times H^2) \cap R(\mathcal{L}_0^\epsilon)
 \end{align*}
 with projections $P_1$ and $P_2$ defined by
 \begin{align*}
  P_1 &: H^2 \times H^2 \to X_1: u={u_1 \choose u_2} \mapsto 
 {0 \choose t(u) \theta'_\epsilon}\,,\qquad P_2=I-P_1 \,,
%  \\
%  P_2 &: H^2 \times H^2 \to X_2 : u={u_1 \choose u_2} \mapsto {u_1 \choose u_2 -
%  t(u) \theta'_\epsilon}\,,
 \end{align*}
 where 
  $t(u) = (\alpha u_1 + u_2,\theta'_\epsilon)_{L^2}/\norm{\theta'_\epsilon}_{L^2}^2$.
 From Corollary \ref{lem:analytic semigroup of L0 continued} we know that $e^{T \mathcal{L}_0^\epsilon} -I : R(\mathcal{L}_0^\epsilon) \to R(\mathcal{L}_0^\epsilon)$ is invertible, and thanks to the smoothing property of $e^{T \mathcal{L}_0^\epsilon}$, that is  $e^{T \mathcal{L}_0^\epsilon}(L^2\times L^2) \subset H^2\times H^2$,
we obtain that $e^{T \mathcal{L}_0^\epsilon} -I : X_2 \to X_2$
is an isomorphism. Moreover, we have $e^{T \mathcal{L}_0^\epsilon} u = u$ for all  $u \in X_1$. For a given element $(f_1,f_2,r) \in H^2 \times H^2 \times \setR$, the equation $D_{({\varphi_0},{\vartheta_0},\gamma)}F_\epsilon(0,0,0,0)[h_1,h_2,h_3] = (f_1,f_2,r)$
 is equivalent to
\begin{align*}
 P_1 { f_1 \choose f_2 } &= \frac{h_3}{\epsilon} \int_0^T e^{(T-s) \mathcal{L}_0^\epsilon} P_1 {\alpha \cos \theta_\epsilon \choose \cos \theta_\epsilon } \, ds \, ,
\\
P_2 { f_1 \choose f_2 } &= \big( e^{T \mathcal{L}_0^\epsilon} - I \big) P_2 { h_1 \choose h_2}
+ \frac{h_3}{\epsilon} \int_0^T e^{(T-s) \mathcal{L}_0^\epsilon} P_2 {\alpha \cos \theta_\epsilon \choose
\cos \theta_\epsilon } \, ,
\\
h_2(0)&= r \,.
\end{align*}
The properties of $\theta_\epsilon$ (see Lemma \ref{lem:Nphasefunction}) imply that $t(\alpha \cos \theta_\epsilon, \cos \theta_\epsilon) >0$ and we find
\begin{align*}
 \frac{h_3}{\epsilon} \int_0^T e^{(T-s) \mathcal{L}_0^\epsilon} P_1 {\alpha \cos \theta_\epsilon \choose \cos \theta_\epsilon } \, ds = \frac{T h_3}{\epsilon} { 0 \choose t(\alpha \cos \theta_\epsilon, \cos \theta_\epsilon ) \theta'_\epsilon } \,.
\end{align*}
In particular, $h_3$ is uniquely determined by $P_1{f_1\choose f_2}$. We now also obtain a unique $P_2 { h_1 \choose h_2 }$. The requirement $h_2(0) = r$ fixes $P_1 {h_1 \choose h_2}$ since
\begin{align*}
 { h_1 \choose h_2 } = P_1 {h_1 \choose h_2} + P_2 {h_1 \choose h_2} = {0 \choose t \theta'_\epsilon}
 + P_2 {h_1 \choose h_2}
\end{align*}
and $\theta'_\epsilon(0) >0$. The lemma is proved.
\end{proof}
Finally, we can state the main result of this paper.
\begin{theorem}
\label{thm:3}
Let $h \in C^{0,\beta}(\setR,L^2) + C^{0,\beta}(\setR,L^\infty)$ $(0<\beta<1)$ be a $T$-periodic function and $\epsilon >0$ be small enough. Then there exist an open ball 
$B \subset \setR$ centered at $0$ and smooth functions ${\varphi_0},{\vartheta_0}: B \to H^2$, $\gamma : B \to \setR$ such that
\begin{align*}
  (\varphi,\theta) = \big(\varphi(\cdot,{\varphi_0}(\lambda),{\vartheta_0}(\lambda),\gamma(\lambda),\lambda), \theta_\epsilon + 
  \vartheta(\cdot,{\varphi_0}(\lambda),{\vartheta_0}(\lambda),\gamma(\lambda),\lambda) \big)
\end{align*}
is a $T$-periodic solution for $(LLG)_\epsilon$ with external magnetic field $h_\text{ext} = \lambda h+ \gamma(\lambda)$ for every $\lambda \in B$. In particular, $m=(\cos \varphi \cos \theta, \cos \varphi \sin \theta , \sin \varphi)$ is a $T$-periodic solution for the rescaled LLG with external magnetic field $h_\text{ext}$.
By scaling the statement carries over to the original LLG.
\end{theorem}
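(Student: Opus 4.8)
The plan is to obtain Theorem \ref{thm:3} by applying the implicit function theorem to the smooth map $F_\epsilon$ introduced just above the statement, using the two facts already secured: first, by Lemma \ref{lem:Nexistence}, $F_\epsilon$ is well defined and smooth on $U_\epsilon\times V_\epsilon$ with $F_\epsilon(0,0,0,0)=(0,0,0)$, since the rescaled N\'eel wall is a stationary solution of $(LLG)_\epsilon$ at $h_\text{ext}=0$; second, by the preceding lemma, the partial derivative $D_{(\varphi_0,\vartheta_0,\gamma)}F_\epsilon(0,0,0,0)$ is an isomorphism of $H^2\times H^2\times\setR$ for $\epsilon>0$ small. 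The implicit function theorem then yields an open ball $B\subset\setR$ centred at $0$ and smooth maps $\varphi_0,\vartheta_0:B\to H^2$, $\gamma:B\to\setR$ with $(\varphi_0(0),\vartheta_0(0),\gamma(0))=(0,0,0)$ and
\[
 F_\epsilon\big(\varphi_0(\lambda),\vartheta_0(\lambda),\gamma(\lambda),\lambda\big)=(0,0,0)\qquad\text{for all }\lambda\in B.
\]
Shrinking $B$ if necessary, I would arrange that $(\varphi_0(\lambda),\vartheta_0(\lambda))\in U_\epsilon$ and $(\gamma(\lambda),\lambda)\in V_\epsilon$ for $\lambda\in B$, so that the solution $(\varphi,\vartheta)=\big(\varphi(\cdot,\varphi_0(\lambda),\vartheta_0(\lambda),\gamma(\lambda),\lambda),\vartheta(\cdot,\varphi_0(\lambda),\vartheta_0(\lambda),\gamma(\lambda),\lambda)\big)$ furnished by Lemma \ref{lem:Nexistence} is available.

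Next I would read off periodicity. By the definition of $F_\epsilon$, the vanishing of its first two components is exactly $\varphi(T)=\varphi_0(\lambda)$ and $\vartheta(T)=\vartheta_0(\lambda)$, which by the equivalence recorded before the statement means that $(\varphi,\vartheta)$ extends to a $T$-periodic solution of $(LLG)_\epsilon$ with $h_\text{ext}=\lambda h+\gamma(\lambda)$; the vanishing of the third component, $\vartheta_0(\lambda)(0)=0$, is the normalisation that pins down the translation degree of freedom encoded in $N(\mathcal{L}_0^\epsilon)=\{0\}\times\operatorname{span}\{\theta'_\epsilon\}$ (Lemma \ref{lem:L0 splitting}), and is precisely the reason the additional scalar parameter $\gamma$ had to be introduced. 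Setting $\theta=\theta_\epsilon+\vartheta$ and passing back through the spherical parametrisation $m=(\cos\varphi\cos\theta,\cos\varphi\sin\theta,\sin\varphi)$ — legitimate because the bound $\|\varphi(t,\cdot)\|_{L^\infty}\le\pi/4$ built into the construction keeps $\varphi$ in the range where $M(m)$ is invertible, so that multiplication by $M(m)$ makes the passage to $(LLG)_\epsilon$ an equivalence — produces a $T$-periodic solution of the rescaled LLG, and undoing the rescalings $x\mapsto\tfrac{\delta}{Q}x$, $t\mapsto\tfrac{1}{Q}t$ transfers the conclusion to the original LLG.

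The genuinely hard part of the argument is already behind us: it lies in Lemmas \ref{lem:L1 is sectorial}, \ref{lem:properties of L2}, \ref{lem:analytic semigroup of L0} and \ref{lem:Spectrum of T}, which together establish that $0$ is a semisimple isolated point of $\sigma(\mathcal{L}_0^\epsilon)$ and that $\sigma(\mathcal{L}_0^\epsilon)\cap\dot{\imath}\setR=\{0\}$, hence (Corollary \ref{lem:analytic semigroup of L0 continued}) that $e^{T\mathcal{L}_0^\epsilon}-I$ is an isomorphism of $R(\mathcal{L}_0^\epsilon)$; this is exactly what makes $D_{(\varphi_0,\vartheta_0,\gamma)}F_\epsilon(0,0,0,0)$ invertible once the kernel direction is compensated by $\gamma$ and by the constraint $\vartheta_0(0)=0$. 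In the proof of the theorem proper the only points requiring attention are bookkeeping ones — choosing $B$ small enough that all of $X_0$, $U_\epsilon$, $V_\epsilon$ are respected along the curve $\lambda\mapsto(\varphi_0(\lambda),\vartheta_0(\lambda),\gamma(\lambda))$, and verifying that the chain of reformulations (spherical coordinates, multiplication by $M(m)$, and the two rescalings) is reversible — neither of which poses a real obstacle.
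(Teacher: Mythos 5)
Your argument is correct and coincides with the paper's intended proof: the theorem follows by applying the implicit function theorem to $F_\epsilon$ at $(0,0,0,0)$, using $F_\epsilon(0,0,0,0)=(0,0,0)$ and the invertibility of $D_{(\varphi_0,\vartheta_0,\gamma)}F_\epsilon(0,0,0,0)$ from the preceding lemma, then translating the fixed-point condition into $T$-periodicity and passing back through spherical coordinates and the rescalings. Your remarks about the normalisation $\vartheta_0(\lambda)(0)=0$ compensating the kernel direction and about the invertibility of $M(m)$ match the paper's reasoning.
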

\noindent {\bf Remarks.} %{\itshape A similar result is true for Bloch walls in thick layers. We plan to present this in a future paper.}
\begin{enumerate}
\item[(i)] {\itshape The ``correction term'' $\gamma(\lambda)$ in $h_\text{ext} = \lambda h + \gamma(\lambda)$ corresponds to a compatibility condition.}
\item[(ii)] {\itshape A similar result is true for Bloch walls in thick layers. We plan to present this in a future paper.}
\end{enumerate}

\medskip

\noindent {\bf Acknowledgments.}
This work is part of the author's PhD thesis prepared at the Max Planck Institute for Mathematics in the Sciences (MPIMiS) and submitted in June 2009 at the University of Leipzig, Germany. The author would like to thank his supervisor Stefan M{\"u}ller for the opportunity to work at MPIMiS and for having chosen an interesting problem to work on. Furthermore, the author would like to thank Helmut Abels for helpful discussions and hints on the subject. Financial support from the International Max 
Planck Research School `Mathematics in the Sciences' (IMPRS) is also acknowledged.

\bibliographystyle{abbrv}
\bibliography{article-NeelWall}

\bigskip\small

\noindent{\sc NWF I-Mathematik, Universit\"at Regensburg,  93040 Regensburg}\\
{\it E-mail address}: {\tt alexander2.huber@mathematik.uni-regensburg.de}

\end{document}